\newtheorem{theorem}{Theorem}[section]
\newtheorem{lemma}[theorem]{Lemma}
\theoremstyle{definition}
 \theoremstyle{remark}
\newtheorem{corollary}[theorem]{Corollary}
 \numberwithin{equation}{section}
\begin{document}

\title[Sharp Hardy-Adams  inequalities for bi-Laplacian on hyperbolic space]{ Sharp Hardy-Adams  inequalities for bi-Laplacian on hyperbolic space of dimension four}

\author{Guozhen Lu}
\address{Department of Mathematics,   University of Connecticut, CT 06269, USA}

\email{guozhen.lu@uconn.edu}

\author{Qiaohua  Yang}
\address{School of Mathematics and Statistics, Wuhan University, Wuhan, 430072, People's Republic of China}

\email{qhyang.math@gmail.com}

\thanks{The first author's research  was supported by  a US NSF grant  and a Simons Fellowship from the Simons Foundation. The second author's research was supported by   the National Natural
Science Foundation of China (No.11201346).  }


\subjclass[2000]{Primary  35J20; 46E35}



\keywords{Hardy inequalities; Adams' inequalities; Hyperbolic space; Sharp constant; Fourier transform and Plancherel formula on hyperbolic spaces,  Fractional Laplacians, Paneitz and GJMS operators,  Harish-Chandra $\mathfrak{c}$-function.}

\begin{abstract}
We establish sharp Hardy-Adams  inequalities on hyperbolic space
$\mathbb{B}^{4}$ of dimension four. Namely,  we will show that for any $\alpha>0$ there exists a constant $C_{\alpha}>0$
such that
\[
\int_{\mathbb{B}^{4}}(e^{32\pi^{2} u^{2}}-1-32\pi^{2}
u^{2})dV=16\int_{\mathbb{B}^{4}}\frac{e^{32\pi^{2}
u^{2}}-1-32\pi^{2} u^{2}}{(1-|x|^{2})^{4}}dx\leq C_{\alpha}.
\]
for any $u\in C^{\infty}_{0}(\mathbb{B}^{4})$ with
\[
\int_{\mathbb{B}^{4}}\left(-\Delta_{\mathbb{H}}-\frac{9}{4}\right)(-\Delta_{\mathbb{H}}+\alpha)u\cdot
udV\leq1.
\]
 As   applications, we obtain a sharpened  Adams
inequality on hyperbolic space $\mathbb{B}^{4}$  and  an inequality  which improves the classical Adams' inequality and
the  Hardy inequality simultaneously. The later inequality is in the spirit of the
Hardy-Trudinger-Moser inequality on a disk in dimension two given by Wang and Ye \cite{wy} and on any convex planar domain by the authors
\cite{ly}.

The tools of fractional Laplacian, Fourier transform and the Plancherel formula  on hyperbolic and symmetric spaces play an important role  in our work.

\end{abstract}

\maketitle


\section{Introduction}

Our main purpose of this article is to establish   sharp Hardy-Adams  inequalities on hyperbolic space in dimension four
$\mathbb{B}^{4}$.

We first recall the classical Trudinger-Moser inequality in any finite domain of Euclidean spaces.
Let $\Omega\subset \mathbb{R}^n (n\ge2)$ be a bounded domain and  $1\le q\le\frac{np}{n-kp}$. Then   it is well known that the Sobolev embedding theorem tells us  the embedding $W^{k,p}_0(\Omega)\subset L^q(\Omega)$ is continuous when $kp<n$. However,   in general $W^{1, n}_0(\Omega)\nsubseteq L^{\infty}(\Omega)$.   Trudinger \cite{t} established  in the borderline case  that $W^{1, n}_0(\Omega)\subset L_{\varphi_n}(\Omega)$,  where $L_{\varphi_n}(\Omega)$ is the Orlicz space associated with the Young function $\varphi_n(t)=\exp(\beta|t|^{n/n-1})-1$ for some $\beta>0$ (see also Yudovich \cite{Yu}, Pohozaev \cite{Po}).  In 1971, Moser  sharpened  the Trudinger inequality  in   \cite{mo} by finding the optimal $\beta$:\\
\\
\begin{theorem}\label{theorem1.1}[Trudinger-Moser] Let $\Omega$ be a domain with finite measure in Euclidean n-space $\mathbb{R}^n$, $n\ge2$. Then there exists a sharp constant $\beta_{n}=n\left(
\frac{n\pi^{\frac{n}{2}}}{\Gamma(\frac{n}{2}+1)}\right)  ^{\frac{1}{n-1}}%
$ such that
\begin{displaymath}
\frac{1}{|\Omega|}\int_{\Omega}\exp(\beta|u|^{\frac{n}{n-1}})dx\le c_0
\end{displaymath}
for any $\beta\le\beta_n$, any $u\in W^{1,n}_0(\Omega)$ with $\int_{\Omega}|\nabla u|^ndx\le1$. This constant $\beta_n$ is sharp in the sense that if $\beta>\beta_n$, then the above inequality can no longer hold with some $c_0$ independent of $u$.
\end{theorem}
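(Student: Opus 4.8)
The plan is to run Moser's symmetrization argument, which reduces the $n$-dimensional inequality to a one-dimensional calculus lemma, and then to test against a concentrating family to see that $\beta_{n}$ cannot be enlarged.

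\textbf{Reduction to the unit ball and to the half-line.} Given $u\in W^{1,n}_{0}(\Omega)$, replace $u$ by the symmetric decreasing rearrangement $u^{*}$ of $|u|$ on the ball $\Omega^{*}=B_{R}$ with $|B_{R}|=|\Omega|$: by equimeasurability $\int_{\Omega}\exp(\beta|u|^{n/(n-1)})\,dx=\int_{B_{R}}\exp(\beta|u^{*}|^{n/(n-1)})\,dx$, and by the P\'olya--Szeg\H{o} inequality $\int_{B_{R}}|\nabla u^{*}|^{n}\,dx\le\int_{\Omega}|\nabla u|^{n}\,dx\le1$. Since the Dirichlet integral $\int|\nabla\cdot|^{n}$ is invariant under the dilation $x\mapsto x/R$ in dimension $n$ and the left-hand side carries the factor $|\Omega|^{-1}$, I may assume $\Omega=B_{1}$ and $u=u(|x|)\ge0$ radially nonincreasing with $u(1)=0$. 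Writing $\omega_{n-1}=|\mathbb{S}^{n-1}|=n\pi^{n/2}/\Gamma(n/2+1)$ (so that $\beta_{n}=n\,\omega_{n-1}^{1/(n-1)}$ and $|B_{1}|=\omega_{n-1}/n$), substitute $|x|=e^{-t/n}$, $t\in[0,\infty)$, and set $\varphi(t)=(\omega_{n-1}n^{n-1})^{1/n}\,u(e^{-t/n})$. A direct change of variables in $\int_{B_{1}}|\nabla u|^{n}\,dx=\omega_{n-1}\int_{0}^{1}|u'(r)|^{n}r^{n-1}\,dr$ yields $\varphi(0)=0$, $\int_{0}^{\infty}|\dot\varphi|^{n}\,dt=\int_{B_{1}}|\nabla u|^{n}\,dx\le1$, and, since $\beta_{n}(\omega_{n-1}n^{n-1})^{-1/(n-1)}=1$,
\begin{equation*}
\frac{1}{|B_{1}|}\int_{B_{1}}\exp\!\big(\beta_{n}|u|^{n/(n-1)}\big)\,dx=\int_{0}^{\infty}\exp\!\big(\varphi(t)^{n/(n-1)}-t\big)\,dt .
\end{equation*}
As $\exp(\beta|u|^{n/(n-1)})\le\exp(\beta_{n}|u|^{n/(n-1)})$ when $\beta\le\beta_{n}$, it suffices to bound the right-hand integral by some $c_{0}=c_{0}(n)$.

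\textbf{Moser's one-dimensional lemma.} The claim is now: if $\varphi:[0,\infty)\to[0,\infty)$ is absolutely continuous with $\varphi(0)=0$ and $\int_{0}^{\infty}|\dot\varphi|^{n}\,dt\le1$, then $\int_{0}^{\infty}\exp(\varphi(t)^{n/(n-1)}-t)\,dt\le c_{0}(n)$. The input is the pointwise H\"older bound $\varphi(t)\le\big(\int_{0}^{t}|\dot\varphi|^{n}\big)^{1/n}t^{(n-1)/n}\le t^{(n-1)/n}$, which forces the exponent to be $\le0$ everywhere but by itself only gives a divergent bound. Setting $g(t)=\int_{0}^{t}|\dot\varphi|^{n}\,ds$ one sharpens this to $\varphi(t)^{n/(n-1)}-t\le\big(g(t)^{1/(n-1)}-1\big)t$, so wherever $g(t)$ stays below a fixed threshold $<1$ the exponent decays linearly in $t$. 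I would split $[0,\infty)$ at the first time the Dirichlet mass $g$ reaches $1/2$: on the initial interval the bound above gives a contribution $\le\int_{0}^{\infty}e^{-\kappa(n)t}\,dt$, and on the remaining interval one closes the estimate using that at most half of the unit mass is left, together with the convexity of $s\mapsto s^{n/(n-1)}$ --- equivalently, one proves the super-level set estimate $\big|\{t:\varphi(t)^{n/(n-1)}-t>-s\}\big|\le s+C(n)$ and integrates via $\int_{0}^{\infty}e^{\varphi^{n/(n-1)}-t}\,dt=\int_{0}^{\infty}\big|\{\varphi^{n/(n-1)}-t>-s\}\big|e^{-s}\,ds$. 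This is Moser's argument and produces the required $c_{0}(n)$.

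\textbf{Sharpness of $\beta_{n}$.} Test the inequality on the concentrating family on $B_{1}$, $u_{k}(x)=(\omega_{n-1}\log k)^{-1/n}\min\{\log k,\log(1/|x|)\}$ for $|x|\le1$, extended by $0$. Then $\int_{B_{1}}|\nabla u_{k}|^{n}\,dx=1$, while $u_{k}\equiv(\omega_{n-1}\log k)^{-1/n}\log k$ on $B_{1/k}$, so for $\beta>\beta_{n}$
\begin{equation*}
\frac{1}{|B_{1}|}\int_{B_{1}}\exp\!\big(\beta|u_{k}|^{n/(n-1)}\big)\,dx\ \ge\ \frac{|B_{1/k}|}{|B_{1}|}\,\exp\!\big(\beta\,\omega_{n-1}^{-1/(n-1)}\log k\big)=k^{\,n(\beta/\beta_{n}-1)},
\end{equation*}
which tends to $\infty$ as $k\to\infty$. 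Hence no $c_{0}$ works once $\beta>\beta_{n}$.

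\textbf{Main obstacle.} The delicate point is the one-dimensional lemma: it becomes false if the exponent $n/(n-1)$ is replaced by any $p>n/(n-1)$, so the argument cannot afford to lose anything in the exponent and must be carried out with the exact exponent. This is exactly where the sharp constant $\beta_{n}$ is forced, and it is the technical heart of the theorem; the symmetrization, the change of variables, and the sharpness example are comparatively routine.
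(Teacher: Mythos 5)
The paper does not prove Theorem~\ref{theorem1.1}; it is quoted as Moser's classical result with a citation to \cite{mo}, so there is no proof to compare against. Your sketch is a faithful reconstruction of Moser's original argument: symmetric decreasing rearrangement plus P\'olya--Szeg\H{o} to pass to a radial nonincreasing function on a ball, the scale invariance of the $n$-Dirichlet energy to normalize to $B_1$, the exponential change of variables $|x|=e^{-t/n}$ with $\varphi=(\omega_{n-1}n^{n-1})^{1/n}u$ that converts the problem into bounding $\int_0^\infty e^{\varphi^{n/(n-1)}-t}\,dt$ under $\varphi(0)=0$, $\int_0^\infty|\dot\varphi|^n\le1$, and the truncated-logarithm family for sharpness. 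I checked the constant bookkeeping ($\beta_n(\omega_{n-1}n^{n-1})^{-1/(n-1)}=1$, $|B_1|=\omega_{n-1}/n$, the energy normalization of $u_k$, and the growth rate $k^{n(\beta/\beta_n-1)}$) and it is all correct. The one-dimensional lemma is the technical heart and you only sketch it, but you correctly identify the crucial level-set estimate $|\{t:\varphi(t)^{n/(n-1)}-t>-s\}|\le s+C(n)$ (equivalently, $|E_\lambda|\le A|\lambda|+B$ in Moser's notation) as what closes the argument, plus the lower bound $\inf F\ge -c$; incidentally, this is exactly the structure reused in the paper's Lemma~\ref{lm5.1} for the Hardy--Adams setting, so your outline is consistent with the paper's own methodology even though the paper never proves Theorem~\ref{theorem1.1} itself.
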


In 1988, D. Adams extended such an  inequality to
high order Sobolev spaces. In fact, Adams proved the following
theorem:
\begin{theorem}\label{theorem1.2}
Let $\Omega$ be a domain in $\mathbb{R}^{n}$ with finite $n$-measure
and $m$ be a positive integer less than $n$. There is a constant
$c_{0}=c_{0}(m,n)$ such that for all $u\in C^{m}(\mathbb{R}^{n})$
with support contained in $\Omega$ and $\|\nabla^{m}u\|_{n/m}\leq
1$,
 the following uniform
inequality holds
\begin{equation}\label{1.2}
\frac{1}{|\Omega|}\int_{\Omega}\exp(\beta_{0}(m,n)|u|^{n/(n-m)})dx\leq
c_{0},
\end{equation}
where
\begin{equation}\label{1.3}
\beta_{0}(m,n)=\left\{
                 \begin{array}{ll}
                   \frac{n}{\omega_{n-1}}\left[\frac{\pi^{n/2}2^{m}\Gamma((m+1)/2)}{\Gamma((n-m+1)/2)}\right]^{n/(n-m)}, & \hbox{$m$ = odd;} \\
                   \\
                    \frac{n}{\omega_{n-1}}\left[\frac{\pi^{n/2}2^{m}\Gamma(m/2)}{\Gamma((n-m)/2)}\right]^{n/(n-m)}, & \hbox{$m$ = even,}
                 \end{array}
               \right.
\end{equation}
where
$\omega_{n-1}$ is the surface measure of the unite sphere in
$\mathbb{R}^{n}$.

 Furthermore, the constant $\beta_{0}(m,n)$ in (\ref{1.2}) is sharp in
the sense that if $\beta_{0}(m,n)$ is replaced by any larger number,
then the integral in (\ref{1.2}) cannot be bounded uniformly by any
 constant.
\end{theorem}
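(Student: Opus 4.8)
The plan is to follow Adams' original strategy: represent $u$ by a Riesz potential of its top-order derivative, pass to the decreasing rearrangement, reduce everything to a one-dimensional convolution inequality, and then apply a sharp exponential lemma; sharpness is obtained by a concentrating (Moser-type) family.

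\emph{Representation.} For $u\in C^{m}(\mathbb{R}^{n})$ with $\mathrm{supp}\,u\subset\Omega$ one writes $u$ as a Riesz potential of its relevant top-order derivative. If $m=2k$ is even, then $u(x)=\gamma(m,n)^{-1}\int_{\mathbb{R}^{n}}|x-y|^{m-n}\Delta^{k}u(y)\,dy$ with $\gamma(m,n)=\pi^{n/2}2^{m}\Gamma(m/2)/\Gamma((n-m)/2)$, so $|u(x)|\le\gamma(m,n)^{-1}(I_{m}g)(x)$ with $g=|\Delta^{k}u|$ and $I_{m}g:=|x|^{m-n}*g$; if $m=2k+1$ is odd, one integrates the fundamental solution of $\Delta^{k+1}$ once more and obtains $u(x)=c\int_{\mathbb{R}^{n}}(x-y)\cdot\nabla\Delta^{k}u(y)\,|x-y|^{m-n-1}\,dy$, hence $|u(x)|\le\gamma(m,n)^{-1}(I_{m}g)(x)$ with $g=|\nabla\Delta^{k}u|$ and $\gamma(m,n)=\pi^{n/2}2^{m}\Gamma((m+1)/2)/\Gamma((n-m+1)/2)$. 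In both cases $\|g\|_{n/m}\le1$ (with $\nabla^{m}u$ understood as $\Delta^{m/2}u$ for $m$ even and $\nabla\Delta^{(m-1)/2}u$ for $m$ odd, so that $g=|\nabla^{m}u|$), and the two values of $\gamma(m,n)$ are exactly the brackets in (\ref{1.3}); in fact $\beta_{0}(m,n)=\tfrac{n}{\omega_{n-1}}\gamma(m,n)^{n/(n-m)}$, which is how (\ref{1.3}) should be read. This is the only place where the parity of $m$ intervenes, so it remains to show: for $g\ge0$ supported in $\Omega$ with $\|g\|_{n/m}\le1$,
\[
\frac{1}{|\Omega|}\int_{\Omega}\exp\!\Big(\tfrac{n}{\omega_{n-1}}\big(I_{m}g\big)^{n/(n-m)}\Big)dx\le c_{0}.
\]

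\emph{Rearrangement and reduction to one variable.} By the scale invariance of both sides we may assume $|\Omega|=\omega_{n-1}/n$. Let $g^{*}$ be the decreasing rearrangement of $g$. By O'Neil's inequality for rearrangements of convolutions together with $\big(|x|^{m-n}\big)^{*}(t)=\big(\omega_{n-1}/(nt)\big)^{(n-m)/n}$,
\[
(I_{m}g)^{*}(t)\le\Big(\tfrac{\omega_{n-1}}{n}\Big)^{\!\frac{n-m}{n}}\Big[\tfrac{n}{m}\,t^{\frac{m}{n}-1}\!\int_{0}^{t}g^{*}(s)\,ds+\int_{t}^{\infty}s^{\frac{m}{n}-1}g^{*}(s)\,ds\Big].
\]
Set $p=n/m$, $p'=n/(n-m)$, substitute $t=\tfrac{\omega_{n-1}}{n}e^{-\xi}$, $s=\tfrac{\omega_{n-1}}{n}e^{-\sigma}$, and put $\phi(\sigma)=(\omega_{n-1}/n)^{m/n}e^{-\sigma m/n}g^{*}(\tfrac{\omega_{n-1}}{n}e^{-\sigma})$ and $w(\xi)=(n/\omega_{n-1})^{1/p'}(I_{m}g)^{*}(\tfrac{\omega_{n-1}}{n}e^{-\xi})$. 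A direct computation gives $\int_{0}^{\infty}\phi(\sigma)^{p}\,d\sigma=\|g\|_{n/m}^{n/m}\le1$, the fact that the quantity to be bounded equals $\int_{0}^{\infty}\exp\!\big(w(\xi)^{p'}-\xi\big)\,d\xi$, and
\[
w(\xi)\le\int_{0}^{\infty}a(\sigma,\xi)\phi(\sigma)\,d\sigma,\qquad a(\sigma,\xi)=\begin{cases}1,&0<\sigma\le\xi,\\ \tfrac{n}{m}\,e^{-(\sigma-\xi)(n-m)/n},&\sigma>\xi.\end{cases}
\]

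\emph{Adams' exponential lemma.} The kernel $a$ satisfies $a(\sigma,\xi)\le1$ for $\sigma\le\xi$ and $\sup_{\xi>0}\big(\int_{\xi}^{\infty}a(\sigma,\xi)^{p'}\,d\sigma\big)^{1/p'}=n/m<\infty$. The analytic heart of the proof is then the lemma of Adams: if $a\ge0$ on $[0,\infty)^{2}$ obeys these two conditions with the supremum equal to a finite $b$, then there is $c_{0}=c_{0}(p,b)$ such that $\int_{0}^{\infty}e^{-F(\xi)}\,d\xi\le c_{0}$ for every $\phi\ge0$ with $\|\phi\|_{p}\le1$, where $F(\xi)=\xi-\big(\int_{0}^{\infty}a(\sigma,\xi)\phi(\sigma)\,d\sigma\big)^{p'}$. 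One proves it by splitting $(0,\infty)$ according to whether $\int_{0}^{\xi}\phi^{p}$ is $\ge1-\varepsilon$ or not: on the second set Hölder's inequality gives $\big(\int a\phi\big)^{p'}\le(1-\delta)\xi+C$, so $e^{-F}$ decays exponentially; on the first, ``bad'', set one runs a Moser / Garsia--Rodemich--Rumsey iteration over the super-level sets of $\xi\mapsto w(\xi)$ to show $F(\xi)\ge-C$ there, and summing over the levels gives a finite contribution. The constant $1$ on the diagonal of $a$ --- equivalently the value $\beta_{0}(m,n)=\tfrac{n}{\omega_{n-1}}\gamma(m,n)^{n/(n-m)}$ --- is precisely what keeps the bad set of finite $e^{-F}$-mass; any larger constant destroys the summation. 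Applying the lemma yields $\int_{0}^{\infty}\exp(w(\xi)^{p'}-\xi)\,d\xi\le c_{0}$, i.e.\ (\ref{1.2}).

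\emph{Sharpness, and the main obstacle.} To see that $\beta_{0}(m,n)$ is optimal, take $\Omega=B_{1}$ and, for $\ell\to\infty$, the concentrating family built from $g_{\ell}=(\omega_{n-1}\ell/n)^{-m/n}|x|^{-m}\chi_{\{e^{-\ell/n}<|x|<1\}}$ (so $\|g_{\ell}\|_{n/m}=1$): let $u_{\ell}$ be the radial function with $\gamma(m,n)^{-1}I_{m}g_{\ell}$ as its profile, adjusted by a lower-order term so that $u_{\ell}\in C^{m}_{0}(B_{1})$ and $\|\nabla^{m}u_{\ell}\|_{n/m}\to1$; then $u_{\ell}$ is essentially constant on $B_{e^{-\ell/n}}$ with $\beta_{0}(m,n)\,u_{\ell}(0)^{n/(n-m)}=\ell+o(\ell)$, and hence for $\beta>\beta_{0}(m,n)$, $\frac{1}{|B_{1}|}\int_{B_{1}}\exp(\beta|u_{\ell}|^{n/(n-m)})\,dx\ge c\,e^{(\beta/\beta_{0}(m,n)-1)\ell}\to\infty$. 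The main obstacle is the one-dimensional lemma of the previous paragraph, whose level-set iteration is where the sharp exponent is genuinely won; a secondary (but necessary) point is the verification for $m\ge2$ that the correction making $u_{\ell}$ compactly supported costs only $o(1)$ in the $\|\nabla^{m}\cdot\|_{n/m}$ norm, and, in the representation step, the careful bookkeeping of the constants $\gamma(m,n)$ that produces the even/odd dichotomy in (\ref{1.3}).
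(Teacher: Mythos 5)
This is Adams' 1988 theorem; the paper does not prove it but cites it (reference \cite{ad}) as background. Your outline is a faithful reconstruction of Adams' original argument, and the constants $\gamma(m,n)$, the rearrangement of $|x|^{m-n}$, the exponential change of variables, and the kernel $a(\sigma,\xi)$ with $b=n/m$ all check out. One small imprecision worth flagging: you describe the proof of the one-dimensional exponential lemma as a ``Moser / Garsia--Rodemich--Rumsey iteration over super-level sets,'' but Adams' actual argument (which this paper in fact reproduces in its own Lemma \ref{lm5.1} in a modified form) is a direct measure estimate: one writes $\int_{0}^{\infty}e^{-F}=\int_{-\infty}^{\infty}|E_{\lambda}|e^{-\lambda}\,d\lambda$ with $E_{\lambda}=\{t\geq 0:F(t)\leq\lambda\}$ and then proves (a) $\inf F\geq -c$ by a straightforward Cauchy--Schwarz bound on $\int a(\cdot,t)^{2}$, and (b) $|E_{\lambda}|\leq B_{1}|\lambda|+B_{2}$ by a two-point argument comparing $t_{1},t_{2}\in E_{\lambda}$; there is no level-set iteration. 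The diagonal value $a(\sigma,\xi)=1$ for $\sigma\leq\xi$ is indeed what makes both (a) and (b) work, so your heuristic that this is where sharpness is ``won'' is right, just attached to the wrong mechanism. The sharpness construction and the even/odd bookkeeping for $\gamma(m,n)$ are correct.
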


Note that $\beta(1, n)$ coincides with Moser's value of $\beta_{n}$.  We are particularly interested in the case $n = 4$ and $m = 2$ in this paper
where $\beta_0(2, 4) = 32\pi^{2}$.

 There have been many generalizations related to the Trudinger-Moser inequality on hyperbolic spaces and Riemannian manifolds (see e.g.,  \cite{f},   \cite{ks},  \cite{l1}, \cite{l2}, \cite{yxli1}, \cite{yxli2}, \cite{ly}, \cite{ms}, \cite{mst}, \cite{y}, \cite{ysk}). For instance,  Mancini and Sandeep \cite{ms} 
proved the following improved Trudinger-Moser inequalities on $\mathbb{B}=\{z=x+iy:|z|=\sqrt{x^{2}+y^{2}}<1\}$:
\[
\sup_{u\in W^{1,2}_{0}(\mathbb{B}),
\int_{\mathbb{B}}|\nabla
u|^{2}dxdy\leq1}\int_{\mathbb{B}}\frac{e^{4\pi
u^{2}}-1}{(1-|z|^{2})^{2}}dxdy<\infty.
\]
 Later,  Karmakar and  Sandeep \cite{ks} generalize this inequality to hyperbolic space $\mathbb{H}^{n}$ if $n$ is even.
 In \cite{l1, l2}, the first author and Tang established  sharp critical and subcritical  Trudinger-Moser  inequalities on the high dimensional hyperbolic spaces which are different from those in \cite{mst}.  The results have been generalized by  Ng\^o and Nguyen \cite{n}, among other results, for bi-Laplacian on hyperbolic spaces.

 Wang and Ye \cite{wy} proved, among other results, an improved  Trudinger-Moser inequality by
combining the Hardy inequality. Their result is the following
\begin{theorem}\label{theorem1.3}
There exists a constant $C>0$ such that
\[
\int_{\mathbb{B}}e^{\frac{4\pi u^{2}}{\|u\|_{\mathcal{H}}}}dxdy<C<\infty,\;\;\forall u\in C^{\infty}_{0}(\mathbb{B}),
\]
where $\|u\|_{\mathcal{H}}=\int_{\mathbb{B}}|\nabla u|^{2}dxdy-\int_{\mathbb{B}}\frac{u^{2}}{(1-|z|^{2})^{2}}dxdy$.
\end{theorem}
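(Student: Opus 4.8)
The plan is to reduce the inequality --- by symmetrization and a one-dimensional change of variables --- to a Moser-type lemma on the half-line. By homogeneity we may normalize $\|u\|_{\mathcal H}=1$ and must then bound $\int_{\mathbb B}e^{4\pi u^2}\,dxdy$. First pass to radial functions: regard $\mathbb B$ as the disk model of hyperbolic $2$-space with volume $dV_{\mathbb H}=4(1-|z|^2)^{-2}\,dxdy$, so that $\|u\|_{\mathcal H}=\int|\nabla_{\mathbb H}u|^2\,dV_{\mathbb H}-\tfrac14\int u^2\,dV_{\mathbb H}$ is the ``spectral gap'' energy attached to the bottom $\tfrac14$ of the $L^2$-spectrum of $-\Delta_{\mathbb H}$. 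Replacing $u$ by its decreasing rearrangement $u^\ast$ with respect to $dV_{\mathbb H}$ does not increase $\int|\nabla_{\mathbb H}u|^2\,dV_{\mathbb H}$ (the P\'olya--Szeg\H{o} inequality on $\mathbb H^2$) and preserves $\int u^2\,dV_{\mathbb H}$, so $\|u^\ast\|_{\mathcal H}\le1$, while $\int_{\mathbb B}e^{4\pi u^2}\,dxdy=\int_{\mathbb B}e^{4\pi u^2}\cdot\tfrac14(1-|z|^2)^2\,dV_{\mathbb H}$ does not decrease, since the weight $\tfrac14(1-|z|^2)^2$ is radially decreasing (Hardy--Littlewood; equivalently, centered balls maximize Lebesgue measure for given hyperbolic volume). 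Hence it suffices to prove $\int_{\mathbb B}e^{4\pi u^2}\,dxdy\le C$ for nonnegative radial $u=u(|z|)$ with $\|u\|_{\mathcal H}\le1$ that vanish near $|z|=1$.

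For such $u$ put $s=\log(1/|z|)$ and $\phi(s)=u(e^{-s})$, so that $\phi$ is bounded and $\phi\equiv0$ near $s=0$. Then $\int_{\mathbb B}|\nabla u|^2\,dxdy=2\pi\int_0^\infty\dot\phi^2\,ds$, $\int_{\mathbb B}(1-|z|^2)^{-2}u^2\,dxdy=2\pi\int_0^\infty(4\sinh^2 s)^{-1}\phi^2\,ds$, and $\int_{\mathbb B}e^{4\pi u^2}\,dxdy=2\pi\int_0^\infty e^{4\pi\phi^2-2s}\,ds$. Setting $v=\sqrt{2\pi}\,\phi$, the claim reduces to the one-dimensional statement: if $v(0)=0$, $v$ is bounded, and $\int_0^\infty\dot v^2\,ds-\int_0^\infty(4\sinh^2 s)^{-1}v^2\,ds\le1$, then $\int_0^\infty e^{2v^2-2s}\,ds\le C$.

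To prove this I would use a ground-state substitution. Let $\Psi>0$ solve $\ddot\Psi+(4\sinh^2 s)^{-1}\Psi=0$ on $(0,\infty)$ with $\Psi(s)\sim s^{1/2}$ at $0$ --- explicitly $\Psi=(\sinh s)^{1/2}\varphi_0(s)$ with $\varphi_0$ the elementary spherical function of $\mathbb H^2$, so $\Psi(s)\sim cs$ at $\infty$ --- and write $v=\Psi w$. Integration by parts (boundary terms vanish because $v\equiv0$ near $0$ and $w\to0$ at $\infty$) gives $\int_0^\infty\dot v^2-\int_0^\infty(4\sinh^2 s)^{-1}v^2=\int_0^\infty\Psi^2\dot w^2\,ds\le1$. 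Reparametrizing by $y=\int_s^\infty\Psi(\sigma)^{-2}\,d\sigma$ (finite, as $\Psi$ grows linearly) flattens this to $\int_0^\infty W'(y)^2\,dy\le1$, $W(y):=w(s(y))$, $W(0)=0$, and turns the exponential integral into $\int_0^\infty e^{2P(y)W(y)^2-2Q(y)}P(y)\,dy$ with $P(y)=\Psi(s(y))^2$, $Q(y)=s(y)$. The decisive identity is $P(y)\,y=\Psi(s)^2\int_s^\infty\Psi^{-2}=\Psi(s)\chi(s)$, where $\chi(s)=\Psi(s)\int_s^\infty\Psi^{-2}$ is the companion solution of the ODE; the function $F(s):=\Psi(s)\chi(s)-s$ has finite limits at $0$ and at $\infty$ (indicial/asymptotic analysis), hence is bounded, so the exponent equals $2P(y)\bigl(W(y)^2-y\bigr)+2F(s(y))$. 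Since $W(0)=0$ and $\int W'^2\le1$ force $W(y)^2\le y$, on the region $\{W^2\le y/2\}$ the integrand is $\le e^{O(1)}e^{-Q(y)}P(y)$, which integrates to a universal constant (as $y\to0$, $Q\sim y^{-1}$ and $P\sim y^{-2}$, so $e^{-Q}P\to0$ quickly; as $y\to\infty$, $P$ decays exponentially); on the complementary region, where $\int_0^yW'^2>\tfrac12$, one renormalizes $W$ past that point.

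The main obstacle is exactly this complementary region, where $W$ is of size $\sqrt y$: there the bound $W^2\le y$ is far too crude (it would leave the divergent prefactor $\int_0^\infty P(y)\,dy=\infty$), and one must feed the renormalized $W$ --- together with the identity $P(y)y=Q(y)+F(s(y))$ --- into Moser's classical one-dimensional lemma \cite{mo}, after a further reparametrization chosen so that the weight $P(y)\,dy$ and the subtracted term $-2Q(y)$ assemble precisely into Moser's normalized form $\int e^{\varphi^2-t}\,dt$, $\int\dot\varphi^2\le1$. This matching is what pins the exponential constant down to $4\pi=\beta_0(1,2)$: with any strictly larger constant the quantity $2P(y)y-2Q(y)=2F(s(y))$ would cease to be bounded as $y\to0$ and the inequality would break. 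Finally, the boundedness of $u$ is used only to guarantee $W(0)=0$ (equivalently $v=o(s)$ at $\infty$); without it the one-dimensional inequality is already false, as a $w$ with a long ``plateau'' shows.
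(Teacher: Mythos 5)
The paper does not prove this statement: Theorem \ref{theorem1.3} is quoted from Wang and Ye \cite{wy}, and the text only remarks that their argument ``depends on Schwartz rearrangement.'' Your plan is consistent with that remark, and the reductions you carry out are sound: the homogeneity normalization; hyperbolic Schwarz symmetrization via P\'olya--Szeg\H o for $\int|\nabla_{\mathbb H}u|^2\,dV_{\mathbb H}$, preservation of $\int u^2\,dV_{\mathbb H}$, and Hardy--Littlewood for the target integral (using that $(1-|z|^2)^2/4$ is radially decreasing); the substitution $s=\log(1/|z|)$, which correctly produces $2\pi\int\dot\phi^2$, $2\pi\int\phi^2/(4\sinh^2s)$, and $2\pi\int e^{4\pi\phi^2-2s}\,ds$; the ground-state substitution $v=\Psi w$ with the stated boundary terms; the change of variable $y=\int_s^\infty\Psi^{-2}$ giving $\int W'^2\,dy\le1$ and $W(0)=0$; and the identity $P(y)y-Q(y)=F(s(y))$ with $F$ bounded (Wronskian $\Psi\dot\chi-\chi\dot\Psi=-1$, asymptotics $\Psi\sim s^{1/2}$ at $0$, $\Psi\sim cs$ at $\infty$). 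The estimate on $\{W^2\le y/2\}$ is also correct: $e^{2P(W^2-y)+2F}P\le e^{F}e^{-Q}P$ and $\int_0^\infty e^{-Q(y)}P(y)\,dy=\int_0^\infty e^{-s}\,ds=1$.

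The genuine gap is the ``complementary region.'' You propose ``a further reparametrization chosen so that the weight $P(y)\,dy$ and the subtracted term $-2Q(y)$ assemble precisely into Moser's normalized form $\int e^{\varphi^2-t}\,dt$, $\int\dot\varphi^2\le1$.'' No such reparametrization exists in the naive sense: the substitution $t=2Q(y)=2s$ that turns $e^{-2Q}P\,dy$ into $\tfrac12 e^{-t}\,dt$ simply \emph{undoes} the $y$-change of variable and returns you to $\int e^{2v^2-2s}\,ds$ with the original energy $\int\dot v^2\,ds$, which is \emph{not} bounded by $1$ --- removing the subtracted Hardy term was the whole point of the ground-state substitution, and a pure reparametrization cannot put it back. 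Quantitatively, writing $T(y)=\int_0^yW'^2\le1$, the Cauchy--Schwarz bound $W^2\le yT(y)$ gives on this region only $e^{2P(W^2-y)+2F}P\lesssim e^{-2Q(y)(1-T(y))}P(y)$, i.e. (after passing to $s$) $\int_0^\infty e^{-2s(1-\tilde T(s))}\,ds$ with $\tilde T$ non-increasing, $\tilde T(0)\le1$, $\tilde T(\infty)=0$; taking $\tilde T\approx1$ on $[0,M]$ shows this is not uniformly bounded. So the Cauchy--Schwarz bound is too weak here and a genuine Moser-type level-set argument is required --- precisely the part you leave unproved, and precisely where the real work of Wang--Ye's proof lies. (A secondary point: the paper's own fourth-order analogue, Theorem \ref{th1.7}, is proved by a \emph{rearrangement-free} route --- Green's function estimates and Adams' level-set lemma --- because P\'olya--Szeg\H o does not control $\int|\Delta u|^2$; so the symmetrization strategy you use for the two-dimensional, first-order case does not extend to the paper's main results.)
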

We note that the  proof of Theorem \ref{theorem1.3} in \cite{wy} depends
on Schwartz rearrangement argument. In the same paper, they conjecture that such Hardy-Trudinger-Moser  inequality holds for bounded and convex domains with smooth boundary.
Using Theorem \ref{theorem1.3},  Mancini,  Sandeep and Tintarev \cite{mst} proved, among other results, the following modified Trudinger-Moser  inequality on $\mathbb{B}$
and their proof also depends on rearrangement inequalities.
\begin{theorem}\label{theorem1.4}
There exists a constant $C$ such that for all
$u\in C^{\infty}_{0}(\mathbb{B})$ with
\[
\|u\|_{\mathcal{H}}=\int_{\mathbb{B}}|\nabla u|^{2}dxdy-\int_{\mathbb{B}}\frac{u^{2}}{(1-|z|^{2})^{2}}dxdy\leq1,
\]
there holds
\[
\int_{\mathbb{B}}\frac{(e^{4\pi u^{2}}-1-4\pi u^{2})}{(1-|x|^{2})^{2}}dx\leq C.
\]
\end{theorem}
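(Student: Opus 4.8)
The strategy is to reduce Theorem~\ref{theorem1.4} to the critical‑exponent estimate of Theorem~\ref{theorem1.3} by separating the behaviour of $u$ in a fixed ball about the centre of $\mathbb{B}$ from its behaviour near the ideal boundary. It is convenient to pass to the hyperbolic picture: writing the hyperbolic area element on $\mathbb{B}$ as $dV=\frac{4\,dxdy}{(1-|z|^{2})^{2}}$, one has $\|u\|_{\mathcal{H}}=\int_{\mathbb{B}}|\nabla_{\mathbb{H}}u|^{2}\,dV-\tfrac14\int_{\mathbb{B}}u^{2}\,dV=\langle(-\Delta_{\mathbb{H}}-\tfrac14)u,u\rangle$, and the quantity to be bounded is $\tfrac14\int_{\mathbb{B}}(e^{4\pi u^{2}}-1-4\pi u^{2})\,dV$. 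Both are invariant under the decreasing hyperbolic rearrangement $u\mapsto u^{*}$, while the P\'olya--Szeg\H{o} principle on $\mathbb{H}^{2}$ gives $\|u^{*}\|_{\mathcal{H}}\le\|u\|_{\mathcal{H}}$; hence (by density) it suffices to prove the bound when $u=u(\rho)$ is radial and non‑increasing, $\rho$ denoting the geodesic distance to the centre. In these coordinates $\|u\|_{\mathcal{H}}=2\pi\int_{0}^{\infty}\big((u')^{2}-\tfrac14u^{2}\big)\sinh\rho\,d\rho$ and the target integral equals $2\pi\int_{0}^{\infty}(e^{4\pi u^{2}}-1-4\pi u^{2})\sinh\rho\,d\rho$.

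The key device for the region near the boundary is the ground‑state substitution $w(\rho)=e^{\rho/2}u(\rho)$. Since $(u')^{2}-\tfrac14u^{2}=e^{-\rho}\big((w')^{2}-w'w\big)$ and $e^{-\rho}\sinh\rho=\tfrac12(1-e^{-2\rho})$, one integration by parts yields the identity
\[
\frac{\|u\|_{\mathcal{H}}}{2\pi}=\frac12\int_{0}^{\infty}(1-e^{-2\rho})(w')^{2}\,d\rho+\frac12\int_{0}^{\infty}e^{-\rho}u^{2}\,d\rho ,
\]
in which both terms are non‑negative; in particular $\|\cdot\|_{\mathcal{H}}$ is positive definite, so we may assume $0<\|u\|_{\mathcal{H}}\le1$. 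Reading off the two bounds $\int_{0}^{\infty}e^{-\rho}u^{2}\,d\rho\le\tfrac1\pi$ and $\int_{1}^{\infty}(w')^{2}\,d\rho\le\frac{1}{\pi(1-e^{-2})}$, and using monotonicity in the form $u(1)^{2}(1-e^{-1})\le\int_{0}^{1}e^{-\rho}u^{2}\,d\rho$, we see that $|w(1)|$ is bounded by an absolute constant. Cauchy--Schwarz then gives $|w(\rho)|\le|w(1)|+\sqrt{\rho-1}\big(\int_{1}^{\rho}(w')^{2}\big)^{1/2}\le C_{0}(1+\sqrt{\rho})$ for $\rho\ge1$, so that
\[
u(\rho)^{2}=e^{-\rho}w(\rho)^{2}\le C_{0}^{2}(1+\sqrt{\rho})^{2}e^{-\rho},\qquad\rho\ge1,
\]
with $C_{0}$ independent of $u$. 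This is the crucial uniform pointwise decay: functions of bounded $\|\cdot\|_{\mathcal{H}}$‑energy cannot be simultaneously large and spread out.

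Now split the target integral at $\rho=1$. On the bulk $\{\rho\le1\}$, i.e.\ $\{|z|\le\tanh\tfrac12\}$, the weight $(1-|z|^{2})^{-2}$ is bounded, so $2\pi\int_{0}^{1}(e^{4\pi u^{2}}-1)\sinh\rho\,d\rho\le C\int_{\mathbb{B}}(e^{4\pi u^{2}}-1)\,dxdy$; since $\|u\|_{\mathcal{H}}\le1$ forces $e^{4\pi u^{2}}\le e^{4\pi u^{2}/\|u\|_{\mathcal{H}}}$ pointwise, Theorem~\ref{theorem1.3} bounds the right‑hand side by a universal constant. On the tail $\{\rho\ge1\}$, $u^{2}$ is uniformly bounded by the estimate above, so $e^{4\pi u^{2}}-1-4\pi u^{2}\le 8\pi^{2}u^{4}e^{4\pi u^{2}}\le C\,u^{4}$, whence
\[
2\pi\int_{1}^{\infty}(e^{4\pi u^{2}}-1-4\pi u^{2})\sinh\rho\,d\rho\le C\int_{1}^{\infty}(1+\sqrt{\rho})^{4}e^{-\rho}\,d\rho<\infty ,
\]
again a universal constant; here subtracting $1+4\pi u^{2}$ is exactly what replaces the possibly divergent $\int e^{4\pi u^{2}}\,dV$ over the infinite‑volume region by the convergent $\int u^{4}\,dV$. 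Adding the two contributions proves Theorem~\ref{theorem1.4}. The main obstacle is precisely the infinite volume of $(\mathbb{H}^{2},dV)$ together with the degeneracy of $\|\cdot\|_{\mathcal{H}}$ at the bottom of the spectrum, which is why neither Moser's truncation argument nor a naive H\"older splitting against Theorem~\ref{theorem1.3} closes at the borderline $\|u\|_{\mathcal{H}}=1$; the substitution $w=e^{\rho/2}u$, which exhibits $\|\cdot\|_{\mathcal{H}}$ as the manifestly positive weighted energy above, is what makes the tail analysis quantitative and uniform.
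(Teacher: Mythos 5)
The paper does not supply its own proof of Theorem~\ref{theorem1.4}; it quotes the result from Mancini--Sandeep--Tintarev~\cite{mst} and merely remarks that the argument there is rearrangement-based. Your proof is correct modulo the two black-box ingredients you invoke explicitly (the P\'olya--Szeg\H{o} inequality on $\mathbb{H}^{2}$ and Theorem~\ref{theorem1.3} of Wang--Ye). The ground-state identity
\[
\|u\|_{\mathcal{H}}=\pi\int_{0}^{\infty}(1-e^{-2\rho})(w')^{2}\,d\rho+\pi\int_{0}^{\infty}e^{-\rho}u^{2}\,d\rho,\qquad w=e^{\rho/2}u,
\]
checks out by a single integration by parts (the boundary terms vanish because the radial rearrangement $u^{*}$ of a test function is bounded and compactly supported), and it correctly feeds the chain $\int e^{-\rho}u^{2}\lesssim1$, $\int_{1}^{\infty}(w')^{2}\lesssim1$, hence $|w(1)|\lesssim1$ and $u(\rho)^{2}\lesssim(1+\sqrt{\rho})^{2}e^{-\rho}$ for $\rho\ge1$; the tail integral $\int_{1}^{\infty}u^{4}\sinh\rho\,d\rho$ is then uniformly bounded, while the bulk $\{\rho\le1\}$ has bounded conformal weight and reduces to Theorem~\ref{theorem1.3}.

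It is worth noting how different this is from the route the paper takes for its own four-dimensional analogue, Theorem~\ref{th1.1}, which it \emph{does} prove in detail. There no P\'olya--Szeg\H{o} reduction is available for the fourth-order form $\int(-\Delta_{\mathbb{H}}-\tfrac94)(-\Delta_{\mathbb{H}}+\alpha)u\cdot u\,dV$, so instead the paper splits by the level set $\Omega(u)=\{|u|\ge1\}$ (whose hyperbolic measure is controlled via a Hardy--Sobolev bound on $\int u^{4}\,dV$), writes $u=v\ast\varphi_{1}$ as a potential with $\|v\|_{L^{2}}\le1$ and $\varphi_{1}$ a composed Green kernel, and runs Adams' O'Neil-type rearrangement-of-kernels argument on $\Omega(u)$ using the sharp pointwise estimates of Sections~3--4. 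Your symmetrization-plus-ground-state-transform argument is shorter and more elementary in the two-dimensional, second-order setting --- the substitution $w=e^{\rho/2}u$ does by hand what the Green-function asymptotics do in the paper --- but it does not carry over to the bi-Laplacian, which is exactly why the paper had to develop the heavier machinery. Both proofs exploit the same phenomenon (subtracting $1+4\pi u^{2}$ trades a divergent exponential tail for a convergent $\int u^{4}\,dV$), but realize it via different decompositions: geometric ($\{\rho\le1\}$ vs. $\{\rho>1\}$) in yours, by level set ($\{|u|\ge1\}$ vs. its complement) in the paper's.
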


Recently, both authors confirm in \cite{ly} that the conjecture given by Wang and
Ye \cite{wy} indeed holds for any bounded and convex domain in
$\mathbb{R}^{2}$  via  the Riemann mapping theorem. More precisely, the authors established
in \cite{ly} the following:

\begin{theorem}
Let
$\Omega$ be a  bounded and convex domain in $\mathbb{R}^{2}$.
There exists a finite constant $C({\Omega})>0$ such that
\[
\int_{\Omega}e^{\frac{4\pi u^{2}}{H_{d}(u)}}dxdy\le C(\Omega),\;\;\forall u\in C^{\infty}_{0}(\Omega),
\]
where $H_{d}=\int_{\Omega}|\nabla u|^{2}dxdy-\frac{1}{4}\int_{\Omega}\frac{u^{2}}{d(z,\partial\Omega)^{2}}dxdy$ and $d(z,\partial\Omega)=\min\limits_{z_{1}\in\partial\Omega}|z-z_{1}|$.
\end{theorem}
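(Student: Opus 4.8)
The plan is to transplant the inequality from the convex domain $\Omega$ to the unit disk $\mathbb{B}$ by means of a Riemann map and then quote Theorem~\ref{theorem1.4}. The argument rests on two classical two--dimensional facts: the conformal invariance of the Dirichlet integral, and the sharp distortion estimate for a conformal bijection $f\colon\mathbb{B}\to\Omega$ onto a \emph{convex} domain $\Omega$, namely
\[
\tfrac12(1-|w|^2)\,|f'(w)|\ \le\ d(f(w),\partial\Omega)\ \le\ (1-|w|^2)\,|f'(w)|,\qquad w\in\mathbb{B}.
\]
The lower bound, with the constant $\tfrac12$ in place of the Koebe constant $\tfrac14$ valid for arbitrary simply connected domains, is precisely what makes the coefficient $\tfrac14$ in $H_d$ the admissible one; it is equivalent to the Marx--Strohh\"acker theorem that a convex univalent function is starlike of order $\tfrac12$.

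We first reduce to a normalized situation. Since $\int_\Omega e^{4\pi u^2/H_d(u)}\,dxdy$ is invariant under $u\mapsto\lambda u$, and since $H_d(u)>0$ for every $u\in C_0^\infty(\Omega)\setminus\{0\}$ (the Hardy inequality on the convex domain $\Omega$, in the improved form of Brezis--Marcus, gives $H_d(u)\ge\lambda(\Omega)\int_\Omega u^2$), it is enough to show that $\int_\Omega e^{4\pi u^2}\,dxdy\le C(\Omega)$ whenever $H_d(u)=1$. Fix a conformal bijection $f\colon\mathbb{B}\to\Omega$ and put $v:=u\circ f$; since $f$ is a biholomorphism, $f^{-1}(\operatorname{supp}u)$ is a compact subset of $\mathbb{B}$, so $v\in C_0^\infty(\mathbb{B})$. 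Now comes the energy comparison, which is the heart of the matter. Conformal invariance of the Dirichlet integral gives $\int_\Omega|\nabla u|^2\,dxdy=\int_{\mathbb{B}}|\nabla v|^2\,dw$. Changing variables with $dxdy=|f'(w)|^2\,dw$ and invoking the lower distortion bound,
\[
\tfrac14\int_\Omega\frac{u^2}{d(z,\partial\Omega)^2}\,dxdy
=\tfrac14\int_{\mathbb{B}}\frac{v^2}{d(f(w),\partial\Omega)^2}\,|f'(w)|^2\,dw
\le\int_{\mathbb{B}}\frac{v^2}{(1-|w|^2)^2}\,dw,
\]
so that, subtracting, $\|v\|_{\mathcal H}=\int_{\mathbb{B}}|\nabla v|^2\,dw-\int_{\mathbb{B}}\frac{v^2}{(1-|w|^2)^2}\,dw\le H_d(u)=1$.

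Next we transfer Theorem~\ref{theorem1.4} back to $\Omega$. Applied to $v\in C_0^\infty(\mathbb{B})$ with $\|v\|_{\mathcal H}\le1$, it gives $\int_{\mathbb{B}}\frac{e^{4\pi v^2}-1-4\pi v^2}{(1-|w|^2)^2}\,dw\le C$ with an absolute constant $C$. The upper distortion bound yields $|f'(w)|^2\le \dfrac{4\,d(f(w),\partial\Omega)^2}{(1-|w|^2)^2}\le\dfrac{\operatorname{diam}(\Omega)^2}{(1-|w|^2)^2}$, whence, using $e^{4\pi v^2}-1-4\pi v^2\ge0$,
\[
\int_\Omega\bigl(e^{4\pi u^2}-1-4\pi u^2\bigr)\,dxdy
=\int_{\mathbb{B}}\bigl(e^{4\pi v^2}-1-4\pi v^2\bigr)\,|f'(w)|^2\,dw
\le \operatorname{diam}(\Omega)^2\,C .
\]
Finally, the elementary inequality $e^{s}\le C_0\,(e^{s}-1-s)+C_1$, valid for all $s\ge0$ with absolute constants $C_0,C_1$, gives $\int_\Omega e^{4\pi u^2}\,dxdy\le C_0\operatorname{diam}(\Omega)^2\,C+C_1|\Omega|=:C(\Omega)$. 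Undoing the normalization (apply this to $u/\sqrt{H_d(u)}$ for general $u\not\equiv0$, the case $u\equiv0$ being trivial) completes the proof.

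The genuinely delicate point is the convex distortion estimate with the constant $\tfrac12$: it is what couples the coefficient $\tfrac14$ in $H_d$ to the coefficient $1$ in front of $\int_{\mathbb{B}}\frac{v^2}{(1-|w|^2)^2}\,dw$ in $\|v\|_{\mathcal H}$, and hence to the sharp Trudinger--Moser constant $4\pi$ furnished by Theorem~\ref{theorem1.4}. Conformal invariance, the changes of variables, the application of Theorem~\ref{theorem1.4}, and the final elementary estimate are all routine.
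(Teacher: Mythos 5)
Your proof is correct and matches the approach the paper attributes to \cite{ly}: fix a Riemann map $f\colon\mathbb{B}\to\Omega$, use conformal invariance of the Dirichlet energy together with the convex Koebe/Marx--Strohh\"acker distortion bound $\tfrac12(1-|w|^2)|f'(w)|\le d(f(w),\partial\Omega)\le(1-|w|^2)|f'(w)|$ to pass from $H_d(u)\le1$ to $\|v\|_{\mathcal H}\le1$ for $v=u\circ f$, then pull Theorem~\ref{theorem1.4} back to $\Omega$ using the same distortion bound to control the Jacobian. One small terminological slip: the estimate $|f'(w)|^2\le 4\,d(f(w),\partial\Omega)^2/(1-|w|^2)^2$ you invoke in the last step comes from the lower distortion inequality (the $\tfrac12$ side), not the upper one; the mathematics is unaffected.
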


It  then becomes a very interesting and highly nontrivial  question whether Theorem \ref{theorem1.3} holds for higher  order
derivatives. In this paper we shall show this is indeed the case on $n-$dimensional hyperbolic spaces $\mathbb{B}^n$
when $n=4$.

\medskip

To state our results, let us agree to some conventions.
We use the Poincar\'e model of the hyperbolic space $\mathbb{B}^{n}$. Recall
that the Poincar\'e  model is the unit ball
\[\mathbb{B}^{n}=\{x=(x_{1},\cdots,x_{n})\in \mathbb{R}^{n}| |x|<1\}\]
equipped with the usual Poincar\'e metric
\[
ds^{2}=\frac{4(dx^{2}_{1}+\cdots+dx^{2}_{n})}{(1-|x|^{2})^{2}}.
\]
The hyperbolic volume element is
\[
dV=\left(\frac{2}{1-|x|^{2}}\right)^{n}dx.\] The associated
Laplace-Beltrami operator is given by
\[
\Delta_{\mathbb{H}}=\frac{1-|x|^{2}}{4}\left\{(1-|x|^{2})\sum^{n}_{i=1}\frac{\partial^{2}}{\partial
x^{2}_{i}}+2(n-2)\sum^{n}_{i=1}x_{i}\frac{\partial}{\partial
x_{i}}\right\}.
\]
The GJMS operators on $\mathbb{B}^{n}$ are given by (see \cite{GJMS}, \cite{j})
\[
P_{1}(P_{1}+2)\cdots(P_{1}+k(k-1)), \;\; k\in \mathbb{N}\backslash\{0\},
\]
where $P_{1}=-\Delta_{\mathbb{H}}-\frac{n(n-2)}{4}$ is the conformal Laplacian on $\mathbb{B}^{n}$. In the case $n=4$ and $k=2$, the GJMS operator is nothing but the Paneitz operator on
$\mathbb{B}^{4}$ which satisfies (see  \cite{liu})
\[
P_{1}(P_{1}+2)=\left(\frac{1-|x|^{2}}{2}\right)^{4}\Delta^{2},
\]
where $\Delta=\sum\limits^{4}_{i=1}\frac{\partial^{2}}{\partial
x^{2}_{i}}$ is the Laplacian on $\mathbb{R}^{4}$. Therefore, for $u\in C^{\infty}_{0}(\mathbb{B}^{4})$,
\begin{equation}\label{b1.1}
\int_{\mathbb{B}^{4}}(-\Delta_{\mathbb{H}})(-\Delta_{\mathbb{H}}+2)u\cdot
udV=\int_{\mathbb{B}^{4}}|\Delta
u|^{2}dx.
\end{equation}

It is known that the  spectral gap of $-\Delta_{\mathbb{H}}$ on
$L^{2}(\mathbb{B}^{n})$ is $\frac{(n-1)^{2}}{4}$ (see e.g.
\cite{m1}), i.e.
\begin{equation}\label{1.4}
\int_{\mathbb{B}^{n}}|\nabla_{\mathbb{H}}u|^{2}dV\geq\frac{(n-1)^{2}}{4}\int_{\mathbb{B}^{n}}u^{2}dV,\;\;\;
u\in C^{\infty}_{0}(\mathbb{B}^{n}),
\end{equation}
and the constant $\frac{(n-1)^{2}}{4}$ is sharp. Therefore, by (\ref{b1.1}), we have in dimension four,
\[
\int_{\mathbb{B}^{4}}|\Delta
u|^{2}dx=\int_{\mathbb{B}^{4}}(-\Delta_{\mathbb{H}})(-\Delta_{\mathbb{H}}+2)u\cdot
udV\geq\frac{9}{16}\int_{\mathbb{B}^{4}}
u^{2}dV=9\int_{\mathbb{B}^{4}}\frac{u^{2}}{(1-|x|^{2})^{4}}dx.
\]
Furthermore,  the constant $9$  in above inequality is also sharp (see e.g. \cite{ow}).

\medskip

One of the main results of this paper is the following
\begin{theorem}\label{th1.1} Let $\alpha>0$.
Then there exists a constant $C_{\alpha}>0$ such that for all
$u\in C^{\infty}_{0}(\mathbb{B}^{4})$ with
\[
\int_{\mathbb{B}^{4}}\left(-\Delta_{\mathbb{H}}-9/4\right)(-\Delta_{\mathbb{H}}+\alpha)u\cdot
udV\leq1,
\]
there holds
\[
\int_{\mathbb{B}^{4}}(e^{32\pi^{2} u^{2}}-1-32\pi^{2} u^{2})dV=16\int_{\mathbb{B}^{4}}\frac{e^{32\pi^{2} u^{2}}-1-32\pi^{2} u^{2}}{(1-|x|^{2})^{4}}dx\leq C_{\alpha}.
\]
\end{theorem}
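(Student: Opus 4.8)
\bigskip
\noindent\emph{Proof sketch.} The plan is to turn the fourth--order differential constraint into a convolution representation by means of Fourier analysis on $\mathbb{B}^{4}$, to obtain sharp estimates for the resulting convolution kernel, and then to run an Adams--O'Neil type argument. Since the bottom of the $L^{2}(\mathbb{B}^{4})$--spectrum of $-\Delta_{\mathbb{H}}$ is $\frac{9}{4}$, the operator $L_{\alpha}:=\big(-\Delta_{\mathbb{H}}-\frac{9}{4}\big)\big(-\Delta_{\mathbb{H}}+\alpha\big)$ is a nonnegative self--adjoint operator; using the Helgason--Fourier transform and the Plancherel theorem on $\mathbb{B}^{4}$ (whose density is expressed through the Harish-Chandra $\mathfrak{c}$--function) one defines $G_{\alpha}:=L_{\alpha}^{-1/2}$, and the hypothesis becomes $\|f\|_{L^{2}(\mathbb{B}^{4})}\le 1$, $u=G_{\alpha}f$, with $f:=L_{\alpha}^{1/2}u$. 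As $L_{\alpha}$ is a polynomial in $-\Delta_{\mathbb{H}}$, it commutes with the isometry group of $\mathbb{B}^{4}$, so $G_{\alpha}$ is convolution (in the hyperbolic sense) against a radial kernel $g_{\alpha}(d_{\mathbb{H}}(x,y))$ whose spherical transform equals $\big(\mu^{2}(\mu^{2}+\tfrac{9}{4}+\alpha)\big)^{-1/2}$, where $\mu^{2}+\tfrac{9}{4}$ is the eigenvalue of $-\Delta_{\mathbb{H}}$ on the spherical function $\varphi_{\mu}$.

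The key point is to extract from the spherical inversion formula sharp information on $g_{\alpha}$. One shows that $g_{\alpha}>0$ is strictly decreasing, that $g_{\alpha}(\rho)-\dfrac{1}{4\pi^{2}\rho^{2}}$ has at most a logarithmic singularity as $\rho\to 0^{+}$ — the leading term $\tfrac{1}{4\pi^{2}\rho^{2}}$ is the fundamental solution of $-\Delta$ on $\mathbb{R}^{4}$ and arises from the high--frequency asymptotics $\big(\mu^{2}(\mu^{2}+\tfrac{9}{4}+\alpha)\big)^{-1/2}\sim\mu^{-2}$, which is precisely the flat symbol $|\xi|^{-2}$ — and that $g_{\alpha}(\rho)$ decays exponentially as $\rho\to\infty$, at a rate (governed by the Harish-Chandra expansion of $\varphi_{\mu}$ and the behaviour of the Plancherel density near $\mu=0$) fast enough to beat the hyperbolic volume growth. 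Combining these with $dV\sim c\,\rho^{3}d\rho$ for small $\rho$, the decreasing rearrangement of $g_{\alpha}$ on $(\mathbb{B}^{4},dV)$ satisfies
\[
\int_{t}^{\infty}g_{\alpha}^{\ast}(s)^{2}\,ds=\frac{1}{32\pi^{2}}\,\log\frac{1}{t}+O(1)\qquad(t\to 0^{+}),\qquad g_{\alpha}^{\ast}\in L^{2}(1,\infty).
\]
The coefficient $\dfrac{1}{32\pi^{2}}=\beta_{0}(2,4)^{-1}$ is produced exactly by this leading singularity together with the volume element, which is why $32\pi^{2}$ is the admissible — and, as is classical, optimal — constant.

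With these estimates in hand, O'Neil's rearrangement inequality applied to $u=g_{\alpha}\ast f$ followed by the substitution $t=e^{-s}$ recasts the estimate of the super--level sets of $u$ as a one--dimensional inequality to which Adams' lemma (the higher--order analogue of Moser's lemma) applies, its hypotheses being exactly the two facts displayed above; this yields $\int_{\{32\pi^{2}u^{2}\ge 1\}}\big(e^{32\pi^{2}u^{2}}-1-32\pi^{2}u^{2}\big)\,dV\le C_{\alpha}$. On the complementary set one uses the elementary bound $e^{32\pi^{2}u^{2}}-1-32\pi^{2}u^{2}=O(u^{4})$ together with a uniform bound $\int_{\mathbb{B}^{4}}u^{4}\,dV\le C_{\alpha}$ (a subcritical consequence of the same kernel analysis and the $L^{p}$--mapping properties of $G_{\alpha}$ — this is where the factor $-\Delta_{\mathbb{H}}+\alpha$ with $\alpha>0$ enters, and where the two subtracted terms and the infinite volume of $\mathbb{B}^{4}$ are reconciled, since $u^{4}\in L^{1}$ while $1,u^{2}\notin L^{1}(\mathbb{B}^{4})$). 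Adding the two contributions proves the bound, and the displayed identity with the weight $16(1-|x|^{2})^{-4}$ is merely the expression $dV=\big(2/(1-|x|^{2})\big)^{4}dx$ of the hyperbolic volume in the Poincar\'e model.

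The main obstacle is the kernel analysis of the second paragraph: extracting the expansion of $g_{\alpha}$ near $\rho=0$ with the \emph{exact} constant $\tfrac{1}{4\pi^{2}}$ in front of $\rho^{-2}$ and, simultaneously, the global monotonicity and the precise exponential decay at infinity, starting from the hypergeometric representation of the inverse spherical transform and the $\mathfrak{c}$--function of $\mathbb{B}^{4}$. Once this is done, the rest is a careful but by now standard deployment of the Adams--Moser--O'Neil machinery.
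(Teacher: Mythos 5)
Your overall architecture matches the paper's: both reduce the constraint to $u=g_\alpha\ast f$ with $\|f\|_{L^2}\le 1$, both establish a decreasing-rearrangement estimate for the kernel with the sharp constant $1/(32\pi^2)$ appearing from the near-origin singularity $\rho^{-2}/(4\pi^2)$, both invoke O'Neil's lemma, a change of variables $t=e^{-s}$, and Adams' one-dimensional lemma to handle $\{|u|\ge 1\}$, and both dispose of $\{|u|<1\}$ via a uniform $L^4$ bound and the Taylor expansion of $e^{x}-1-x$. That is the same proof skeleton.

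Where you diverge from the paper is in how the kernel estimates are actually produced. You propose to compute $g_\alpha=L_\alpha^{-1/2}$ directly by inverting the spherical/Helgason transform of the symbol $\bigl(\mu^2(\mu^2+\tfrac{9}{4}+\alpha)\bigr)^{-1/2}$ and extracting its $\rho\to0$ and $\rho\to\infty$ asymptotics (plus a monotonicity claim, which is in fact unnecessary for the argument). The paper instead works via the explicit hyperbolic heat kernel in dimension $4$ and the Mellin (subordination) formula, and — crucially — never estimates $g_\alpha$ directly. It uses the factorization $g_\alpha=(-\Delta_{\mathbb H}-\tfrac94)^{-1/2}\ast(-\Delta_{\mathbb H}+\alpha)^{-1/2}$, bounds this pointwise by $(-\Delta_{\mathbb H}-\tfrac94)^{-1}$ (which is a one-line explicit integral) to get the sharp near-zero rearrangement estimate in Lemma~4.1/4.3, and then applies O'Neil's convolution lemma to the two factors separately to get the tail estimate, exploiting that $(-\Delta_{\mathbb H}+\alpha)^{-1/2}$ with $\alpha>0$ decays like $e^{-(3+\alpha_0)\rho}$, strictly faster than the critical factor. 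This factorization is the technical heart of the paper's proof and it sidesteps entirely the "main obstacle" you flag; it is worth knowing because the direct spherical-inversion route you sketch, while in principle viable, requires delicate hypergeometric/$\mathfrak c$-function asymptotics that the heat-kernel route avoids. One further small difference: you propose to get $\|u\|_{L^4(dV)}\lesssim 1$ from $L^p$-mapping of $G_\alpha$; the paper instead obtains it more directly from the spectral gap $(1.4)$ combined with the Mancini--Sandeep Hardy--Sobolev inequality $(5.2)$. Both routes are fine; the paper's is shorter since it cites a known inequality.

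So: no genuine gap in the strategy, but you should be aware that the realization of the kernel estimates — the part you correctly isolate as the crux — is done quite differently in the paper, via heat-kernel subordination plus a convolution factorization and a pointwise domination trick, rather than via spherical inversion. If you were to carry out your program in detail, the factorization trick would be the thing to borrow.
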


Choosing $\alpha=\frac{1}{4}$ and combing (\ref{b1.1}) and Theorem \ref{th1.1},  we have
 the following Hardy-Adams
inequalities
\begin{theorem}\label{c2}
There exists a constant $C_{1}>0$ such that for all $u\in
C^{\infty}_{0}(\mathbb{B}^{4})$ with
\[
\int_{\mathbb{B}^{4}}|\Delta
u|^{2}dx-9\int_{\mathbb{B}^{4}}\frac{u^{2}}{(1-|x|^{2})^{4}}dx\leq1,
\]
there holds
\[
\int_{\mathbb{B}^{4}}(e^{32\pi^{2}
u^{2}}-1-32u^{2})dV=16\int_{\mathbb{B}^{4}}\frac{e^{32\pi^{2}
u^{2}}-1-32u^{2}}{(1-|x|^{2})^{4}}dx\leq C_{1}.
\]
\end{theorem}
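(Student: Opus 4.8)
The plan is to deduce Theorem \ref{c2} directly from Theorem \ref{th1.1} by a scaling (normalization) argument, together with the Paneitz factorization \eqref{b1.1} and the sharpness of the Hardy constant $9$. First, I would make the identity \eqref{b1.1} do the bookkeeping: for $u\in C^\infty_0(\mathbb{B}^4)$, choosing $\alpha=\tfrac14$ in the quadratic form appearing in Theorem \ref{th1.1} gives
\[
\int_{\mathbb{B}^4}\Bigl(-\Delta_{\mathbb H}-\tfrac94\Bigr)\Bigl(-\Delta_{\mathbb H}+\tfrac14\Bigr)u\cdot u\,dV
=\int_{\mathbb{B}^4}(-\Delta_{\mathbb H})(-\Delta_{\mathbb H}+2)u\cdot u\,dV-9\int_{\mathbb{B}^4}u^2\,dV,
\]
since $\bigl(t-\tfrac94\bigr)\bigl(t+\tfrac14\bigr)=t(t+2)+\bigl(-4t-\tfrac9{16}\bigr)$ — I would verify the polynomial identity carefully, the point being that the extra term is an honest multiple of the $L^2$ Rayleigh form rather than something involving $\nabla_{\mathbb H}u$. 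Combined with \eqref{b1.1} and $dV=16(1-|x|^2)^{-4}dx$ in dimension four, this says exactly that the hypothesis of Theorem \ref{c2},
\[
\int_{\mathbb{B}^4}|\Delta u|^2\,dx-9\int_{\mathbb{B}^4}\frac{u^2}{(1-|x|^2)^4}\,dx\le 1,
\]
is equivalent to the hypothesis of Theorem \ref{th1.1} with $\alpha=\tfrac14$.

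Second, once the two hypotheses are identified, I would invoke Theorem \ref{th1.1} with $\alpha=\tfrac14$ to obtain a constant $C_{1/4}>0$ with
\[
\int_{\mathbb{B}^4}\bigl(e^{32\pi^2 u^2}-1-32\pi^2 u^2\bigr)\,dV\le C_{1/4}
\]
for all such $u$, and set $C_1=C_{1/4}$. The conclusion of Theorem \ref{c2} is then immediate, modulo the harmless typographical remark that the $32u^2$ written in the statement of Theorem \ref{c2} should read $32\pi^2 u^2$ to match Theorem \ref{th1.1}; I would write the argument with $32\pi^2 u^2$ throughout and note the discrepancy. One should also check that the quadratic form $\int_{\mathbb{B}^4}(-\Delta_{\mathbb H}-\tfrac94)(-\Delta_{\mathbb H}+\tfrac14)u\cdot u\,dV$ is nonnegative on $C^\infty_0(\mathbb{B}^4)$ — this follows from spectral theory since $-\Delta_{\mathbb H}\ge \tfrac94$ on $L^2(\mathbb{B}^4)$ by \eqref{1.4} with $n=4$, so that both factors $-\Delta_{\mathbb H}-\tfrac94\ge 0$ and $-\Delta_{\mathbb H}+\tfrac14>0$ act compatibly, making the normalization meaningful (in particular the constraint set is nonempty and scaling $u\mapsto \lambda u$ is available should one ever need to reduce to the "$\le 1$" case).

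The only real content here is the algebraic identification in the first step, and there is essentially no obstacle: everything reduces to the factorization identity \eqref{b1.1} for the Paneitz operator on $\mathbb{B}^4$ (quoted from \cite{liu}) and the elementary identity $\bigl(t-\tfrac94\bigr)\bigl(t+\tfrac14\bigr)+9=t(t+2)$, which I would double-check by expanding: $\bigl(t-\tfrac94\bigr)\bigl(t+\tfrac14\bigr)=t^2-2t-\tfrac9{16}$, and $t(t+2)=t^2+2t$, so in fact one needs $\bigl(t-\tfrac94\bigr)\bigl(t+\tfrac14\bigr)=t(t+2)-4t-\tfrac9{16}$; hence the correct statement involves both the $\int u^2$ term and a multiple of $\int(-\Delta_{\mathbb H})u\cdot u\,dV=\int|\nabla_{\mathbb H}u|^2dV$, and I would need $\alpha$ chosen so that the cross term vanishes. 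This forces a re-examination: writing $(t-\tfrac94)(t+\alpha)=t(t+2)+(\alpha-\tfrac94-2)t-\tfrac94\alpha$, the coefficient of the middle term vanishes iff $\alpha=\tfrac{17}{4}$, not $\tfrac14$; so in fact the clean reduction uses the hypothesis in Theorem \ref{c2} only as a consequence of Theorem \ref{th1.1} after absorbing the sign of the leftover $t$-term, using that $\int|\nabla_{\mathbb H}u|^2dV\ge 0$. The honest route, and the one I would write up, is: from $\alpha=\tfrac14$ the hypothesis of Theorem \ref{th1.1} reads $\int|\Delta u|^2dx-4\int|\nabla_{\mathbb H}u|^2dV\,\bigl(\tfrac14\bigr)\cdots$ — more transparently, one simply observes that by \eqref{b1.1} and \eqref{1.4}, $\int(-\Delta_{\mathbb H}-\tfrac94)(-\Delta_{\mathbb H}+\tfrac14)u\cdot u\,dV\le \int|\Delta u|^2dx-9\int u^2\,dV$ whenever the discarded term has a favorable sign, so the constraint of Theorem \ref{c2} implies the constraint of Theorem \ref{th1.1}, and the exponential bound follows. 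The main thing to get right is therefore precisely this sign/coefficient comparison between the two quadratic forms; once it is pinned down, Theorem \ref{c2} is a one-line corollary of Theorem \ref{th1.1}.
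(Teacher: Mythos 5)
Your final argument is valid and is essentially the paper's one-line reduction: with $\alpha=\tfrac14$ the constraint of Theorem \ref{c2} implies that of Theorem \ref{th1.1}, so the conclusion follows with $C_1=C_{1/4}$ (and yes, $32u^2$ in the statement is a misprint for $32\pi^2u^2$). However, the circuitousness in your write-up is caused by a sign misprint in \eqref{b1.1}: since $P_1=-\Delta_{\mathbb{H}}-2$ in dimension four we have $P_1+2=-\Delta_{\mathbb{H}}$, hence the Paneitz factorization gives $P_1(P_1+2)=(-\Delta_{\mathbb{H}}-2)(-\Delta_{\mathbb{H}})$, and \eqref{b1.1} should read $\int_{\mathbb{B}^4}(-\Delta_{\mathbb{H}})(-\Delta_{\mathbb{H}}-2)u\cdot u\,dV=\int_{\mathbb{B}^4}|\Delta u|^2\,dx$. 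This is also the only sign for which the sharp Hardy constant $9$ asserted just after \eqref{b1.1} actually comes out (with $+2$ the spectral lower bound would be $16\cdot\tfrac94\cdot\tfrac{17}{4}=153$, not $9$). With the sign corrected, setting $t=-\Delta_{\mathbb{H}}$ one has the exact identity $\bigl(t-\tfrac94\bigr)\bigl(t+\tfrac14\bigr)=t^2-2t-\tfrac9{16}=t(t-2)-\tfrac9{16}$, so that
\[
\int_{\mathbb{B}^4}\Bigl(-\Delta_{\mathbb{H}}-\tfrac94\Bigr)\Bigl(-\Delta_{\mathbb{H}}+\tfrac14\Bigr)u\cdot u\,dV=\int_{\mathbb{B}^4}|\Delta u|^2\,dx-\tfrac9{16}\int_{\mathbb{B}^4}u^2\,dV=\int_{\mathbb{B}^4}|\Delta u|^2\,dx-9\int_{\mathbb{B}^4}\frac{u^2}{(1-|x|^2)^4}\,dx,
\]
and the two constraint sets literally coincide; no sign absorption is needed. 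Your fallback (discarding the nonnegative term $4\int|\nabla_{\mathbb{H}}u|^2\,dV$ that arises if one takes the $+2$ at face value) is a correct workaround and still proves the theorem, but it obscures that the reduction is exact. One further bookkeeping slip to straighten out before writing up: since $dV=16(1-|x|^2)^{-4}dx$, one has $9\int\frac{u^2}{(1-|x|^2)^4}\,dx=\tfrac9{16}\int u^2\,dV$, not $9\int u^2\,dV$.
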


Theorem \ref{c2} implies the following improved  Adams
inequalities.
\begin{theorem}\label{th1.6} Let $\lambda<9$. Then
there exists a constant $C_{\lambda}>0$ such that for all $u\in
C^{\infty}_{0}(\mathbb{B}^{4})$ with
\[
\int_{\mathbb{B}^{4}}|\Delta
u|^{2}dx-\lambda\int_{\mathbb{B}^{4}}\frac{u^{2}}{(1-|x|^{2})^{4}}dx\leq1,
\]
there holds
\[
\int_{\mathbb{B}^{4}}(e^{32\pi^{2}
u^{2}}-1)dV=16\int_{\mathbb{B}^{4}}\frac{e^{32\pi^{2}
u^{2}}-1}{(1-|x|^{2})^{4}}dx\leq C_{\lambda}.
\]
\end{theorem}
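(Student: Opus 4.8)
The plan is to deduce Theorem~\ref{th1.6} from Theorem~\ref{c2} by absorbing the ``missing'' amount of Hardy energy. Suppose $u\in C^{\infty}_{0}(\mathbb{B}^{4})$ satisfies
\[
\int_{\mathbb{B}^{4}}|\Delta u|^{2}\,dx-\lambda\int_{\mathbb{B}^{4}}\frac{u^{2}}{(1-|x|^{2})^{4}}\,dx\leq 1
\]
for some fixed $\lambda<9$. Writing $A=\int_{\mathbb{B}^{4}}|\Delta u|^{2}\,dx$ and $B=9\int_{\mathbb{B}^{4}}\frac{u^{2}}{(1-|x|^{2})^{4}}\,dx$, the hypothesis reads $A-\tfrac{\lambda}{9}B\leq 1$, while the sharp Hardy inequality recorded just before Theorem~\ref{th1.1} gives $A\geq B\geq 0$. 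The first step is the elementary observation that these two facts force $A-B$ to be bounded: indeed $A-B = (A-\tfrac{\lambda}{9}B) - (1-\tfrac{\lambda}{9})B \leq 1$ since $1-\tfrac{\lambda}{9}>0$ and $B\geq0$; moreover $A-B\geq0$. Hence $v:=(A-B)^{-1/2}u$ (when $A>B$; the case $A=B$ forces $u\equiv 0$ by sharpness of the Hardy constant together with $A\le 1+\tfrac{\lambda}{9}B$, or is handled trivially) satisfies exactly $\int|\Delta v|^{2}dx - 9\int \frac{v^{2}}{(1-|x|^{2})^{4}}dx \leq 1$, the hypothesis of Theorem~\ref{c2}, and $u^{2}=(A-B)v^{2}$ with $A-B\le 1$, so $u^{2}\le v^{2}$ pointwise.

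The second step is to feed this into the conclusion of Theorem~\ref{c2}. Since $0\le u^{2}\le v^{2}$ pointwise and $t\mapsto e^{32\pi^{2}t}-1$ is increasing in $t\ge 0$, we get pointwise
\[
e^{32\pi^{2}u^{2}}-1 \;\le\; e^{32\pi^{2}v^{2}}-1.
\]
However the right-hand side of Theorem~\ref{c2} controls only $\int_{\mathbb{B}^{4}}(e^{32\pi^{2}v^{2}}-1-32v^{2})\,dV$, so I must still account for the linear term. Here one uses that $\int_{\mathbb{B}^{4}}v^{2}\,dV = \tfrac{16}{9}B/(A-B)$ is itself uniformly bounded: from the sharp Hardy inequality $9\int \frac{v^2}{(1-|x|^2)^4}dx \le \int|\Delta v|^2 dx$ and the normalization $\int|\Delta v|^2 dx \le 1 + 9\int\frac{v^2}{(1-|x|^2)^4}dx$ one does not immediately get a bound on $\int v^{2}dV$ alone — and this is exactly the delicate point. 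To handle it, I would instead invoke the spectral/Hardy gap in the sharper form: the operator $-\Delta_{\mathbb{H}}(-\Delta_{\mathbb{H}}+2) - \tfrac{9}{16}$ (equivalently $\int|\Delta v|^2 dx - 9\int\frac{v^2}{(1-|x|^2)^4}dx$) dominates a positive multiple of $\int v^{2}\,dV$, which follows from the strict spectral gap $\frac{9}{4}<\mathrm{spec}(-\Delta_{\mathbb{H}})$ having a positive distance to the bottom is \emph{not} true — rather, one uses that on the orthogonal complement issue is avoided because we only need: $\int v^2 dV$ is bounded by the \emph{full} normalized quantity $\int|\Delta v|^2 dx$, and $\int |\Delta v|^2 dx \le 1 + 9\int\frac{v^2}{(1-|x|^2)^4}dx \le 1 + \int|\Delta v|^2 dx$, a tautology. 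The clean fix is to note that Theorem~\ref{c2} with $v$ replaced by $v/\sqrt{2}$ (which still satisfies the hypothesis since the quadratic form scales by $1/2$) bounds $\int(e^{16\pi^2 v^2}-1-16\pi^2 v^2)dV$, hence a fortiori $\int \min(v^2,v^4)\,dV$ and in particular, splitting $\{v^2\le 1\}$ and $\{v^2>1\}$, one controls $\int_{\{v^2>1\}} v^2 dV$ by $\int_{\{v^2>1\}}(e^{16\pi^2v^2}-1)dV<\infty$ and $\int_{\{v^2\le 1\}} v^2 dV \le \int_{\{v^2\le1\}}(e^{32\pi^2 v^2}-1-32v^2)\,dV \cdot C$ after absorbing constants — in short, $\int_{\mathbb{B}^4} v^2\,dV$ is bounded by a constant depending only on $C_1$.

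With $\int_{\mathbb{B}^{4}}v^{2}\,dV\le C'$ in hand, the third step is immediate:
\[
\int_{\mathbb{B}^{4}}(e^{32\pi^{2}u^{2}}-1)\,dV \;\le\; \int_{\mathbb{B}^{4}}(e^{32\pi^{2}v^{2}}-1)\,dV \;=\; \int_{\mathbb{B}^{4}}(e^{32\pi^{2}v^{2}}-1-32v^{2})\,dV \;+\; 32\int_{\mathbb{B}^{4}}v^{2}\,dV \;\le\; C_{1}+32C',
\]
which is the desired bound with $C_{\lambda}:=C_{1}+32C'$ depending only on $\lambda$ through $C_{1}$ and $C'$. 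I expect the main obstacle to be precisely the second step — controlling $\int v^{2}\,dV$ uniformly — since neither the Hardy inequality nor the Adams-type bound of Theorem~\ref{c2} gives it directly; the argument above circumvents this by running Theorem~\ref{c2} at the subcritical exponent (rescaling $v\mapsto v/\sqrt2$) and splitting the domain according to the size of $v^{2}$, so that on $\{v^2\le 1\}$ the quantity $v^2$ is comparable to $e^{32\pi^2 v^2}-1-32 v^2$ up to a dimensional constant, while on $\{v^2>1\}$ it is dominated by $e^{16\pi^2 v^2}-1$. Everything else — the bound $A-B\le 1$, the monotonicity of the exponential, and the normalization of $v$ — is elementary and uses only the sharp Hardy inequality already established in the text.
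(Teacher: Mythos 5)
Your opening observation is correct and is in fact the key step: from the sharp Hardy inequality $A\ge B$ and the hypothesis $A-\tfrac{\lambda}{9}B\le 1$ you get $A-B\le 1$, i.e.\ $\int|\Delta u|^2\,dx - 9\int\frac{u^2}{(1-|x|^2)^4}\,dx\le 1$, so \emph{$u$ itself already satisfies the hypothesis of Theorem~\ref{c2}}. You then discard this by normalizing to $v=(A-B)^{-1/2}u$, and that normalization is what breaks the argument. Under your constraint, $A-B$ is \emph{not} bounded away from zero: taking $u$ near-extremal for the Hardy quotient and normalized so that $A-\tfrac{\lambda}{9}B=1$, one finds $\int\frac{u^2}{(1-|x|^2)^4}\,dx\to\tfrac{1}{9-\lambda}$ while $A-B\to 0$, hence $\int v^2\,dV=(A-B)^{-1}\int u^2\,dV\to\infty$. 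So the quantity you need to bound in your third step is genuinely not uniformly bounded, and no workaround can save it.

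The specific patch you propose for the small set is also incorrect pointwise. On $\{v^2\le 1\}$ you assert $v^2\lesssim e^{32\pi^2v^2}-1-32\pi^2v^2$, but the right-hand side equals $\tfrac12(32\pi^2)^2v^4+O(v^6)=o(v^2)$ as $v\to 0$, so this inequality fails near $v=0$; only the \emph{reverse} bound holds on that set, which is what the paper exploits in the proof of Theorem~\ref{th1.1}, not what you need here. Thus the split does not control $\int_{\{v^2\le1\}}v^2\,dV$.

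The paper's proof stays with $u$ and avoids all of this. From the Hardy inequality, $(9-\lambda)\int\frac{u^2}{(1-|x|^2)^4}\,dx\le \int|\Delta u|^2\,dx-\lambda\int\frac{u^2}{(1-|x|^2)^4}\,dx\le 1$, giving the uniform bound $\int\frac{u^2}{(1-|x|^2)^4}\,dx\le\tfrac{1}{9-\lambda}$; the same display, after subtracting $(9-\lambda)\int\frac{u^2}{(1-|x|^2)^4}\,dx\ge 0$, shows $u$ satisfies the hypothesis of Theorem~\ref{c2}. Then
\[
\int_{\mathbb{B}^4}(e^{32\pi^2u^2}-1)\,dV
=16\int_{\mathbb{B}^4}\frac{e^{32\pi^2u^2}-1-32\pi^2u^2}{(1-|x|^2)^4}\,dx
+16\cdot 32\pi^2\int_{\mathbb{B}^4}\frac{u^2}{(1-|x|^2)^4}\,dx
\le C_1+\frac{16\cdot 32\pi^2}{9-\lambda}.
\]
No rescaling or domain splitting is required; the constraint to feed into Theorem~\ref{c2} is the inequality $\le 1$, and forcing equality is precisely what costs you the uniform $L^2$ bound.
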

As an application of the above theorem,
we also have the following Hardy-Adams inequality  which is a higher dimensional analogue of the
Hardy-Trudinger-Moser inequality given by Wang and Ye \cite{wy} and the authors \cite{ly}.
\begin{theorem} \label{th1.7}
There exists a constant $C_{3}>0$ such that for all $u\in
C^{\infty}_{0}(\mathbb{B}^{4})$ with
\[
\int_{\mathbb{B}^{4}}|\Delta u|^{2}dx-9\int_{\mathbb{B}^{4}}\frac{u^{2}}{(1-|x|^{2})^{4}}dx\leq1,
\]
there holds
\[
\int_{\mathbb{B}^{4}}e^{32\pi^{2} u^{2}}dx\leq C_{3}.
\]
\end{theorem}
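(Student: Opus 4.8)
The plan is to deduce Theorem \ref{th1.7} from the Hardy--Adams inequality of Theorem \ref{c2} together with a Hardy--Rellich inequality carrying a (rapidly decaying) weighted remainder term. First I would split
\[
\int_{\mathbb{B}^{4}}e^{32\pi^{2}u^{2}}dx=\int_{\mathbb{B}^{4}}\bigl(e^{32\pi^{2}u^{2}}-1-32\pi^{2}u^{2}\bigr)dx+32\pi^{2}\int_{\mathbb{B}^{4}}u^{2}\,dx+|\mathbb{B}^{4}|.
\]
The last term is the finite number $|\mathbb{B}^{4}|=\pi^{2}/2$. For the first term, since $e^{t}-1-t\geq0$ for $t\geq0$ and $(1-|x|^{2})^{-4}\geq1$ on $\mathbb{B}^{4}$, it is dominated by $\int_{\mathbb{B}^{4}}\frac{e^{32\pi^{2}u^{2}}-1-32\pi^{2}u^{2}}{(1-|x|^{2})^{4}}dx$; and the hypothesis of Theorem \ref{th1.7} is \emph{identical} to that of Theorem \ref{c2}, so this is $\leq C_{1}/16$. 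Thus the whole statement reduces to a single point: that $\int_{\mathbb{B}^{4}}u^{2}\,dx$ is bounded by an absolute constant under the constraint of the theorem.

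To control $\int_{\mathbb{B}^{4}}u^{2}\,dx=\tfrac{1}{16}\int_{\mathbb{B}^{4}}(1-|x|^{2})^{4}u^{2}\,dV$, I would rewrite the constraint, exactly as in the derivation of Theorem \ref{c2} from \eqref{b1.1}, in the equivalent form
\[
\int_{\mathbb{B}^{4}}\Bigl(-\Delta_{\mathbb{H}}-\tfrac94\Bigr)\Bigl(-\Delta_{\mathbb{H}}+\tfrac14\Bigr)u\cdot u\,dV\leq1 .
\]
Since $-\Delta_{\mathbb{H}}\geq\tfrac94$ on $L^{2}(\mathbb{B}^{4})$, we have $-\Delta_{\mathbb{H}}+\tfrac14\geq\tfrac52$, and as the two factors are commuting functions of $-\Delta_{\mathbb{H}}$ the functional calculus gives that the left-hand side is $\geq\tfrac52\bigl(\int_{\mathbb{B}^{4}}|\nabla_{\mathbb{H}}u|^{2}dV-\tfrac94\int_{\mathbb{B}^{4}}u^{2}dV\bigr)$. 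Hence it suffices to establish the Hardy inequality with weighted remainder
\[
\int_{\mathbb{B}^{4}}|\nabla_{\mathbb{H}}u|^{2}\,dV-\frac94\int_{\mathbb{B}^{4}}u^{2}\,dV\;\geq\;c\int_{\mathbb{B}^{4}}(1-|x|^{2})^{4}u^{2}\,dV,\qquad u\in C_{0}^{\infty}(\mathbb{B}^{4}),
\]
for some absolute $c>0$: combined with the previous bound this forces $\int_{\mathbb{B}^{4}}(1-|x|^{2})^{4}u^{2}\,dV\leq\tfrac{2}{5c}$, hence $\int_{\mathbb{B}^{4}}u^{2}\,dx\leq\tfrac{1}{40c}$, and the proof is complete.

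For this remainder inequality I would use the ground-state substitution. Let $\varphi>0$ be the radial solution of $-\Delta_{\mathbb{H}}\varphi=\tfrac94\varphi$ on $\mathbb{B}^{4}$ (in geodesic polar coordinates it decays like $e^{-\frac32\rho}$, up to a polynomial factor). Writing $u=\varphi v$ with $v\in C_{0}^{\infty}(\mathbb{B}^{4})$, the standard identity yields $\int|\nabla_{\mathbb{H}}u|^{2}dV-\tfrac94\int u^{2}dV=\int|\nabla_{\mathbb{H}}v|^{2}\varphi^{2}\,dV$, so the inequality becomes the weighted Poincar\'e estimate $\int|\nabla_{\mathbb{H}}v|^{2}\varphi^{2}\,dV\geq c\int(1-|x|^{2})^{4}\varphi^{2}v^{2}\,dV$. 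Here $(1-|x|^{2})^{4}\varphi^{2}\,dV$ is a \emph{finite} measure on $\mathbb{B}^{4}$ (the exponential decay of $(1-|x|^{2})^{4}\sim e^{-4\rho}$ and of $\varphi^{2}$ beats the volume growth $dV\sim e^{3\rho}d\rho$), while $\varphi^{2}\,dV$ is a locally finite, Muckenhoupt-type weight, so this is a genuine Poincar\'e-type bound; I would prove it by decomposing $v$ into spherical harmonics and, mode by mode, invoking a one-dimensional Hardy inequality in the radial variable with the explicit weights. (Alternatively, one may quote a known improved Hardy--Rellich inequality on the unit ball, or read the remainder off the Plancherel formula for the Helgason--Fourier transform on $\mathbb{B}^{4}$, in keeping with the methods used elsewhere in the paper.) The only non-routine step of the argument is precisely this weighted remainder inequality; once $\int_{\mathbb{B}^{4}}u^{2}\,dx$ is known to be bounded, Theorem \ref{th1.7} follows at once from Theorem \ref{c2} and the finiteness of $|\mathbb{B}^{4}|$.
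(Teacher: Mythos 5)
Your decomposition and overall strategy coincide with the paper's: split $\int e^{32\pi^2u^2}dx$ into the remainder, $|\mathbb{B}^4|$, and $32\pi^2\int u^2dx$; use Theorem~\ref{c2} together with $(1-|x|^2)^{-4}\geq1$ for the remainder; and reduce to showing that $\int_{\mathbb{B}^4}u^2\,dx$ is bounded under the constraint, which both you and the paper convert into a Hardy--Rellich remainder inequality of the form
\[
\int_{\mathbb{B}^4}|\Delta u|^2dx-9\int_{\mathbb{B}^4}\frac{u^2}{(1-|x|^2)^4}dx\;\geq\;C\int_{\mathbb{B}^4}u^2\,dx.
\]
You and the paper diverge only at this final lemma, and this is also where your argument has a gap.

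The paper's Lemma~5.2 first uses the Plancherel formula to peel off one factor and reduce to the first-order inequality
\[
\int_{\mathbb{B}^4}|\nabla_{\mathbb{H}}u|^2dV-\tfrac94\int_{\mathbb{B}^4}u^2dV\geq C\int_{\mathbb{B}^4}u^2dx,
\]
exactly as you do. But then, rather than the hyperbolic ground-state substitution, it substitutes $u=(1-|x|^2)f$, which is \emph{not} the bottom spherical function of $-\Delta_{\mathbb{H}}$ (the latter behaves like $\rho e^{-3\rho/2}$, whereas $1-|x|^2\sim e^{-\rho}$); with this choice the identity collapses to
\[
\int_{\mathbb{B}^4}|\nabla_{\mathbb{H}}u|^2dV-\tfrac94\int_{\mathbb{B}^4}u^2dV=4\Bigl(\int_{\mathbb{B}^4}|\nabla f|^2dx-\int_{\mathbb{B}^4}\frac{f^2}{(1-|x|^2)^2}dx\Bigr),
\]
and the right-hand side is precisely the Euclidean Brezis--Marcus / Wang--Ye improved Hardy inequality on the ball, which the paper cites. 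This is a complete, one-line finish.

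Your proposal instead uses the true ground state $\varphi$ with $-\Delta_{\mathbb{H}}\varphi=\tfrac94\varphi$. The substitution identity $\int|\nabla_{\mathbb{H}}(\varphi v)|^2dV-\tfrac94\int(\varphi v)^2dV=\int|\nabla_{\mathbb{H}}v|^2\varphi^2dV$ is indeed valid for $v\in C_0^\infty(\mathbb{B}^4)$, but the resulting weighted Poincar\'e estimate
\[
\int_{\mathbb{B}^4}|\nabla_{\mathbb{H}}v|^2\varphi^2dV\;\geq\;c\int_{\mathbb{B}^4}(1-|x|^2)^4\varphi^2v^2\,dV
\]
is asserted, not proved. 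You correctly identify it as the only nontrivial step and sketch a spherical-harmonics reduction, but as written this is a genuine gap: the radial $(\ell=0)$ mode, where $v$ need not vanish at the origin, requires an explicit one-dimensional Hardy argument with the degenerate weight $\sim\rho^3$ near the origin and $\sim\rho^2$ at infinity (it does hold, but a short calculation is needed). Your alternative of ``quoting a known improved Hardy--Rellich inequality on the unit ball'' is, in effect, the paper's route. If you adopt the paper's substitution $u=(1-|x|^2)f$ instead of $\varphi$, the weighted Poincar\'e becomes verbatim the Brezis--Marcus inequality and no further work is needed.

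One small slip: you write the factored form of the constraint with $\alpha=\tfrac14$ and the lower bound $-\Delta_{\mathbb{H}}+\tfrac14\geq\tfrac52$; this is consistent with the paper's $\alpha=\tfrac14$ choice before Theorem~\ref{c2}, and is the correct factorization of $(-\Delta_{\mathbb{H}})(-\Delta_{\mathbb{H}}-2)-\tfrac{9}{16}=(-\Delta_{\mathbb{H}}-\tfrac94)(-\Delta_{\mathbb{H}}+\tfrac14)$. (The paper's Lemma~5.2 writes $+\tfrac12$ there, which appears to be a typo.) The rest of your bookkeeping with constants ($16\int u^2dx\leq\frac{2}{5c}$, etc.) is correct.
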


Obviously, this theorem is stronger than the classical Adams inequality in \cite{ad} which is stated as:

\[
\int_{\mathbb{B}^{4}}e^{32\pi^{2} u^{2}}dx\leq C_{3}
\]
under the more restrictive constraint
$\int_{\mathbb{B}^{4}}|\Delta u|^{2}dx\le 1$
for all $u\in C_0^{\infty}(\mathbb{B}^4)$.

\medskip

We remark that in a forthcoming paper, we will establish the Hardy-Adams inequalities on hyperbolic spaces of all dimensions
$n$ when $n$ is even and $n\geq 4$.

\medskip

The organization of the paper is as follows: In Section 2, we review some necessary preliminaries on the hyperbolic spaces of Poincar\'e
model on the unit ball $\mathbb{B}^n$, the convolution, fractional Laplacian and Fourier transform  on the hyperbolic space $\mathbb{B}^n$ defined using the  the  Harish-Chandra $\mathfrak{c}$-function;   Section 3 gives the  pointwise estimates of Green function for fractional Laplacians; Section 4 establishes the symmetrization functions of the Green functions; Section 5 offers the proofs of our main results, namely Theorems \ref{th1.1}, \ref{c2}, \ref{th1.6} and \ref{th1.7}.

\section{Preliminaries on Fourier transform and fractional Laplacians on hyperbolic spaces }
We begin by quoting some preliminary facts which will be needed in
the sequel and  refer to \cite{ah,ge,he,he2,hu,liup} for more information about this subject.

 Recall that the Poincar\'e
model is the unit ball
\[\mathbb{B}^{n}=\{x=(x_{1},\cdots,x_{n})\in \mathbb{R}^{n}| |x|<1\}\]
equipped with the usual Poincar\'e metric
\[
ds^{2}=\frac{4(dx^{2}_{1}+\cdots+dx^{2}_{n})}{(1-|x|^{2})^{2}}.
\]
The distance from origin to $x\in \mathbb{B}^{n}$ is
\[
\rho(x)=\log\frac{1+|x|}{1-|x|}.
\]
and the polar coordinate is
\[
\int_{\mathbb{B}^{n}}fdV=\int^{+\infty}_{0}\int_{\mathbb{S}^{n-1}}f\cdot(\sinh\rho)^{n-1}
d\rho d\sigma,\;\; f\in L^{1}(\mathbb{B}^{n}).
\]

For each $a\in\mathbb{B}^{n}$, we define the  M\"obius
transformations $T_{a}$ by (see e.g. \cite{ah,hu})
\[
T_{a}(x)=\frac{|x-a|^{2}a-(1-|a|^{2})(x-a)}{1-2x\cdot
a+|x|^{2}|a|^{2}},
\]
where $x\cdot a=x_{1}a_{1}+x_{2}a_{2}+\cdots +x_{n}a_{n}$ denotes
the  scalar product in $\mathbb{R}^{n}$.
Using the M\"obius transformations,  the associated
distance  from $x$ to $y$ in $\mathbb{B}^{n}$ is
\begin{equation*}
\rho(x,y)=\rho(T_{x}(y))=\rho(T_{y}(x)).
\end{equation*}
Also using the M\"obius transformations,  we can define the convolution of
measurable functions $f$ and $g$ on $\mathbb{B}^{n}$ by (see e.g. \cite{liup})
\begin{equation}\label{2.1}
(f\ast g)(x)=\int_{\mathbb{B}^{n}}f(y)g(T_{x}(y))dV_{y}
\end{equation}
provided this integral exists. It is easy to check that
\[
f\ast g= g\ast f.
\]
Furthermore, if $g$ is radial, i.e. $g=g(\rho)$, then (see e.g. \cite{liup})
\begin{equation}\label{2.2}
  (f\ast g)\ast h= f\ast (g\ast h)
\end{equation}
provided $f,g,h\in L^{1}(\mathbb{B}^{n})$

Denote by $e^{t\Delta_{\mathbb{H}}}$ the heat kernel on $\mathbb{B}^{n}$. It is well known that $e^{t\Delta_{\mathbb{H}}}$ depends only on $t$ and $\rho(x,y)$. In fact, $e^{t\Delta_{\mathbb{H}}}$  is given explicitly by the following formulas (see e.g. \cite{d,gn}):

\begin{itemize}
  \item If $n=2m$, then
\[
e^{t\Delta_{\mathbb{H}}}=(2\pi)^{-\frac{n+1}{2}}t^{-\frac{1}{2}}e^{-\frac{(n-1)^{2}}{4}t}
\int^{+\infty}_{\rho}\frac{\sinh r}{\sqrt{\cosh r-\cosh\rho}}\left(-\frac{1}{\sinh r}\frac{\partial}{\partial r}\right)^{m}e^{-\frac{r^{2}}{4t}}dr;
\]
  \item If $n=2m+1$, then
  \[
  e^{t\Delta_{\mathbb{H}}}=2^{-m-1}\pi^{-m-1/2}t^{-\frac{1}{2}}e^{-\frac{(n-1)^{2}}{4}t}\left(-\frac{1}{\sinh \rho}\frac{\partial}{\partial \rho}\right)^{m}e^{-\frac{\rho^{2}}{4t}}.
  \]
\end{itemize}

 Finally, we review some basic facts about Fourier transform and fractional Laplacian  on hyperbolic space.
Set
\[
e_{\lambda,\zeta}(x)=\left(\frac{\sqrt{1-|x|^{2}}}{|x-\zeta|}\right)^{n-1+i\lambda}, \;\; x\in \mathbb{B}^{n},\;\;\lambda\in\mathbb{R},\;\;\zeta\in\mathbb{S}^{n-1}.
\]
The Fourier transform of a function  $f$  on $\mathbb{B}^{n}$ can be defined as
\[
\widehat{f}(\lambda,\zeta)=\int_{\mathbb{B}^{n}} f(x)e_{-\lambda,\zeta}(x)dV
\]
provided this integral exists. If $g\in C^{\infty}_{0}(\mathbb{B}^{n})$ is radial, then
 $$\widehat{(f\ast g)}=\widehat{f}\cdot\widehat{g}.$$
Moreover,
the following inversion formula holds for $f\in C^{\infty}_{0}(\mathbb{B}^{n})$ (see  \cite{liup}):
\[
f(x)=D_{n}\int^{+\infty}_{-\infty}\int_{\mathbb{S}^{n-1}} \widehat{f}(\lambda,\zeta)e_{\lambda,\zeta}(x)|\mathfrak{c}(\lambda)|^{-2}d\lambda d\sigma(\varsigma),
\]
where $D_{n}=\frac{1}{2^{3-n}\pi |\mathbb{S}^{n-1}|}$ and $\mathfrak{c}(\lambda)$ is the  Harish-Chandra $\mathfrak{c}$-function given by (see  \cite{liup})
\[
\mathfrak{c}(\lambda)=\frac{2^{n-1-i\lambda}\Gamma(n/2)\Gamma(i\lambda)}{\Gamma(\frac{n-1+i\lambda}{2})\Gamma(\frac{1+i\lambda}{2})}.
\]
Similarly, there holds the Plancherel formula:
\[
\int_{\mathbb{B}^{n}}|f(x)|^{2}dV=D_{n}\int^{+\infty}_{-\infty}\int_{\mathbb{S}^{n-1}}|\widehat{f}(\lambda,\zeta)|^{2}|\mathfrak{c}(\lambda)|^{-2}d\lambda d\sigma(\varsigma).
\]

Since $e_{\lambda,\zeta}(x)$ is an eigenfunction of $\Delta_{\mathbb{H}}$ with eigenvalue $-\frac{(n-1)^{2}+\lambda^{2}}{4}$, it is easy to check that, for
$f\in C^{\infty}_{0}(\mathbb{B}^{n})$,
\[
\widehat{\Delta_{\mathbb{H}}f}(\lambda,\zeta)=-\frac{(n-1)^{2}+\lambda^{2}}{4}\widehat{f}(\lambda,\zeta).
\]
Therefore, in analogy with the Euclidean setting, we define the fractional
Laplacian on hyperbolic space as follows:
\[
\widehat{(-\Delta_{\mathbb{H}})^{\gamma}f}(\lambda,\zeta)=\left(\frac{(n-1)^{2}+\lambda^{2}}{4}\right)^{\gamma}\widehat{f}(\lambda,\zeta),\;\;\gamma\in \mathbb{R}.
\]
For more information about fractional
Laplacian on hyperbolic space and and symmetric spaces, we refer to \cite{an,ba}.

\section{Sharp Estimates for the Green function and fractional power}
In the rest of paper, we shall fix $n=4$.
In what follows, $a \lesssim b$ will stand for $a\leq Cb$ with some positive absolute constant $C$.

Since the work of Adams \cite{ad}, it has been a standard approach to establish  sharp Trudinger-Moser and Adams inequalities in different settings including 
both Riemannian and sub-Riemannian settings such as on the Heisenberg groups  by using the sharp pointwise estimates of Green's functions 
together with using O'Neil's lemma of convolutions \cite{on}. We shall not go to the details here. To this end, we will derive the pointwise estimates for Green's functions of powers of Laplacians in the hyperbolic spaces to establish our Hardy-Adams inequalities. 

\medskip

Recall that
the heat kernel
\[
e^{t\Delta_{\mathbb{H}}}=(2\pi)^{-\frac{5}{2}}t^{-\frac{1}{2}}e^{-\frac{9}{4}t}
\int^{+\infty}_{\rho}\frac{\sinh r}{\sqrt{\cosh r-\cosh\rho}}\left(-\frac{1}{\sinh r}\frac{\partial}{\partial r}\right)^{2}e^{-\frac{r^{2}}{4t}}dr
\]
and the Mellin type expression on hyperbolic space (see e.g. \cite{anj}, Section 4.2)
$$(-\Delta_{\mathbb{H}}+\alpha)^{-\sigma}
=\frac{1}{\Gamma(\sigma)}\int^{+\infty}_{0}t^{\sigma-1}e^{t(\Delta_{\mathbb{H}}-\alpha)}dt,\;\;\;\;\alpha\geq-9/4,\;\;3/2>\sigma>0.$$
We have
\begin{equation*}
\begin{split}
(-\Delta_{\mathbb{H}}-9/4)^{-1}
=&\frac{1}{(\sqrt{2\pi})^{5}}\int^{+\infty}_{\rho}\frac{\sinh r}{\sqrt{\cosh r-\cosh\rho}}dr\int^{+\infty}_{0}t^{-\frac{1}{2}}\left(-\frac{1}{\sinh r}\frac{\partial}{\partial r}\right)^{2}e^{-\frac{r^{2}}{4t}}dt\\
=&\frac{1}{(\sqrt{2\pi})^{5}}\int^{+\infty}_{\rho}\frac{\sinh r}{\sqrt{\cosh r-\cosh\rho}}dr\left(-\frac{1}{\sinh r}\frac{\partial}{\partial r}\right)\int^{+\infty}_{0}\frac{r}{2\sinh r}t^{-\frac{3}{2}}e^{-\frac{r^{2}}{4t}}dt\\
=&\frac{1}{(\sqrt{2\pi})^{5}}\int^{+\infty}_{\rho}\frac{\sinh r}{\sqrt{\cosh r-\cosh\rho}}\left(-\frac{1}{\sinh r}\frac{\partial}{\partial r}\right)\frac{\Gamma(\frac{1}{2})}{\sinh r}dr\\
=&\frac{1}{4\sqrt{2}\pi^{2}}\int^{+\infty}_{\rho}\frac{\cosh r}{\sinh^{2}r\sqrt{\cosh r-\cosh\rho}}dr
\end{split}
\end{equation*}
Here we use the fact $\Gamma(1/2)=\sqrt{\pi}.$

\begin{lemma}\label{lm3.1}
There holds, for $\rho>0$,
\[
(-\Delta_{\mathbb{H}}-9/4)^{-1}\leq\frac{1}{4\pi^{2}\cosh\frac{\rho}{2}\sinh^{2}\rho}+\frac{1}{4\pi^{2}\cosh\frac{\rho}{2}\sinh\rho}.
\]
\end{lemma}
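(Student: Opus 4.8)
The plan is to estimate the integral
\[
I(\rho)=\int^{+\infty}_{\rho}\frac{\cosh r}{\sinh^{2}r\sqrt{\cosh r-\cosh\rho}}\,dr
\]
which, by the computation preceding the lemma, equals $4\sqrt{2}\pi^{2}(-\Delta_{\mathbb{H}}-9/4)^{-1}$. So the claim is equivalent to
\[
I(\rho)\leq \frac{\sqrt{2}}{\cosh\frac{\rho}{2}\sinh^{2}\rho}+\frac{\sqrt{2}}{\cosh\frac{\rho}{2}\sinh\rho}.
\]
First I would perform the substitution that linearizes the square-root singularity. A natural choice is $s=\cosh r-\cosh\rho$ (so $ds=\sinh r\,dr$), or, to keep things clean near $r=\rho$, the substitution $\cosh r=\cosh\rho+\tfrac{t^{2}}{2}$ of the type used in the heat-kernel formula; either turns the integrand into something of the form (rational in $\cosh r$)$\,\times\,t^{-?}$ against $dt$. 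The point is that after such a change of variables the $1/\sqrt{\cosh r-\cosh\rho}$ is absorbed and one is left with an elementary, manifestly convergent integral whose value can be bounded by splitting $\cosh r/\sinh^{2}r = 1/\sinh^{2}r + 1/\cosh^2 r \cdot (\cdots)$ — more precisely by writing $\cosh r/\sinh^2 r$ in terms that reveal the two model pieces $1/\sinh^2\rho$ and $1/\sinh\rho$ appearing on the right-hand side.

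The key steps, in order, are: (i) record the exact identity $(-\Delta_{\mathbb{H}}-9/4)^{-1}=\frac{1}{4\sqrt 2\pi^{2}}I(\rho)$ already derived; (ii) change variables in $I(\rho)$ to remove the square-root singularity at $r=\rho$; (iii) bound the resulting integrand from above using monotonicity of $r\mapsto \cosh r/\sinh^2 r$ and the elementary inequality $\cosh r - \cosh\rho \geq c\,(\cosh\tfrac{r}{2})(\cdots)$ type estimates, or more simply by bounding $\sinh^{2} r \geq \sinh^{2}\rho$ and $\sinh r\geq \sinh\rho$ on $[\rho,\infty)$ to pull the decaying factor out; (iv) evaluate the remaining one-dimensional integral $\int_{\rho}^{\infty}\frac{\cosh r\,dr}{\sqrt{\cosh r-\cosh\rho}}$ (which diverges) — so in fact one must be more careful and keep \emph{one} power of the decay inside, splitting $\frac{\cosh r}{\sinh^{2}r}=\frac{1}{\sinh r}\cdot\frac{\cosh r}{\sinh r}$ and using $\frac{\cosh r}{\sinh r}\leq \frac{\cosh\rho}{\sinh\rho}+1$ for $r\geq\rho$ together with $\frac{1}{\sinh r}\le \frac{1}{2\sinh\frac r2\cosh\frac r2}$; (v) compute the surviving integral $\int_{\rho}^{\infty}\frac{dr}{\sinh r\sqrt{\cosh r - \cosh\rho}}$, which is elementary (substitute $u=\cosh r$, then a further trig substitution) and evaluates in closed form to something comparable to $\frac{1}{\cosh\frac{\rho}{2}}$ times a bounded constant; (vi) combine and match constants so that the two output terms $\frac{1}{4\pi^{2}\cosh\frac\rho2\sinh^{2}\rho}$ and $\frac{1}{4\pi^{2}\cosh\frac\rho2\sinh\rho}$ emerge, checking the numerical factor $\frac{1}{4\sqrt2\pi^2}\cdot(\text{const})\le\frac1{4\pi^2}$.

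The main obstacle is step (iii)--(v): the integral $I(\rho)$ has a non-integrable-looking factor $\cosh r$ against only $1/\sqrt{\cosh r-\cosh\rho}$, so a naive bound of $\cosh r/\sinh^2 r$ by its value at $r=\rho$ produces a divergent tail integral. One must therefore distribute the decay carefully — keeping a full factor $1/\sinh r$ (or a fractional power of it) inside the integral to guarantee convergence, and only extracting the rest. Getting the algebra of this splitting to land exactly on the two stated terms, with the correct constant $\tfrac{1}{4\pi^2}$ rather than something slightly worse, will require choosing the split point and the elementary inequalities $\frac{\cosh r}{\sinh r}\le 1+\coth\rho$, $\frac{1}{\sinh r}\le \frac{1}{\sinh\rho}$, and the exact value of $\int_{\rho}^{\infty}\frac{dr}{\sinh r\sqrt{\cosh r-\cosh\rho}}=\frac{\pi}{\sqrt 2}\cdot\frac{1}{\sqrt{\cosh\rho+1}}$ (up to verifying this closed form) judiciously; this bookkeeping, rather than any conceptual difficulty, is where the work lies.
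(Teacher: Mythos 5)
Your overall scaffolding (the substitution $t=\sqrt{\cosh r-\cosh\rho}$, splitting the integrand, pulling a factor out at $r=\rho$ and computing what remains) is the right genre, but the specific split you propose does not reach the sharp constant, and one of your claimed closed forms is false.

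The concrete problem with step (iii)--(v): if you write $\frac{\cosh r}{\sinh^{2}r}=\frac{1}{\sinh r}\cdot\coth r$, bound $\coth r\leq\coth\rho$ for $r\geq\rho$, and then estimate the surviving integral $J(\rho)=\int_{\rho}^{\infty}\frac{dr}{\sinh r\sqrt{\cosh r-\cosh\rho}}$, you overshoot. Under the substitution, $J(\rho)=2\int_{0}^{\infty}\frac{dt}{(t^{2}+\cosh\rho-1)(t^{2}+\cosh\rho+1)}=\frac{\pi}{\sinh\rho\,(\sqrt{\cosh\rho-1}+\sqrt{\cosh\rho+1})}$; in particular it diverges like $\frac{\pi}{\sqrt2\,\rho}$ as $\rho\to0^{+}$, so your stated closed form $\frac{\pi}{\sqrt2\sqrt{\cosh\rho+1}}$ (which stays bounded) is incorrect. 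With the correct value, the bound $\coth\rho\cdot J(\rho)$ behaves like $\frac{\pi}{\sqrt2\,\rho^{2}}$ for small $\rho$, and multiplying by $\frac{1}{4\sqrt2\pi^{2}}$ gives $\frac{1}{8\pi\rho^{2}}$, which is strictly larger than the target $\frac{1}{4\pi^{2}\rho^{2}}$ (since $\pi>2$). So this split cannot prove the lemma with the stated constant. The inequality is sharp at $\rho\to0$ and leaves no slack to waste a factor of $\pi/2$.

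The paper's split is different and is precisely what makes the constant come out right: after the substitution, write the numerator as $t^{2}+\cosh\rho=(t^{2}+\cosh\rho-1)+1$, i.e.\ decompose $\frac{\cosh r}{\sinh^{2}r}=\frac{1}{\cosh r+1}+\frac{1}{\sinh^{2}r}$. This produces
\begin{equation*}
\frac{1}{2\sqrt2\pi^{2}}\int_{0}^{\infty}\frac{dt}{(t^{2}+\cosh\rho+1)^{3/2}(t^{2}+\cosh\rho-1)^{1/2}}
+\frac{1}{2\sqrt2\pi^{2}}\int_{0}^{\infty}\frac{dt}{(t^{2}+\cosh\rho+1)^{3/2}(t^{2}+\cosh\rho-1)^{3/2}},
\end{equation*}
and in each piece one pulls out the factor evaluated at $t=0$ that leaves a \emph{finite and exactly computable} remainder: in the first, freeze $(t^{2}+\cosh\rho-1)^{-1/2}\leq(\cosh\rho-1)^{-1/2}$ and use $\int_{0}^{\infty}(t^{2}+a)^{-3/2}\,dt=1/a$; in the second, freeze $(t^{2}+\cosh\rho+1)^{-3/2}\leq(\cosh\rho+1)^{-3/2}$ and use the same formula. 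That choice of which factor to freeze in each term is the missing idea; it is what yields $\frac{1}{4\pi^{2}\cosh\frac\rho2\sinh\rho}$ and $\frac{1}{4\pi^{2}\cosh\frac\rho2\sinh^{2}\rho}$ on the nose, with no lost constants, at both small and large $\rho$.
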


\begin{proof}
Using the substitution $t=\sqrt{\cosh r-\cosh\rho}$, we have
\begin{equation*}
\begin{split}
(-\Delta_{\mathbb{H}}-9/4)^{-1}
=&\frac{1}{4\sqrt{2}\pi^{2}}\int^{+\infty}_{\rho}\frac{\cosh r}{\sinh^{2}r\sqrt{\cosh r-\cosh\rho}}dr\\
=&\frac{1}{2\sqrt{2}\pi^{2}}\int^{+\infty}_{0}\frac{t^{2}+\cosh \rho}{[(t^{2}+\cosh \rho)^{2}-1]^{\frac{3}{2}}}dt\\
=&\frac{1}{2\sqrt{2}\pi^{2}}\int^{+\infty}_{0}\frac{t^{2}+\cosh \rho-1}{[(t^{2}+\cosh \rho)^{2}-1]^{\frac{3}{2}}}dt+\frac{1}{2\sqrt{2}\pi^{2}}\int^{+\infty}_{0}\frac{1}{[(t^{2}+\cosh \rho)^{2}-1]^{\frac{3}{2}}}dt\\
=&:(I)+(II)
\end{split}
\end{equation*}
where
\begin{equation*}
\begin{split}
(I)=&\frac{1}{2\sqrt{2}\pi^{2}}\int^{+\infty}_{0}\frac{t^{2}+\cosh \rho-1}{[(t^{2}+\cosh \rho)^{2}-1]^{\frac{3}{2}}}dt\\
=&\frac{1}{2\sqrt{2}\pi^{2}}\int^{+\infty}_{0}\frac{1}{(t^{2}+\cosh \rho+1)^{\frac{3}{2}}(t^{2}+\cosh \rho-1)^{\frac{1}{2}}}dt\\
\leq&\frac{1}{2\sqrt{2}\pi^{2}}\frac{1}{\sqrt{\cosh \rho-1}}\int^{+\infty}_{0}\frac{1}{(t^{2}+\cosh \rho+1)^{\frac{3}{2}}}dt\\
=&\frac{1}{2\sqrt{2}\pi^{2}}\frac{1}{\sqrt{\cosh \rho-1}}\cdot\left.\frac{1}{\cosh \rho+1}\frac{t}{\sqrt{t^{2}+\cosh \rho+1}}\right|^{\infty}_{0}\\
=&\frac{1}{2\sqrt{2}\pi^{2}}\frac{1}{\sinh\rho\sqrt{\cosh \rho+1}}=\frac{1}{4\pi^{2}\cosh\frac{\rho}{2}\sinh\rho};
\end{split}
\end{equation*}
\begin{equation*}
\begin{split}
(II)=&\frac{1}{2\sqrt{2}\pi^{2}}\int^{+\infty}_{0}\frac{1}{(t^{2}+\cosh \rho+1)^{\frac{3}{2}}(t^{2}+\cosh \rho-1)^{\frac{3}{2}}}dt\\
\leq&\frac{1}{2\sqrt{2}\pi^{2}}\frac{1}{(\cosh \rho+1)\frac{3}{2}}\int^{+\infty}_{0}\frac{1}{(t^{2}+\cosh \rho-1)^{\frac{3}{2}}}dt\\
=&\frac{1}{2\sqrt{2}\pi^{2}}\frac{1}{(\cosh \rho+1)\frac{3}{2}}\cdot\left.\frac{1}{\cosh \rho-1}\frac{t}{\sqrt{t^{2}+\cosh \rho-1}}\right|^{\infty}_{0}\\
=&\frac{1}{2\sqrt{2}\pi^{2}}\frac{1}{\sinh^{2}\rho\sqrt{\cosh \rho+1}}=\frac{1}{4\pi^{2}\cosh\frac{\rho}{2}\sinh^{2}\rho}.
\end{split}
\end{equation*}
The desired result follows.
\end{proof}

Also via the heat kernel and  the Mellin type expression, the fractional power
\begin{equation}\label{b3.2}
\begin{split}
&(-\Delta_{\mathbb{H}}+\alpha)^{-\frac{1}{2}}=\frac{1}{\Gamma(1/2)}\int^{+\infty}_{0}t^{-\frac{1}{2}}e^{t(\Delta_{\mathbb{H}}-\alpha)}dt\\
=&\frac{1}{\sqrt{\pi}(\sqrt{2\pi})^{5}}\int^{+\infty}_{\rho}\frac{\sinh
r}{\sqrt{\cosh r-\cosh\rho}}dr\int^{+\infty}_{0}t^{-1}e^{-\alpha
t-\frac{9}{4}t}\left(-\frac{1}{\sinh r}\frac{\partial}{\partial
r}\right)^{2}e^{-\frac{r^{2}}{4t}}dt,
\end{split}
\end{equation}
where  $\alpha\geq-9/4$. It is easy to check that if $\alpha>-9/4$, then
\begin{equation}\label{3.1}
\begin{split}
(-\Delta_{\mathbb{H}}+\alpha)^{-\frac{1}{2}}\leq(-\Delta_{\mathbb{H}}-9/4)^{-\frac{1}{2}}.
\end{split}
\end{equation}
Furthermore, we have the following estimates of $(-\Delta_{\mathbb{H}}-9/4)^{-\frac{1}{2}}$.

\begin{lemma} \label{lm3.2} There holds, for $\rho>0$
\begin{equation*}
\begin{split}
(-\Delta_{\mathbb{H}}-9/4)^{-\frac{1}{2}}\leq&\frac{1}{16\pi^{2}(1+\cosh\rho)}\cdot\frac{1}{\sinh^{3}\frac{\rho}{2}}+\frac{\sqrt{2}}{4\pi^{2}\sqrt{1+\cosh\rho}}\cdot\frac{1}{\sinh\rho};\\
(-\Delta_{\mathbb{H}}-9/4)^{-\frac{1}{2}}\leq&\frac{8}{\sqrt{\pi}(\sqrt{2\pi})^{5}}\cdot\frac{1}{\rho\sqrt{\cosh\rho+1}(\cosh\rho-1)}.
\end{split}
\end{equation*}
\end{lemma}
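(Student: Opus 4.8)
The goal is to bound the radial kernel $K(\rho) := (-\Delta_{\mathbb{H}}-9/4)^{-\frac{1}{2}}$ from above by the two stated expressions, starting from the integral representation \eqref{b3.2} with $\alpha = -9/4$ (which kills the exponential factor $e^{-\alpha t - \frac{9}{4}t}$, making the $t$-integral elementary). First I would carry out the inner $t$-integration in \eqref{b3.2} exactly as in the computation preceding Lemma \ref{lm3.1}: pull the two derivatives $\bigl(-\frac{1}{\sinh r}\frac{\partial}{\partial r}\bigr)^2$ outside where possible and use $\int_0^{+\infty} t^{-1} e^{-r^2/(4t)}\,dt$ — but note this integral diverges, so one must instead apply the derivatives first. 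Writing $\bigl(-\frac{1}{\sinh r}\partial_r\bigr)e^{-r^2/(4t)} = \frac{r}{2t\sinh r}e^{-r^2/(4t)}$ and then integrating $\int_0^{+\infty} t^{-2} \cdot \tfrac{r}{2\sinh r} e^{-r^2/(4t)}\,dt$, which converges and equals a constant times $r^{-1}(\sinh r)^{-1}$ after the substitution $s = r^2/(4t)$; applying the second derivative $-\frac{1}{\sinh r}\partial_r$ to this yields a sum of terms of the form $(\sinh r)^{-3}$ and $\frac{\cosh r}{r(\sinh r)^3}$ type expressions. This reduces $K(\rho)$ to an explicit one-dimensional integral $\int_\rho^{+\infty} \frac{\sinh r}{\sqrt{\cosh r - \cosh\rho}} \cdot (\text{explicit function of } r)\, dr$.

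Next, as in the proof of Lemma \ref{lm3.1}, I would substitute $t = \sqrt{\cosh r - \cosh\rho}$ so that $\sinh r\, dr = 2t\sqrt{t^2 + \cosh\rho}\,\bigl/\sqrt{\cdots}$ — more precisely this change of variables turns $\cosh r = t^2 + \cosh\rho$ and the integral becomes $\int_0^{+\infty}$ of a rational-algebraic function of $t$ involving $(t^2+\cosh\rho)^2 - 1 = (t^2 + \cosh\rho + 1)(t^2 + \cosh\rho - 1)$. For the first stated bound I would then, exactly in the style of the $(I)$ and $(II)$ estimates in Lemma \ref{lm3.1}, split off the worst power, bound $(t^2 + \cosh\rho - 1)^{-a}$ or $(t^2+\cosh\rho+1)^{-b}$ by its value at $t=0$ in the "good" factor, and evaluate the remaining elementary integral $\int_0^{+\infty}(t^2 + c)^{-3/2}\,dt = c^{-1}\cdot[\,t(t^2+c)^{-1/2}\,]_0^\infty = c^{-1}$ or the analogous $(t^2+c)^{-5/2}$ integral. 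Using $\cosh\rho - 1 = 2\sinh^2\frac{\rho}{2}$, $\cosh\rho + 1 = 2\cosh^2\frac{\rho}{2}$, $\sinh\rho = 2\sinh\frac{\rho}{2}\cosh\frac{\rho}{2}$ one repackages the output into the claimed form with $\sinh^3\frac{\rho}{2}$ and $\frac{1}{\sqrt{1+\cosh\rho}\,\sinh\rho}$.

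For the second bound — the one with the factor $\frac{1}{\rho}$, which is the one that captures the genuine logarithmic-type singularity needed later for the Adams inequality — I would keep the factor of $r^{-1}$ that appears from integrating against $t^{-2}$ and $\partial_r$: since $r \geq \rho$ throughout the domain of integration, I can bound $r^{-1} \leq \rho^{-1}$ and pull it out, leaving $\frac{1}{\rho}\int_\rho^{+\infty}\frac{\sinh r}{\sqrt{\cosh r-\cosh\rho}}\cdot\frac{\cosh r}{\sinh^2 r}\,dr$ (or a closely related integrand), which is precisely — up to the constant $\frac{1}{4\sqrt2\pi^2}$ versus $\frac{8}{\sqrt\pi(\sqrt{2\pi})^5}$, and these agree — the integral that appeared for $(-\Delta_{\mathbb{H}}-9/4)^{-1}$ in the display before Lemma \ref{lm3.1} and was bounded there. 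Hence the second estimate follows from Lemma \ref{lm3.1} itself together with $r^{-1} \leq \rho^{-1}$, and $\frac{1}{\cosh\frac{\rho}{2}\sinh^2\rho} + \frac{1}{\cosh\frac{\rho}{2}\sinh\rho} \lesssim \frac{1}{\sqrt{\cosh\rho+1}(\cosh\rho-1)}$ for all $\rho > 0$.

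**Main obstacle.** The delicate point is correctly tracking which derivative $-\frac{1}{\sinh r}\partial_r$ acts before the $t$-integral is performed: interchanging them naively produces a divergent $\int_0^\infty t^{-1}\,dt$, so the order of operations must be chosen so that every one-dimensional integral that appears is convergent, and then one must verify that the resulting integrand is genuinely dominated by the elementary majorants (value-at-$t=0$ trick) uniformly in $\rho > 0$, including checking that no constant is lost in the hyperbolic-function identities. The second (small-$\rho$-sharp) bound additionally requires care that the $r^{-1} \le \rho^{-1}$ replacement is legitimate at the level of the integrand produced by the derivatives — i.e. that the $r^{-1}$ is not in fact an $r^{-1}$ multiplying something that changes sign — so I would write out the two derivatives explicitly rather than estimating prematurely.
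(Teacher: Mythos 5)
Your approach is essentially the one taken in the paper: insert the $\alpha=-9/4$ case of the Mellin representation, apply the two radial derivatives to $e^{-r^2/(4t)}$ \emph{before} integrating in $t$ (you correctly identify that naively pulling the derivatives outside produces the divergent $\int_0^\infty t^{-1}\,dt$), then substitute $t=\sqrt{\cosh r-\cosh\rho}$ and estimate elementarily. Two points are worth flagging.

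First, after the inner $t$-integration the paper obtains exactly
$\frac{2}{r^{2}\sinh^{2}r}+\frac{2\cosh r}{r\sinh^{3}r}$
(not ``$(\sinh r)^{-3}$'' in the first term), and --- crucially --- the resulting integrand still contains the factors $r^{-1}$ and $r^{-2}$, so the substitution $t=\sqrt{\cosh r-\cosh\rho}$ does \emph{not} yet yield a rational--algebraic function of $t$ as you assert. The missing ingredient, which the paper uses explicitly and which your write-up leaves implicit, is the pair of inequalities $\frac{1}{r}\leq\coth r$ (to dominate $\frac{1}{r^2\sinh r}$ by $\frac{\cosh r}{r\sinh^2 r}$) and then $\frac{1}{r}\leq\frac{1}{2}\coth\frac{r}{2}=\frac{1+\cosh r}{2\sinh r}$ (to eliminate the remaining $r^{-1}$). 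Only after these replacements does the integrand become a function of $\cosh r$ alone, so the substitution closes. Without stating these bounds your first estimate does not actually reduce to the $(III)/(IV)$ split; with them it does, and the computation matches the paper's verbatim.

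Second, for the small-$\rho$-sharp bound you propose a genuine shortcut: after $\frac1r\leq\frac1\rho$ you observe that the remaining integral is precisely the one representing $(-\Delta_{\mathbb H}-9/4)^{-1}$, so the second inequality follows from Lemma \ref{lm3.1} plus the elementary comparison $\frac{1}{\cosh\frac{\rho}{2}\sinh^{2}\rho}+\frac{1}{\cosh\frac{\rho}{2}\sinh\rho}\lesssim\frac{1}{\sqrt{\cosh\rho+1}(\cosh\rho-1)}$ (true, with constant $\tfrac{3}{\sqrt2}$, via $\cosh\rho+1=2\cosh^2\tfrac\rho2$, $\cosh\rho-1=2\sinh^2\tfrac\rho2$ and $\tanh\leq1$). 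This is a clean alternative to the paper's direct re-evaluation of $\int_0^\infty\frac{t^2+\cosh\rho}{[(t^2+\cosh\rho)^2-1]^{3/2}}\,dt$, but note your parenthetical ``and these agree'' is false: $\frac{1}{4\sqrt2\pi^2}\neq\frac{8}{\sqrt\pi(\sqrt{2\pi})^5}=\frac{\sqrt2}{\pi^3}$, and in fact your route produces the constant $\frac{3\sqrt2}{2\pi^3}$ rather than the paper's $\frac{\sqrt2}{\pi^3}$. This discrepancy is harmless --- everything downstream (Lemma \ref{lm4.2}, Lemma \ref{lm4.3}) uses the second estimate only up to an unspecified constant --- but as written your argument does not establish the literal numerical constant stated in the lemma.
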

\begin{proof}
By (\ref{b3.2}),
\[
(-\Delta_{\mathbb{H}}-9/4)^{-\frac{1}{2}}=\frac{1}{\sqrt{\pi}(\sqrt{2\pi})^{5}}\int^{+\infty}_{\rho}\frac{\sinh
r}{\sqrt{\cosh r-\cosh\rho}}dr\int^{+\infty}_{0}t^{-1}\left(-\frac{1}{\sinh r}\frac{\partial}{\partial
r}\right)^{2}e^{-\frac{r^{2}}{4t}}dt.
\]
Notice that, for $r>0$,
\begin{equation*}
\begin{split}
\int^{+\infty}_{0}t^{-1}\left(-\frac{1}{\sinh r}\frac{\partial}{\partial r}\right)^{2}e^{-\frac{r^{2}}{4t}}dt
=&\int^{+\infty}_{0}\left(-\frac{1}{\sinh r}\frac{\partial}{\partial r}\right)\left(\frac{r}{\sinh r}\cdot\frac{1}{2t^{2}}e^{-\frac{r^{2}}{4t}}\right)dt\\
=&\left(-\frac{1}{\sinh r}\frac{\partial}{\partial r}\right)\left(\frac{r}{2\sinh r}\cdot\int^{+\infty}_{0}t^{-2}e^{-\frac{r^{2}}{4t}}dt\right)\\
=&\left(-\frac{1}{\sinh r}\frac{\partial}{\partial r}\right)\frac{2}{r\sinh r}=\frac{2}{r^{2}\sinh^{2} r}+2\frac{\cosh r}{r\sinh^{3}r},
\end{split}
\end{equation*}
we have,
\begin{equation}\label{b3.1}
\begin{split}
(-\Delta_{\mathbb{H}}-9/4)^{-\frac{1}{2}}
=&\frac{2}{\sqrt{\pi}(\sqrt{2\pi})^{5}}\int^{+\infty}_{\rho}\frac{1}{\sqrt{\cosh r-\cosh\rho}}\left(\frac{1}{r^{2}\sinh r}+\frac{\cosh r}{r\sinh^{2}r}\right)dr\\
\leq&\frac{2}{\sqrt{\pi}(\sqrt{2\pi})^{5}}\int^{+\infty}_{\rho}\frac{1}{\sqrt{\cosh r-\cosh\rho}}\left(\frac{\cosh r}{r\sinh^{2}r}+\frac{\cosh r}{r\sinh^{2}r}\right)dr\\
=&\frac{4}{\sqrt{\pi}(\sqrt{2\pi})^{5}}\int^{+\infty}_{\rho}\frac{\cosh r}{r\sinh^{2}r\sqrt{\cosh r-\cosh\rho}}dr.
\end{split}
\end{equation}
To get the first inequality in (\ref{b3.1}), we use the inequality $\frac{1}{r}\leq \coth r (r>0)$. Therefore,
\begin{equation*}
\begin{split}
(-\Delta_{\mathbb{H}}-9/4)^{-\frac{1}{2}}
\leq&\frac{4}{\sqrt{\pi}(\sqrt{2\pi})^{5}}\int^{+\infty}_{\rho}\frac{\cosh r}{r\sinh^{2}r\sqrt{\cosh r-\cosh\rho}}dr\\
\leq&\frac{2}{\sqrt{\pi}(\sqrt{2\pi})^{5}}\int^{+\infty}_{\rho}\frac{\cosh r(\cosh r+1)}{\sinh^{3}r\sqrt{\cosh r-\cosh\rho}}dr.
\end{split}
\end{equation*}
Here we use the fact $\frac{1}{r}\leq \frac{1}{2}\coth\frac{r}{2}=\frac{1+\cosh r}{2\sinh r} (r>0)$.
Using the substitution $t=\sqrt{\cosh r-\cosh\rho}$ yields
\begin{equation*}
\begin{split}
(-\Delta_{\mathbb{H}}-9/4)^{-\frac{1}{2}}\leq&\frac{2}{\sqrt{\pi}(\sqrt{2\pi})^{5}}\int^{+\infty}_{\rho}\frac{\cosh r(\cosh r+1)}{\sinh^{3}r\sqrt{\cosh r-\cosh\rho}}dr\\
=&\frac{2}{\sqrt{\pi}(\sqrt{2\pi})^{5}}\int^{+\infty}_{\rho}\frac{\cosh r}{\sinh r(\cosh r-1)\sqrt{\cosh r-\cosh\rho}}dr\\
=&\frac{4}{\sqrt{\pi}(\sqrt{2\pi})^{5}}\int^{+\infty}_{0}\frac{t^{2}+\cosh\rho}{(t^{2}+\cosh\rho+1)(t^{2}+\cosh\rho -1)^{2}}dt\\
=&\frac{4}{\sqrt{\pi}(\sqrt{2\pi})^{5}}\int^{+\infty}_{0}\frac{t^{2}+\cosh\rho-1}{(t^{2}+\cosh\rho+1)(t^{2}+\cosh\rho -1)^{2}}dt\\
&+\frac{4}{\sqrt{\pi}(\sqrt{2\pi})^{5}}\int^{+\infty}_{0}\frac{1}{(t^{2}+\cosh\rho+1)(t^{2}+\cosh\rho -1)^{2}}dt\\
=&:(III)+(IV),
\end{split}
\end{equation*}
where
\begin{equation*}
\begin{split}
(III)=&\frac{4}{\sqrt{\pi}(\sqrt{2\pi})^{5}}\int^{+\infty}_{0}\frac{t^{2}+\cosh\rho-1}{(t^{2}+\cosh\rho+1)(t^{2}+\cosh\rho -1)^{2}}dt\\
=&\frac{4}{\sqrt{\pi}(\sqrt{2\pi})^{5}}\int^{+\infty}_{0}\frac{1}{(t^{2}+\cosh\rho+1)(t^{2}+\cosh\rho -1)}dt\\
\leq&\frac{4}{\sqrt{\pi}(\sqrt{2\pi})^{5}}\frac{1}{1+\cosh\rho}\cdot\int^{+\infty}_{0}\frac{1}{t^{2}+\cosh\rho -1}dt\\
=&\frac{4}{\sqrt{\pi}(\sqrt{2\pi})^{5}}\frac{1}{1+\cosh\rho}\cdot\frac{1}{\sqrt{\cosh\rho-1}}\frac{\pi}{2}
\\=&\frac{\sqrt{2}}{4\pi^{2}\sqrt{1+\cosh\rho}}\cdot\frac{1}{\sinh\rho};
\\
(IV)=&\frac{4}{\sqrt{\pi}(\sqrt{2\pi})^{5}}\int^{+\infty}_{0}\frac{1}{(t^{2}+\cosh\rho+1)(t^{2}+\cosh\rho -1)^{2}}dt\\
\leq&\frac{4}{\sqrt{\pi}(\sqrt{2\pi})^{5}}\frac{1}{(1+\cosh\rho)}\int^{+\infty}_{0}\frac{1}{(t^{2}+\cosh\rho -1)^{2}}dt\\
=&\frac{4}{\sqrt{\pi}(\sqrt{2\pi})^{5}}\frac{1}{(1+\cosh\rho)}\frac{1}{(\cosh\rho-1)^{3/2}} \frac{\pi}{4}\\
=&\frac{1}{16\pi^{2}(1+\cosh\rho)}\cdot\frac{1}{\sinh^{3}\frac{\rho}{2}}.
\end{split}
\end{equation*}
Thus,
\begin{equation*}
\begin{split}
(-\Delta_{\mathbb{H}}-9/4)^{-\frac{1}{2}}=&(III)+(IV)\leq\frac{1}{16\pi^{2}(1+\cosh\rho)}\cdot\frac{1}{\sinh^{3}\frac{\rho}{2}}+\frac{\sqrt{2}}{4\pi^{2}\sqrt{1+\cosh\rho}}\cdot\frac{1}{\sinh\rho}.
\end{split}
\end{equation*}

On the other hand, for $\rho>0$, we have, by (\ref{b3.1}),
\begin{equation*}
\begin{split}
(-\Delta_{\mathbb{H}}-9/4)^{-\frac{1}{2}}
\leq&\frac{4}{\sqrt{\pi}(\sqrt{2\pi})^{5}}\int^{+\infty}_{\rho}\frac{\cosh r}{r\sinh^{2}r\sqrt{\cosh r-\cosh\rho}}dr\\
\leq&\frac{4}{\sqrt{\pi}(\sqrt{2\pi})^{5}\rho}\int^{+\infty}_{\rho}\frac{\cosh r}{\sinh^{2}r\sqrt{\cosh r-\cosh\rho}}dr.
\end{split}
\end{equation*}
Also using the substitution $t=\sqrt{\cosh r-\cosh\rho}$, we have
\begin{equation*}
\begin{split}
(-\Delta_{\mathbb{H}}-9/4)^{-\frac{1}{2}}
\leq&\frac{8}{\sqrt{\pi}(\sqrt{2\pi})^{5}\rho}\int^{+\infty}_{0}\frac{t^{2}+\cosh \rho}{[(t^{2}+\cosh \rho)^{2}-1]^{\frac{3}{2}}}dt\\
\leq&\frac{8}{\sqrt{\pi}(\sqrt{2\pi})^{5}\rho}\int^{+\infty}_{0}\frac{1}{(t^{2}+\cosh \rho-1)^{\frac{1}{2}}(t^{2}+\cosh \rho-1)^{\frac{3}{2}}}dt\\
\leq&\frac{8}{\sqrt{\pi}(\sqrt{2\pi})^{5}\rho}\cdot\frac{1}{\sqrt{\cosh\rho+1}}\int^{+\infty}_{0}\frac{1}{(t^{2}+\cosh \rho-1)^{\frac{3}{2}}}dt\\
=&\frac{8}{\sqrt{\pi}(\sqrt{2\pi})^{5}}\cdot\frac{1}{\rho\sqrt{\cosh\rho+1}(\cosh\rho-1)}.
\end{split}
\end{equation*}
The proof of Lemma \ref{lm3.2} is then completed.
\end{proof}

\begin{corollary} \label{c1} There holds, for $\rho>0$,
\begin{equation*}
\begin{split}
(-\Delta_{\mathbb{H}}-9/4)^{-\frac{1}{2}}\leq&\frac{1}{4\pi^{2}\sinh^{3}\rho}+\left(\frac{1}{8\pi^{2}\cosh\rho\cosh\rho/2}+\frac{\sqrt{2}}{4\pi^{2}\sqrt{1+\cosh\rho}}\right)\frac{1}{\sinh\rho}
\end{split}
\end{equation*}
and
\begin{equation*}
\begin{split}
(-\Delta_{\mathbb{H}}-9/4)^{-\frac{1}{2}}\lesssim\rho^{-1}e^{-\frac{3}{2}\rho},\;\;\;\;\rho>1.
\end{split}
\end{equation*}
\end{corollary}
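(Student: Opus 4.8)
The plan is to deduce both inequalities directly from Lemma~\ref{lm3.2} by elementary manipulations with hyperbolic functions; no new analytic ingredient is needed.

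For the first inequality I would start from the first bound of Lemma~\ref{lm3.2},
\[
(-\Delta_{\mathbb{H}}-9/4)^{-\frac12}\leq\frac{1}{16\pi^{2}(1+\cosh\rho)}\cdot\frac{1}{\sinh^{3}\frac{\rho}{2}}+\frac{\sqrt{2}}{4\pi^{2}\sqrt{1+\cosh\rho}}\cdot\frac{1}{\sinh\rho}.
\]
The second summand is already one of the terms in the statement, so only the first summand has to be reshaped. Using the half-angle identities $1+\cosh\rho=2\cosh^{2}\frac{\rho}{2}$ and $\sinh\rho=2\sinh\frac{\rho}{2}\cosh\frac{\rho}{2}$, which give $\sinh^{3}\frac{\rho}{2}=\sinh^{3}\rho/(8\cosh^{3}\frac{\rho}{2})$, the first summand equals exactly $\dfrac{\cosh\frac{\rho}{2}}{4\pi^{2}\sinh^{3}\rho}$. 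Splitting $\cosh\frac{\rho}{2}=1+(\cosh\frac{\rho}{2}-1)$ produces the leading term $\frac{1}{4\pi^{2}\sinh^{3}\rho}$ of the corollary plus a remainder $\frac{\cosh\frac{\rho}{2}-1}{4\pi^{2}\sinh^{3}\rho}$, and it then suffices to verify
\[
\frac{\cosh\frac{\rho}{2}-1}{4\pi^{2}\sinh^{3}\rho}\leq\frac{1}{8\pi^{2}\cosh\rho\cosh\frac{\rho}{2}}\cdot\frac{1}{\sinh\rho}.
\]
Clearing denominators and using $\cosh\frac{\rho}{2}-1=\sinh^{2}\frac{\rho}{2}/(\cosh\frac{\rho}{2}+1)$ together with $\sinh^{2}\rho=4\sinh^{2}\frac{\rho}{2}\cosh^{2}\frac{\rho}{2}$, this inequality simplifies, after cancelling the common factor $2\sinh^{2}\frac{\rho}{2}\cosh\frac{\rho}{2}$, to $\cosh\rho\leq2\cosh\frac{\rho}{2}(\cosh\frac{\rho}{2}+1)$, i.e., since $\cosh\rho=2\cosh^{2}\frac{\rho}{2}-1$, to the trivial statement $-1\leq2\cosh\frac{\rho}{2}$.

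For the second inequality I would instead invoke the \emph{second} bound of Lemma~\ref{lm3.2},
\[
(-\Delta_{\mathbb{H}}-9/4)^{-\frac12}\leq\frac{8}{\sqrt{\pi}(\sqrt{2\pi})^{5}}\cdot\frac{1}{\rho\sqrt{\cosh\rho+1}\,(\cosh\rho-1)},
\]
because the decay factor $\rho^{-1}$ in the target comes precisely from there; the first inequality alone yields only $e^{-3\rho/2}$ without the extra $\rho^{-1}$. For $\rho>1$ one has $\cosh\rho\geq\tfrac12 e^{\rho}$, hence $\cosh\rho-1\gtrsim e^{\rho}$ and $\sqrt{\cosh\rho+1}\gtrsim e^{\rho/2}$; inserting these lower bounds gives $(-\Delta_{\mathbb{H}}-9/4)^{-\frac12}\lesssim\rho^{-1}e^{-\rho/2}e^{-\rho}=\rho^{-1}e^{-3\rho/2}$, as claimed.

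The whole argument is routine; the only mildly delicate point is the algebra matching the first summand of Lemma~\ref{lm3.2} to the exact constant $\frac{1}{4\pi^{2}\sinh^{3}\rho}$ appearing in the statement and confirming that the leftover is absorbed by the stated cross term — which, as above, reduces to a triviality.
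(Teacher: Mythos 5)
Your proof is correct and follows essentially the same approach as the paper: both inequalities are deduced from Lemma~\ref{lm3.2}, the first from its first bound and the second from its second bound. The only difference is cosmetic algebra on the first summand: you rewrite it exactly as $\cosh(\rho/2)/(4\pi^{2}\sinh^{3}\rho)$ and split $\cosh(\rho/2)=1+(\cosh(\rho/2)-1)$, then absorb the remainder into the cross term by an inequality, whereas the paper multiplies by $1=\frac{1}{\cosh\rho}+\frac{\cosh\rho-1}{\cosh\rho}$ and uses $\cosh(\rho/2)\leq\cosh\rho$ on the leading piece instead, so that the cross term $\frac{1}{8\pi^{2}\cosh\rho\cosh(\rho/2)\sinh\rho}$ comes out as an exact identity. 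Both decompositions reach the same bound and are equally elementary.
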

\begin{proof}
By Lemma \ref{lm3.2},
\begin{equation*}
\begin{split}
(-\Delta_{\mathbb{H}}-9/4)^{-\frac{1}{2}}\leq&\frac{1}{16\pi^{2}(1+\cosh\rho)}\cdot\frac{1}{\sinh^{3}\frac{\rho}{2}}
\cdot\left(\frac{1}{\cosh\rho}+\frac{\cosh\rho-1}{\cosh\rho}\right)
+\frac{\sqrt{2}}{4\pi^{2}\sqrt{1+\cosh\rho}}\cdot\frac{1}{\sinh\rho}\\
=&\frac{1}{16\pi^{2}\cosh\rho(1+\cosh\rho)}\cdot\frac{1}{\sinh^{3}\frac{\rho}{2}}+\frac{\cosh\rho-1}{16\pi^{2}\cosh\rho(1+\cosh\rho)}\cdot\frac{1}{\sinh^{3}\frac{\rho}{2}}+\\
&\frac{\sqrt{2}}{4\pi^{2}\sqrt{1+\cosh\rho}}\cdot\frac{1}{\sinh\rho}\\
=&\frac{1}{16\pi^{2}\cosh\rho(1+\cosh\rho)}\cdot\frac{1}{\sinh^{3}\frac{\rho}{2}}+\frac{1}{8\pi^{2}\cosh\rho(1+\cosh\rho)}\cdot\frac{1}{\sinh\frac{\rho}{2}}+\\
&\frac{\sqrt{2}}{4\pi^{2}\sqrt{1+\cosh\rho}}\cdot\frac{1}{\sinh\rho}.
\end{split}
\end{equation*}
Since $1+\cosh\rho=2\cosh^{2}\rho/2$ and $\cosh\rho/2\leq\cosh\rho$, we have
\begin{equation*}
\begin{split}
(-\Delta_{\mathbb{H}}-9/4)^{-\frac{1}{2}}
\leq&\frac{1}{16\pi^{2}\cosh\rho/2\cdot2\cosh^{2}\rho/2}\cdot\frac{1}{\sinh^{3}\frac{\rho}{2}}+\frac{1}{8\pi^{2}\cosh\rho \cdot2\cosh^{2}\rho/2}\cdot\frac{1}{\sinh\frac{\rho}{2}}+\\
&\frac{\sqrt{2}}{4\pi^{2}\sqrt{1+\cosh\rho}}\cdot\frac{1}{\sinh\rho}\\
=&\frac{1}{4\pi^{2}\sinh^{3}\rho}+\left(\frac{1}{8\pi^{2}\cosh\rho\cosh\rho/2}+\frac{\sqrt{2}}{4\pi^{2}\sqrt{1+\cosh\rho}}\right)\frac{1}{\sinh\rho}.
\end{split}
\end{equation*}
Also by Lemma \ref{lm3.2},
\begin{equation*}
\begin{split}
(-\Delta_{\mathbb{H}}-9/4)^{-\frac{1}{2}}\leq&\frac{8}{\sqrt{\pi}(\sqrt{2\pi})^{5}}\cdot\frac{1}{\rho\sqrt{\cosh\rho+1}(\cosh\rho-1)}\\
\lesssim&\frac{1}{\rho e^{\frac{3}{2}\rho}},\;\;\;\rho>1.
\end{split}
\end{equation*}
This completes the proof of Corollary \ref{c1}.
\end{proof}

\begin{lemma} \label{lm3.3} Let $\alpha>0$. Then there exist   $\alpha_{0}>0$  such that
\[
(-\Delta_{\mathbb{H}}+\alpha)^{-\frac{1}{2}}\lesssim
e^{-(3+\alpha_{0})\rho},\;\;\;\;\;\rho>1.
\]
\end{lemma}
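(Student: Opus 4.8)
The plan is to carry the heat‑kernel representation \eqref{b3.2} one step further, this time exploiting the factor $e^{-\alpha t}$ that is absent from the computation in Lemma \ref{lm3.2}; it is exactly this factor which upgrades the polynomially decaying kernel there into one with exponential decay rate strictly above $\tfrac32$. Write $c=\alpha+\tfrac94>\tfrac94$ and $\mu=\sqrt c>\tfrac32$, so that $e^{-\alpha t-\frac94 t}=e^{-ct}$. First I would use the identity
\[
\left(-\frac{1}{\sinh r}\frac{\partial}{\partial r}\right)^{2}e^{-r^{2}/4t}=\frac{1}{2t}\left(\frac{r\cosh r}{\sinh^{3}r}-\frac{1}{\sinh^{2}r}+\frac{r^{2}}{2t\sinh^{2}r}\right)e^{-r^{2}/4t},
\]
so that, after multiplying by $t^{-1}e^{-ct}$ and integrating in $t$, the inner integral in \eqref{b3.2} equals
\[
\frac12\left(\frac{r\cosh r}{\sinh^{3}r}-\frac{1}{\sinh^{2}r}\right)\int_{0}^{\infty}t^{-2}e^{-ct-r^{2}/4t}\,dt+\frac{r^{2}}{4\sinh^{2}r}\int_{0}^{\infty}t^{-3}e^{-ct-r^{2}/4t}\,dt,
\]
the two coefficients being nonnegative since $r\cosh r\ge\sinh r$. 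The substitution $t=\tfrac{r}{2\mu}s$ (moving the minimum of $ct+\tfrac{r^{2}}{4t}$ to $s=1$, using $c=\mu^{2}$) turns the first $t$-integral into $\tfrac{2\mu}{r}\int_{0}^{\infty}s^{-2}e^{-\frac{\mu r}{2}(s+1/s)}\,ds$ and the second into $\tfrac{4\mu^{2}}{r^{2}}\int_{0}^{\infty}s^{-3}e^{-\frac{\mu r}{2}(s+1/s)}\,ds$; since $s+1/s\ge2$, for $\mu r\ge1$ these are at most $\tfrac{2\mu C_{2}}{r}e^{-\mu r}$ and $\tfrac{4\mu^{2}C_{3}}{r^{2}}e^{-\mu r}$ with $C_{k}=\int_{0}^{\infty}s^{-k}e^{-\frac12(s+1/s-2)}\,ds<\infty$. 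Hence for $r\ge1$ (so $\mu r\ge\mu>1$) the inner integral is $\lesssim\bigl(\tfrac{\cosh r}{\sinh^{3}r}+\tfrac{1}{\sinh^{2}r}\bigr)e^{-\mu r}$. (One may instead recognise the two $t$-integrals as $\tfrac{4\mu}{r}K_{1}(\mu r)$ and $\tfrac{8\mu^{2}}{r^{2}}K_{2}(\mu r)$ and use $K_{\nu}(z)\lesssim e^{-z}$ for $z\ge1$.)

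Feeding this bound into \eqref{b3.2} and using $\sinh r\gtrsim e^{r}$, $\cosh r\lesssim e^{r}$ for $r\ge1$, I get, for $\rho>1$,
\[
(-\Delta_{\mathbb{H}}+\alpha)^{-1/2}\lesssim\int_{\rho}^{+\infty}\frac{1}{\sqrt{\cosh r-\cosh\rho}}\left(\frac{\cosh r}{\sinh^{2}r}+\frac{1}{\sinh r}\right)e^{-\mu r}\,dr\lesssim\int_{\rho}^{+\infty}\frac{e^{-(\mu+1)r}}{\sqrt{\cosh r-\cosh\rho}}\,dr.
\]
To control the last integral I would split at $r=\rho+1$. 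From $\cosh r-\cosh\rho=\cosh\rho(\cosh(r-\rho)-1)+\sinh\rho\sinh(r-\rho)\ge\sinh\rho\sinh(r-\rho)$ one gets $\int_{\rho}^{\rho+1}(\cosh r-\cosh\rho)^{-1/2}\,dr\le(\sinh\rho)^{-1/2}\int_{0}^{1}(\sinh u)^{-1/2}\,du\lesssim e^{-\rho/2}$, so the part over $[\rho,\rho+1]$ is $\lesssim e^{-(\mu+1)\rho}e^{-\rho/2}$; while for $r\ge\rho+1$ the same inequality gives $\cosh r-\cosh\rho\gtrsim e^{r}$, hence $(\cosh r-\cosh\rho)^{-1/2}\lesssim e^{-r/2}$ and $\int_{\rho+1}^{+\infty}e^{-(\mu+1)r}e^{-r/2}\,dr\lesssim e^{-(\mu+3/2)\rho}$. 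Combining, $(-\Delta_{\mathbb{H}}+\alpha)^{-1/2}\lesssim e^{-(\mu+3/2)\rho}$ for $\rho>1$, and since $\mu+\tfrac32=\tfrac32+\sqrt{\alpha+9/4}=3+\alpha_{0}$ with $\alpha_{0}:=\sqrt{\alpha+9/4}-\tfrac32>0$, the lemma follows.

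The hard part is conceptual rather than computational: one must see that Lemma \ref{lm3.2} and Corollary \ref{c1} do not already suffice — they yield only the rate $\rho^{-1}e^{-3\rho/2}$, which dominates no $e^{-(3+\alpha_{0})\rho}$ — so the surplus over the exponent $3$ must come from elsewhere, namely from replacing $\tfrac94$ by $\alpha+\tfrac94$ in the Mellin representation, which produces the Bessel-type kernel of exponential rate $\mu=\sqrt{\alpha+9/4}>\tfrac32$. The rest — the explicit $t$-integrals via the scaling substitution (or modified Bessel functions), the integrable singularity of $(\cosh r-\cosh\rho)^{-1/2}$ at $r=\rho$, and the elementary hyperbolic estimates for $r\ge1$ — is routine and in the same spirit as the proofs of Lemmas \ref{lm3.1} and \ref{lm3.2}.
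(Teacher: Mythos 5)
Your proof is correct and follows essentially the same route as the paper: starting from the Mellin/heat-kernel representation \eqref{b3.2}, differentiating $e^{-r^{2}/4t}$ twice, and exploiting the extra factor $e^{-\alpha t}$ to push the decay rate of the kernel past $e^{-3\rho/2}$. The only substantive difference is how the Laplace-type $t$-integral $\int_{0}^{\infty}t^{-k}e^{-ct-r^{2}/4t}\,dt$ is bounded: the paper writes $e^{-ct-r^{2}/4t}=e^{-ct-(1-\epsilon)r^{2}/4t}\cdot e^{-\epsilon r^{2}/4t}$, pulls out the supremum $e^{-\sqrt{c(1-\epsilon)}\,r}$ of the first factor, and arrives at a rate $\alpha_{0}=\sqrt{(\alpha+9/4)(1-\epsilon_{0})}-3/2$ depending on a free parameter $\epsilon_{0}$; you instead use the scaling substitution $t=\tfrac{r}{2\mu}s$ (equivalently, identify the integral with a modified Bessel function $K_{\nu}(\mu r)$), which yields the optimal rate $\mu=\sqrt{\alpha+9/4}$ directly and hence the sharper $\alpha_{0}=\sqrt{\alpha+9/4}-3/2$. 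Both computations are routine and equally adequate here, since the lemma only needs some $\alpha_{0}>0$; your handling of the subsequent $r$-integral (splitting at $r=\rho+1$ and using $\cosh r-\cosh\rho\geq\sinh\rho\sinh(r-\rho)$) is likewise an equivalent variant of the paper's use of the explicit antiderivative.
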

\begin{proof} We have
\begin{equation*}
\begin{split}
(-\Delta_{\mathbb{H}}+\alpha)^{-\frac{1}{2}}=&\frac{1}{\sqrt{\pi}(\sqrt{2\pi})^{5}}\int^{+\infty}_{\rho}\frac{\sinh r}{\sqrt{\cosh r-\cosh\rho}}dr\int^{+\infty}_{0}t^{-1}e^{-\alpha t-\frac{9}{4}t}\left(-\frac{1}{\sinh r}\frac{\partial}{\partial r}\right)^{2}e^{-\frac{r^{2}}{4t}}dt\\
=&\frac{1}{2\sqrt{\pi}(\sqrt{2\pi})^{5}}\int^{+\infty}_{\rho}\frac{r\cosh r-\sinh r}{\sinh^{2}r\sqrt{\cosh r-\cosh\rho}}dr\int^{+\infty}_{0}t^{-2}e^{-\alpha t-\frac{9}{4}t}e^{-\frac{r^{2}}{4t}}dt+\\
&\frac{1}{4\sqrt{\pi}(\sqrt{2\pi})^{5}}\int^{+\infty}_{\rho}\frac{r^{2}}{\sinh r\sqrt{\cosh r-\cosh\rho}}dr\int^{+\infty}_{0}t^{-3}e^{-\alpha t-\frac{9}{4}t}e^{-\frac{r^{2}}{4t}}dt\\
=&:(V)+(VI),
\end{split}
\end{equation*}
where
\begin{equation*}
\begin{split}
(V)=&\frac{1}{2\sqrt{\pi}(\sqrt{2\pi})^{5}}\int^{+\infty}_{\rho}\frac{r\cosh r-\sinh r}{\sinh^{2}r\sqrt{\cosh r-\cosh\rho}}dr\int^{+\infty}_{0}t^{-2}e^{-\alpha t-\frac{9}{4}t}e^{-\frac{r^{2}}{4t}}dt\\
 \lesssim&\int^{+\infty}_{\rho}\frac{r\cosh r}{\sinh^{2}r\sqrt{\cosh r-\cosh\rho}}dr\int^{+\infty}_{0}t^{-2}e^{-\alpha t-\frac{9}{4}t}e^{-\frac{r^{2}}{4t}}dt;\\
(VI)=&\frac{1}{4\sqrt{\pi}(\sqrt{2\pi})^{5}}\int^{+\infty}_{\rho}\frac{r^{2}}{\sinh
r\sqrt{\cosh r-\cosh\rho}}dr\int^{+\infty}_{0}t^{-3}e^{-\alpha
t-\frac{9}{4}t}e^{-\frac{r^{2}}{4t}}dt.
\end{split}
\end{equation*}
Notice that $\forall \epsilon\in (0,1)$, $e^{-\alpha t-\frac{9}{4}t}e^{-\frac{(1-\epsilon)r^{2}}{4t}}$ has as  a maximum value $e^{-\sqrt{(\alpha+\frac{9}{4})(1-\epsilon)}r}$. We have,
\begin{equation*}
\begin{split}
\int^{+\infty}_{0}t^{-2}e^{-\alpha
t-\frac{9}{4}t}e^{-\frac{r^{2}}{4t}}dt\leq&
e^{-\sqrt{(\alpha+\frac{9}{4})(1-\epsilon)}r}\int^{+\infty}_{0}t^{-2}e^{-\frac{\epsilon
r^{2}}{4t}}dt=e^{-\sqrt{(\alpha+\frac{9}{4})(1-\epsilon)}r}\cdot\frac{4}{\epsilon r^{2}};\\
\int^{+\infty}_{0}t^{-3}e^{-\alpha
t-\frac{9}{4}t}e^{-\frac{r^{2}}{4t}}dt\leq&
e^{-\sqrt{(\alpha+\frac{9}{4})(1-\epsilon)}r}\int^{+\infty}_{0}t^{-3}e^{-\frac{\epsilon
r^{2}}{4t}}dt=e^{-\sqrt{(\alpha+\frac{9}{4})(1-\epsilon)}r}\cdot\frac{16}{\epsilon
r^{4}}.
\end{split}
\end{equation*}
Choose $\epsilon_{0}\in (0,1)$ such that
$
\alpha_{0}=\sqrt{(\alpha+\frac{9}{4})(1-\epsilon_{0})}-\frac{3}{2}>0.
$
Then, for $\rho>1$,
\begin{equation*}
\begin{split}
(V)\lesssim&\int^{+\infty}_{\rho}\frac{r\cosh r}{\sinh^{2}r\sqrt{\cosh r-\cosh\rho}}\cdot\frac{1}{r^{2}e^{\frac{3}{2}r+\alpha_{0}r}}dr\\
\lesssim&\int^{+\infty}_{\rho}\frac{1}{\sinh^{2}r\sqrt{\cosh r-\cosh\rho}}\cdot\frac{1}{re^{\frac{1}{2}r+\alpha_{0}r}}dr\\
\lesssim&\frac{1}{\rho e^{\frac{1}{2}\rho+\alpha_{0}\rho}}\int^{+\infty}_{\rho}\frac{1}{\sinh^{2}r\sqrt{\cosh r-\cosh\rho}}dr\\
\leq&\frac{1}{ e^{\frac{1}{2}\rho+\alpha_{0}\rho}}\int^{+\infty}_{\rho}\frac{1}{\sinh^{2}r\sqrt{\cosh r-\cosh\rho}}dr\\
\leq&\frac{1}{ e^{\frac{1}{2}\rho+\alpha_{0}\rho}\sinh\rho}\int^{+\infty}_{\rho}\frac{1}{\sinh r\sqrt{\cosh r-\cosh\rho}}dr;
\end{split}
\end{equation*}
\begin{equation*}
\begin{split}
(VI)\lesssim&\int^{+\infty}_{\rho}\frac{1}{\sinh
r\sqrt{\cosh r-\cosh\rho}}\cdot
\frac{1}{r^{2}e^{\frac{3}{2}r+\alpha_{0}r}}dr\\
\lesssim&
\frac{1}{e^{\frac{3}{2}\rho+\alpha_{0}\rho}}\int^{+\infty}_{\rho}\frac{1}{\sinh r\sqrt{\cosh r-\cosh\rho}}dr.
\end{split}
\end{equation*}
On the other hand, using the substitution $t=\sqrt{\cosh
r-\cosh\rho}$, we have
\begin{equation*}
\begin{split}
\int^{+\infty}_{\rho}\frac{1}{\sinh r\sqrt{\cosh r-\cosh\rho}}dr=&2\int^{+\infty}_{0}\frac{1}{(t^{2}+\cosh \rho)^{2}-1}dt\\
=&2\int^{+\infty}_{0}\frac{1}{(t^{2}+\cosh \rho+1)(t^{2}+\cosh \rho-1)}dt\\
\leq&\frac{2}{\cosh\rho+1}\int^{+\infty}_{0}\frac{1}{t^{2}+\cosh \rho-1}dt\\
=&\frac{2}{\cosh\rho+1}\cdot\frac{1}{\sqrt{\cosh\rho-1}}\frac{\pi}{2}.
\end{split}
\end{equation*}
Therefore, for $\rho>1$,
\begin{equation*}
\begin{split}
(V)\lesssim&\frac{1}{ e^{\frac{1}{2}\rho+\alpha_{0}\rho}\sinh\rho}\cdot\frac{2}{\cosh\rho+1}\cdot\frac{1}{\sqrt{\cosh\rho-1}}\frac{\pi}{2}\lesssim\frac{1}{e^{(3+\alpha_{0})\rho} };\\
(VI)\lesssim&
\frac{1}{e^{\frac{3}{2}\rho+\alpha_{0}\rho}}\cdot\frac{2}{\cosh\rho+1}\cdot\frac{1}{\sqrt{\cosh\rho-1}}\frac{\pi}{2}\lesssim\frac{1}{e^{(3+\alpha_{0})\rho} }.
\end{split}
\end{equation*}
Thus
\[
(-\Delta_{\mathbb{H}}+\alpha)^{-\frac{1}{2}}=(V)+(VI)\lesssim\frac{1}{e^{(3+\alpha_{0})\rho} },\;\;\;\;\;\rho>1.
\]
The proof is then completed.
\end{proof}

\section{rearrangement}
We now recall the rearrangement of a real functions on $\mathbb{B}^{4}$.  Suppose $f$ is
a real  function on $\mathbb{B}^{4}$. The non-increasing rearrangement of $f$
is defined by
\begin{equation*}
f^{\ast}(t)=\inf\{s>0: \lambda_{f}(s)\leq t\},
\end{equation*}
where $$\lambda_{f}(s)=|\{x\in \mathbb{B}^{4}: |f(x)|>s\}|=\int_{\{x\in \mathbb{B}^{4}:: |f(x)|>s\}}\left(\frac{2}{1-|x|^{2}}\right)^{4}dx.$$
 Here we use the
notation $|\Sigma|$ for the measure of a measurable set
$\Sigma\subset \mathbb{B}^{4}$.

\begin{lemma}\label{lm4.1}
There exists a constant $A_{1}>0$ such that
 \[
[(-9/4-\Delta_{\mathbb{H}})^{-1}]^{\ast}(t)\leq\frac{1}{4\sqrt{2}\pi\sqrt{t}}(1+A_{1}t^{\frac{1}{4}}),\;\;\;t>0.
\]
\end{lemma}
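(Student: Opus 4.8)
The plan is to dominate the Green function pointwise by an explicit, strictly decreasing radial function and then compute the non-increasing rearrangement of the latter by means of the exact volume of geodesic balls, which in dimension four is a polynomial in $\cosh\rho$. Write $G:=(-9/4-\Delta_{\mathbb H})^{-1}$, regarded as a function of the geodesic distance $\rho$ from the origin. By Lemma \ref{lm3.1}, $G\le\bar G$ pointwise, where
\[
\bar G(\rho)=\frac{1}{4\pi^{2}\cosh\frac{\rho}{2}\sinh^{2}\rho}+\frac{1}{4\pi^{2}\cosh\frac{\rho}{2}\sinh\rho},
\]
and $\bar G$ is strictly decreasing on $(0,\infty)$ with $\bar G(0^{+})=+\infty$, $\bar G(\infty)=0$; hence $G^{\ast}(t)\le\bar G^{\ast}(t)$ for every $t>0$. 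Since the super-level sets of $\bar G$ are exactly the geodesic balls centred at the origin, $\lambda_{\bar G}(s)=V(\bar G^{-1}(s))$, where by the polar-coordinate formula and $|\mathbb S^{3}|=2\pi^{2}$,
\[
V(\rho)=|\mathbb S^{3}|\int_{0}^{\rho}(\sinh r)^{3}\,dr=\frac{2\pi^{2}}{3}\bigl(\cosh^{3}\rho-3\cosh\rho+2\bigr)=\frac{2\pi^{2}}{3}(\cosh\rho-1)^{2}(\cosh\rho+2)
\]
is the hyperbolic volume of the ball of radius $\rho$; as $V$ is a continuous strictly increasing bijection of $(0,\infty)$ onto itself, $\bar G^{\ast}(t)=\bar G(\rho_{t})$, where $\rho_{t}$ is the unique solution of $V(\rho_{t})=t$.

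The next step is to pass to the variable $\sigma:=\sinh\frac{\rho}{2}$, for which $\cosh\frac{\rho}{2}=\sqrt{1+\sigma^{2}}$ and $\cosh\rho-1=2\sigma^{2}$, and to record the identities
\[
V=\frac{8\pi^{2}}{3}\sigma^{4}(2\sigma^{2}+3),\qquad
\bar G=\frac{1}{16\pi^{2}\sigma^{2}(1+\sigma^{2})^{3/2}}+\frac{1}{8\pi^{2}\sigma(1+\sigma^{2})}\le\frac{1}{16\pi^{2}\sigma^{2}}+\frac{1}{8\pi^{2}\sigma}.
\]
Setting $\sigma_{t}=\sinh\frac{\rho_{t}}{2}$, the equation $V(\rho_{t})=t$ reads $8\pi^{2}\sigma_{t}^{4}\bigl(1+\tfrac{2}{3}\sigma_{t}^{2}\bigr)=t$; this gives $\sigma_{t}^{2}\le(t/8\pi^{2})^{1/2}$ for all $t>0$, and, via $(1+x)^{-1}\ge 1-x$, also $\sigma_{t}^{4}\ge\tfrac{t}{8\pi^{2}}\bigl(1-\tfrac{2}{3}(t/8\pi^{2})^{1/2}\bigr)$. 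For $t$ in a bounded interval $(0,t_{0}]$ these bracket $\sigma_{t}^{2}$ between $(t/8\pi^{2})^{1/2}(1-c\sqrt t)$ and $(t/8\pi^{2})^{1/2}$ and give $\sigma_{t}\gtrsim t^{1/4}$. Feeding the lower bounds into $\bar G(\rho_{t})\le\frac{1}{16\pi^{2}\sigma_{t}^{2}}+\frac{1}{8\pi^{2}\sigma_{t}}$, using $\frac{1}{16\pi^{2}}(8\pi^{2})^{1/2}=\frac{\sqrt 2}{8\pi}=\frac{1}{4\sqrt 2\,\pi}$ together with $(1-x)^{-1}\le 1+2x$ for small $x$, one arrives at
\[
G^{\ast}(t)\le\bar G(\rho_{t})\le\frac{1}{4\sqrt 2\,\pi\sqrt t}+C_{1}+\frac{C_{2}}{t^{1/4}}\le\frac{1}{4\sqrt 2\,\pi\sqrt t}\bigl(1+A_{1}t^{1/4}\bigr),\qquad 0<t\le t_{0},
\]
provided $A_{1}$ is chosen so large that $C_{1}t^{1/4}+C_{2}\le\frac{A_{1}}{4\sqrt 2\,\pi}$ on $(0,t_{0}]$.

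For $t\ge t_{0}$ one has $\sigma_{t}\ge 1$, so $V(\rho_{t})=t$ forces $\sigma_{t}\gtrsim t^{1/6}$, and the sharper bound $\bar G(\rho_{t})\le\frac{1}{16\pi^{2}\sigma_{t}^{5}}+\frac{1}{8\pi^{2}\sigma_{t}^{3}}$ then yields $G^{\ast}(t)\le\bar G(\rho_{t})\lesssim t^{-1/2}$. Since the right-hand side of the asserted inequality is bounded below by a positive constant on each compact subinterval of $[t_{0},\infty)$ and behaves like a constant times $t^{-1/4}$, which dominates $t^{-1/2}$, as $t\to\infty$, a final enlargement of $A_{1}$ covers this range and completes the proof.

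The main obstacle is purely the bookkeeping of constants in the small-$t$ regime. The point is that the leading term $\frac{1}{16\pi^{2}\sigma^{2}}$ of $\bar G$, combined with the exact small-$\rho$ asymptotics $V\sim 8\pi^{2}\sigma^{4}$ of the volume, must reproduce the coefficient $\frac{1}{4\sqrt 2\,\pi}$ \emph{exactly}; the subleading term $\frac{1}{8\pi^{2}\sigma}$ of $\bar G$ is precisely what generates the correction and pins down the exponent $\tfrac14$ in $A_{1}t^{1/4}$; and the error terms $C_{1},C_{2}$, together with the entire $t\ge t_{0}$ range, are soft and only influence how large $A_{1}$ must be chosen.
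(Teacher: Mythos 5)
Your proof is correct and follows the same overall strategy as the paper: dominate the Green function pointwise via Lemma~\ref{lm3.1}, then identify $G^{\ast}(t)$ with the pointwise bound evaluated at the radius $\rho_t$ of the geodesic ball of volume $t$, and invert the volume constraint to produce a lower bound on $\sinh\rho_t$. The only meaningful difference is that the paper avoids your small-$t$/large-$t$ case split (and the subsequent soft enlargement of $A_1$): rather than using the exact closed-form volume $\tfrac{2\pi^2}{3}(\cosh\rho-1)^2(\cosh\rho+2)$ and doing asymptotics in $\sigma=\sinh\tfrac{\rho}{2}$, it simply bounds $t=2\pi^2\int_0^{\rho_t}\sinh^3 r\,dr\le 2\pi^2\int_0^{\rho_t}\sinh^3 r\cosh r\,dr=\tfrac{\pi^2}{2}\sinh^4\rho_t$, which gives $\sinh^2\rho_t\ge\sqrt{2t}/\pi$ and $\sinh\rho_t\ge(2t/\pi^2)^{1/4}$ uniformly in $t>0$, and plugging these into Lemma~\ref{lm3.1} together with $\cosh(\rho_t/2)\ge1$ yields the stated bound with the explicit value $A_1=2^{1/4}\pi^{-1/2}$ in one step.
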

\begin{proof} Set $\phi(\rho)=(-9/4-\Delta_{\mathbb{H}})^{-1}$.
Define, for any $s>0$,
\begin{equation}\label{4.1}
\lambda_{\phi}(s)=\int_{\{\phi(\rho)>s\}}dV=|\mathbb{S}^{3}|\int^{\rho_{s}}_{0}\sinh^{3}\rho
d\rho=2\pi^{2}\int^{\rho_{s}}_{0}\sinh^{3}\rho d\rho,
\end{equation}
where $\rho_{s}$ is the solution of equation
\begin{equation}\label{4.2}
\phi(\rho)=s.
\end{equation}
Therefore, since $\phi^{\ast}(t)=\inf\{s>0: \lambda_{\phi}(s)\leq t\}$, we have
\begin{equation}\label{4.3}
t=\lambda_{\phi}(\phi^{\ast}(t))=2\pi^{2}\int^{\rho_{\phi^{\ast}(t)}}_{0}\sinh^{3}\rho
d\rho,
\end{equation}
where $\rho_{g^{\ast}(t)}$ satisfies
\begin{equation}\label{4.4}
\phi( \rho_{\phi^{\ast}(t)})=\phi^{\ast}(t).
\end{equation}
Notice that
\begin{equation}\label{b4.5}
t=2\pi^{2}\int^{\rho_{\phi^{\ast}(t)}}_{0}\sinh^{3}\rho d\rho\leq2\pi^{2}\int^{\rho_{\phi^{\ast}(t)}}_{0}\sinh^{3}\rho\cosh\rho d\rho=\frac{\pi^{2}}{2}\sinh^{4}\rho_{\phi^{\ast}(t)};
\end{equation}
\begin{equation}\label{b4.6}
t=2\pi^{2}\int^{\rho_{\phi^{\ast}(t)}}_{0}\sinh^{3}\rho d\rho\leq2\pi^{2}\int^{\rho_{\phi^{\ast}(t)}}_{0}e^{3\rho} d\rho=\frac{2\pi^{2}}{3}e^{3\rho_{\phi^{\ast}(t)}}.
\end{equation}
We have, by (\ref{4.4})-(\ref{b4.5}) and Lemma 3.1,
\begin{equation*}
\begin{split}
\phi^{\ast}(t)\leq&\frac{1}{4\sqrt{2}\pi\sqrt{t}\cdot\cosh\frac{\rho_{\phi^{\ast}(t)}}{2}}+
\frac{1}{4\pi^{2}\cosh\frac{\rho_{\phi^{\ast}(t)}}{2}}\left(\frac{\pi^{2}}{2t}\right)^{\frac{1}{4}}\\
\leq&\frac{1}{4\sqrt{2}\pi\sqrt{t}}(1+A_{1}t^{\frac{1}{4}}),
\end{split}
\end{equation*}
where $A_{1}=2^{\frac{1}{4}}\pi^{-\frac{1}{2}}$.
The desired result follows.
\end{proof}

Similarly, by Corollary \ref{c1} and Lemma \ref{lm3.3}, we have the following (we omit the proof because it is completely the same to that of Lemma \ref{lm4.1}).
\begin{lemma} \label{lm4.2}Let $\alpha>0$ and $\epsilon_{0}$ be in Lemma 3.3.
Then there exists a constant $A_{2}>0$ such that
\begin{equation*}
\begin{split}
[(-\Delta_{\mathbb{H}}-9/4)^{-\frac{1}{2}}]^{\ast}(t)\leq&\frac{2^{\frac{1}{4}}}{8\sqrt{\pi}t^{\frac{3}{4}}}(1+A_{2}\sqrt{t}),\;\;\;t>0;\\
[(-\Delta_{\mathbb{H}}-9/4)^{-\frac{1}{2}}]^{\ast}(t)\lesssim&\frac{1}{
\sqrt{t}\ln t},\;\;\;t>2;\\
[(-\Delta_{\mathbb{H}}+\alpha)^{-\frac{1}{2}}]^{\ast}(t)\lesssim&\frac{1}{t^{1+\frac{1}{3}\epsilon_{0}}},\;\;\;t>2.
\end{split}
\end{equation*}
\end{lemma}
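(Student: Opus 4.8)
The plan is to follow the proof of Lemma \ref{lm4.1} line for line, substituting the pointwise estimates of Corollary \ref{c1} for those of Lemma \ref{lm3.1}, and the estimate of Lemma \ref{lm3.3} for the last assertion. The only structural input needed is exactly the one used tacitly in Lemma \ref{lm4.1}: each of the kernels $(-\Delta_{\mathbb{H}}-9/4)^{-1/2}$ and $(-\Delta_{\mathbb{H}}+\alpha)^{-1/2}$, regarded as a radial function of the geodesic distance $\rho$, is positive and decreasing in $\rho$ (immediate from the heat-kernel representations of Section 3 and the positivity of the representing measure in $t$), so that its super-level sets are geodesic balls. Writing $K$ for either kernel, one then has, exactly as in (\ref{4.1})--(\ref{4.4}),
\[
t=\lambda_{K}(K^{\ast}(t))=2\pi^{2}\int_{0}^{\rho_{K^{\ast}(t)}}\sinh^{3}\rho\, d\rho,\qquad K^{\ast}(t)=K(\rho_{K^{\ast}(t)}).
\]

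Next I would record the two elementary consequences of this identity already used in (\ref{b4.5})--(\ref{b4.6}): bounding $\sinh^{3}\rho\le\sinh^{3}\rho\cosh\rho$ gives $t\le\frac{\pi^{2}}{2}\sinh^{4}\rho_{K^{\ast}(t)}$, hence $\sinh\rho_{K^{\ast}(t)}\ge(2t/\pi^{2})^{1/4}$; and bounding $\sinh^{3}\rho\le e^{3\rho}$ gives $t\le\frac{2\pi^{2}}{3}e^{3\rho_{K^{\ast}(t)}}$, hence $\rho_{K^{\ast}(t)}\ge\frac{1}{3}\log(3t/2\pi^{2})$. The first inequality controls the behaviour near the origin and produces the bounds valid for all $t>0$; the second controls the behaviour at infinity and produces the decay bounds for large $t$ (once $t$ exceeds a fixed threshold one has $\rho_{K^{\ast}(t)}>1$, which is what lets the asymptotic estimates be invoked, the bounded range of $t$ below the threshold being handled trivially since $K^{\ast}$ is bounded there).

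The three assertions then follow by substitution into $K^{\ast}(t)=K(\rho_{K^{\ast}(t)})$. For the first, insert the first estimate of Corollary \ref{c1}, bound $\cosh\rho\ge1$, $\cosh(\rho/2)\ge1$ and $\sqrt{1+\cosh\rho}\ge\sqrt{2}$, and replace $\sinh\rho_{K^{\ast}(t)}$ by $(2t/\pi^{2})^{1/4}$: the $\sinh^{-3}\rho$ term gives the leading term, via the identity $\frac{1}{4\pi^{2}}(2t/\pi^{2})^{-3/4}=\frac{2^{1/4}}{8\sqrt{\pi}}\,t^{-3/4}$, while the remaining two terms are $O(t^{-1/4})$, which is precisely the factor $(1+A_{2}\sqrt{t})$ in the stated bound. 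For the second, use instead the second estimate of Corollary \ref{c1}, namely $(-\Delta_{\mathbb{H}}-9/4)^{-1/2}\lesssim\rho^{-1}e^{-3\rho/2}$ for $\rho>1$, together with $\rho_{K^{\ast}(t)}\ge\frac{1}{3}\log(3t/2\pi^{2})$, to obtain $K^{\ast}(t)\lesssim\frac{1}{\sqrt{t}\,\log t}$. For the third, use Lemma \ref{lm3.3}, $(-\Delta_{\mathbb{H}}+\alpha)^{-1/2}\lesssim e^{-(3+\alpha_{0})\rho}$ for $\rho>1$, with the same lower bound on $\rho_{K^{\ast}(t)}$, to obtain $K^{\ast}(t)\lesssim t^{-(3+\alpha_{0})/3}=t^{-1-\alpha_{0}/3}$, i.e. a power decay strictly faster than $t^{-1}$ (choosing $\epsilon_{0}$ small enough that $\alpha_{0}\ge\epsilon_{0}$ gives the stated form $t^{-1-\epsilon_{0}/3}$).

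There is no substantive obstacle here: the argument is pure bookkeeping modelled on Lemma \ref{lm4.1}. The only two points deserving a word are the radial monotonicity of the kernels, which is what legitimizes replacing the distribution function by the volume of a geodesic ball, and the verification of the numerical constant $\frac{2^{1/4}}{8\sqrt{\pi}}$ in the leading term, which is the one-line computation displayed above.
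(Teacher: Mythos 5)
Your proposal is correct and is exactly the argument the paper has in mind; the paper omits the proof of Lemma~\ref{lm4.2} with the remark that it is ``completely the same'' as that of Lemma~\ref{lm4.1}, and what you have written out is precisely that: identify the superlevel set of the (positive, radially decreasing) kernel with a geodesic ball, obtain the two lower bounds $\sinh\rho_{K^{\ast}(t)}\ge(2t/\pi^{2})^{1/4}$ and $\rho_{K^{\ast}(t)}\ge\frac{1}{3}\log(3t/2\pi^{2})$ from the volume identity, and substitute into the pointwise estimates of Corollary~\ref{c1} and Lemma~\ref{lm3.3}. Your verification of the leading constant $\frac{2^{1/4}}{8\sqrt{\pi}}$ is correct, and you rightly observe that the third inequality as stated (with exponent $1+\frac{1}{3}\epsilon_{0}$ rather than $1+\frac{1}{3}\alpha_{0}$) tacitly requires $\epsilon_{0}\le\alpha_{0}$, which can be arranged in the choice made in Lemma~\ref{lm3.3}; this is a harmless imprecision in the paper's statement rather than a flaw in your proof.
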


Since for $\alpha>-9/4$, \[
(-\Delta_{\mathbb{H}}+\alpha)^{-\frac{1}{2}}\leq(-\Delta_{\mathbb{H}}-9/4)^{-\frac{1}{2}},
\]
we  have, by Lemma \ref{lm4.2},
\begin{equation}\label{4.5}
\begin{split}
[(-\Delta_{\mathbb{H}}+\alpha)^{-\frac{1}{2}}]^{\ast}(t)\leq[(-\Delta_{\mathbb{H}}-9/4)^{-\frac{1}{2}}]^{\ast}(t)\leq\frac{2^{\frac{1}{4}}}{8\sqrt{\pi}t^{\frac{3}{4}}}(1+A_{2}\sqrt{t}),\;\;\;t>0.
\end{split}
\end{equation}

\begin{lemma} \label{lm4.3}Let $\alpha>0$ and $\epsilon_{0}$ be in Lemma 3.3.
Then
\begin{equation}\label{4.6}
\begin{split}
[(-\Delta_{\mathbb{H}}-9/4)^{-\frac{1}{2}}\ast(-\Delta_{\mathbb{H}}+\alpha)^{-\frac{1}{2}}]^{\ast}(t)\leq\frac{1}{4\sqrt{2}\pi\sqrt{t}}(1+A_{1}t^{\frac{1}{4}}),\;\;\;t>0
\end{split}
\end{equation}
and
\begin{equation}\label{4.7}
\begin{split}
[(-\Delta_{\mathbb{H}}-9/4)^{-\frac{1}{2}}\ast(-\Delta_{\mathbb{H}}+\alpha)^{-\frac{1}{2}}]^{\ast}(t)\lesssim\frac{1}{\sqrt{t}
\ln t},\;\;t>2.
\end{split}
\end{equation}
\end{lemma}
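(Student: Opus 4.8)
The plan is to reduce the rearrangement of the convolution $(-\Delta_{\mathbb{H}}-9/4)^{-\frac{1}{2}}\ast(-\Delta_{\mathbb{H}}+\alpha)^{-\frac{1}{2}}$ to the rearrangements of the two factors, which were already estimated in Lemma \ref{lm4.2} and in (\ref{4.5}), and then invoke O'Neil's lemma on the rearrangement of convolutions. Recall O'Neil's inequality: if $h=f\ast g$ then
\[
h^{\ast}(t)\leq \frac{1}{t}\int_{0}^{t}f^{\ast}(s)\,ds\cdot\int_{0}^{t}g^{\ast}(s)\,ds+\int_{t}^{\infty}f^{\ast}(s)g^{\ast}(s)\,ds,
\]
valid in our setting because the convolution (\ref{2.1}) on $\mathbb{B}^{n}$ is built from the measure-preserving M\"obius maps $T_{x}$, so the layer-cake/distributional estimates underlying O'Neil's lemma go through verbatim. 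Here I take $f=(-\Delta_{\mathbb{H}}-9/4)^{-\frac{1}{2}}$ and $g=(-\Delta_{\mathbb{H}}+\alpha)^{-\frac{1}{2}}$, both radial, so the convolution makes sense and is associative by (\ref{2.2}).

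First I would record, from (\ref{4.5}), the uniform bound $f^{\ast}(s), g^{\ast}(s)\leq \frac{2^{1/4}}{8\sqrt{\pi}}\,s^{-3/4}(1+A_{2}\sqrt{s})$ for all $s>0$. Plugging this into the first term of O'Neil's inequality gives
\[
\frac{1}{t}\Bigl(\int_{0}^{t}f^{\ast}(s)\,ds\Bigr)\Bigl(\int_{0}^{t}g^{\ast}(s)\,ds\Bigr)\lesssim \frac{1}{t}\Bigl(t^{1/4}+t^{3/4}\Bigr)^{2}\lesssim \frac{1}{\sqrt{t}}\bigl(1+A_{1}t^{1/4}\bigr)^{2}\cdot\text{(const)},
\]
since $\int_0^t s^{-3/4}\,ds = 4t^{1/4}$ and $\int_0^t s^{-3/4}\sqrt{s}\,ds=4t^{3/4}$. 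For the second term, the integral $\int_{t}^{\infty}f^{\ast}(s)g^{\ast}(s)\,ds$ must be split at some fixed level, say $s=2$: on $(t,2)$ (nonempty only when $t<2$) one uses the polynomial bound above, and on $(\max(t,2),\infty)$ one uses the \emph{decay} estimates from Lemma \ref{lm4.2}, namely $f^{\ast}(s)\lesssim \frac{1}{\sqrt{s}\ln s}$ and $g^{\ast}(s)\lesssim s^{-1-\frac{1}{3}\epsilon_{0}}$, whence the product is $\lesssim s^{-3/2-\epsilon_0/3}(\ln s)^{-1}$, which is integrable at infinity and contributes a lower-order term. Adding the pieces, the $t^{-1/2}$ behavior from the first term dominates, giving (\ref{4.6}) after absorbing constants into $A_{1}$; and for $t>2$ the bound sharpens to $\lesssim \frac{1}{\sqrt{t}\ln t}$ because the main term $\frac{1}{t}\int_0^t f^{\ast}\int_0^t g^{\ast}$ can itself be re-estimated using the $\ln$-decay of $f^{\ast}$ on the bulk $(2,t)$, yielding (\ref{4.7}).

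The main obstacle I anticipate is bookkeeping in the second (tail) term of O'Neil's inequality: one has to carefully track that the polynomial bound is only usable on the bounded range $s<2$ while the decay bounds are only valid for $s>2$, and then verify that the $t<2$ and $t\geq 2$ regimes both produce the claimed $t^{-1/2}$, resp.\ $t^{-1/2}(\ln t)^{-1}$, behavior. A secondary point requiring care is the sharpening for $t>2$: to extract the extra $\ln t$ factor in (\ref{4.7}) one cannot simply use the crude polynomial bound on $(0,t)$ in the first O'Neil term, but must instead estimate $\int_0^t f^{\ast}(s)\,ds$ using the improved bound $f^{\ast}(s)\lesssim\frac{1}{\sqrt{s}\ln s}$ on $(2,t)$, so that $\int_0^t f^{\ast}\lesssim \frac{\sqrt{t}}{\ln t}$ for large $t$, while $\int_0^t g^{\ast}\lesssim$ const (since $g^{\ast}$ is integrable, being $O(s^{-1-\epsilon_0/3})$), giving the first O'Neil term $\lesssim\frac{1}{t}\cdot\frac{\sqrt t}{\ln t}\cdot 1=\frac{1}{\sqrt t\ln t}$ as needed.
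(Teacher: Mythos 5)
Your treatment of the tail estimate (\ref{4.7}) matches the paper's: both apply O'Neil's lemma, bound $\int_0^t[(-\Delta_{\mathbb{H}}+\alpha)^{-1/2}]^{\ast}$ by a constant (using the $s^{-1-\epsilon_0/3}$ decay), bound $\int_0^t[(-\Delta_{\mathbb{H}}-9/4)^{-1/2}]^{\ast}\lesssim\sqrt{t}/\ln t$ via the $\frac{1}{\sqrt s\ln s}$ decay, and show the second O'Neil term decays faster. Your last paragraph correctly identifies the key refinement needed there.

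However, your proposed proof of (\ref{4.6}) has a genuine gap. You plug the polynomial bound $f^{\ast}(s),g^{\ast}(s)\lesssim s^{-3/4}(1+A_2\sqrt s)$ into the first O'Neil term and assert
\[
\frac{1}{t}\Bigl(t^{1/4}+t^{3/4}\Bigr)^{2}\lesssim\frac{1}{\sqrt t}\bigl(1+A_1 t^{1/4}\bigr)^{2},
\]
but the left side grows like $t^{1/2}$ as $t\to\infty$ while the right side tends to a constant, so the inequality fails for large $t$. The target in (\ref{4.6}) decays like $t^{-1/4}$ at infinity, so this route cannot produce it uniformly in $t>0$ with only the polynomial input. (You could try to patch this by switching to the decay bounds of Lemma \ref{lm4.2} for $t>2$, but then O'Neil would only yield a $\lesssim$ statement and not the \emph{precise} constant $\frac{1}{4\sqrt2\pi}$ appearing in (\ref{4.6}); that constant is used downstream in the proof of Lemma \ref{lm5.1} to get $\varphi(t)\leq 1+A_1\sqrt[4]{\Omega_0}e^{-t/4}$ with leading coefficient exactly $1$, which in turn produces the sharp exponent $32\pi^2$.)

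The paper avoids O'Neil entirely for (\ref{4.6}). Since $\alpha>-9/4$ one has the pointwise heat-kernel inequality (\ref{3.1}), $(-\Delta_{\mathbb{H}}+\alpha)^{-1/2}\leq(-\Delta_{\mathbb{H}}-9/4)^{-1/2}$, and since all kernels are nonnegative and the convolution is monotone,
\[
(-\Delta_{\mathbb{H}}-9/4)^{-1/2}\ast(-\Delta_{\mathbb{H}}+\alpha)^{-1/2}\leq(-\Delta_{\mathbb{H}}-9/4)^{-1/2}\ast(-\Delta_{\mathbb{H}}-9/4)^{-1/2}=(-\Delta_{\mathbb{H}}-9/4)^{-1}
\]
pointwise. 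Then (\ref{4.6}) is immediate from Lemma \ref{lm4.1}, with the exact constant. The observation you are missing is that the convolution of the two square-root Green kernels is dominated pointwise by the Green kernel of $(-\Delta_{\mathbb{H}}-9/4)^{-1}$, whose rearrangement has already been sharply estimated.
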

\begin{proof}
Since
$(-\Delta_{\mathbb{H}}-9/4)^{-\frac{1}{2}}\ast(-\Delta_{\mathbb{H}}-9/4)^{-\frac{1}{2}}=(-\Delta_{\mathbb{H}}-9/4)^{-1}$,
we have, by  (\ref{3.1}),
\begin{equation*}
\begin{split}
(-\Delta_{\mathbb{H}}-9/4)^{-\frac{1}{2}}\ast(-\Delta_{\mathbb{H}}+\alpha)^{-\frac{1}{2}}\leq&(-\Delta_{\mathbb{H}}-9/4)^{-\frac{1}{2}}\ast(-\Delta_{\mathbb{H}}-9/4)^{-\frac{1}{2}}\\
=&(-\Delta_{\mathbb{H}}-9/4)^{-1}.
\end{split}
\end{equation*}
Therefore, by Lemma \ref{lm4.1}, we get (\ref{4.6}).

Now we  prove (\ref{4.7}). By O'Neil's lemma (see \cite{on}),
\begin{equation}\label{c4.1}
\begin{split}
&[(-\Delta_{\mathbb{H}}-9/4)^{-\frac{1}{2}}\ast(-\Delta_{\mathbb{H}}+\alpha)^{-\frac{1}{2}}]^{\ast}(t)\\
\leq&\frac{1}{t}\int^{t}_{0}
[(-\Delta_{\mathbb{H}}-9/4)^{-\frac{1}{2}}]^{\ast}(s)ds\cdot\int^{t}_{0}
[(-\Delta_{\mathbb{H}}+\alpha)^{-\frac{1}{2}}]^{\ast}(s)ds+\\
&\int^{\infty}_{t}[(-\Delta_{\mathbb{H}}-9/4)^{-\frac{1}{2}}]^{\ast}(s)
[(-\Delta_{\mathbb{H}}+\alpha)^{-\frac{1}{2}}]^{\ast}(s)ds.
\end{split}
\end{equation}
By Lemma \ref{lm4.2}, it is easy to check
\begin{equation}\label{b4.9}
\begin{split}
\int^{t}_{0}[(-\Delta_{\mathbb{H}}+\alpha)^{-\frac{1}{2}}]^{\ast}(s)ds\lesssim& \int^{2}_{0}\frac{2^{\frac{1}{4}}}{8\sqrt{\pi}s^{\frac{3}{4}}}(1+A_{2}\sqrt{s})ds+
\int^{t}_{2}s^{-1-\frac{1}{3}\epsilon_{0}}ds\lesssim 1,\;\;\;t>2.
\end{split}
\end{equation}
Also by Lemma \ref{lm4.2} and (\ref{4.5}), we have, for $t>2$,
\begin{equation}\label{b4.7}
\begin{split}
\int^{t}_{0}[(-\Delta_{\mathbb{H}}-9/4)^{-\frac{1}{2}}]^{\ast}(s)ds\lesssim&\int^{2}_{0}\frac{2^{\frac{1}{4}}}{8\sqrt{\pi}s^{\frac{3}{4}}}(1+A_{2}\sqrt{s})ds+
\int^{t}_{2}\frac{1}{\sqrt{s}\ln s}ds\\
\lesssim& 1+\int^{t}_{2}\frac{1}{\sqrt{s}\ln s}ds.
\end{split}
\end{equation}
Notice that, by L'Hospital's law,
\begin{equation*}
\begin{split}
\lim_{t\rightarrow\infty}\frac{\int^{t}_{2}\frac{1}{\sqrt{s}\ln s}ds}{\frac{\sqrt{t}}{\ln t}}=\lim_{t\rightarrow\infty}\frac{2}{1-\frac{1}{\ln t}}=2.
\end{split}
\end{equation*}
We have, $\int^{t}_{2}\frac{1}{\sqrt{s}\ln s}ds\lesssim \frac{\sqrt{t}}{
\ln t}$, $t>2$. Therefore,
by (\ref{b4.7}),
\begin{equation}\label{b4.8}
\begin{split}
\int^{t}_{0}[(-\Delta_{\mathbb{H}}-9/4)^{-\frac{1}{2}}]^{\ast}(s)ds\lesssim 1+\frac{\sqrt{t}}{
\ln t}
\lesssim&\frac{\sqrt{t}}{
\ln t},\;\;\;t>2.
\end{split}
\end{equation}
Similarly, by Lemma \ref{lm4.2},  for $t>2$,
\begin{equation}\label{b4.10}
\begin{split}
\int^{\infty}_{t}[(-\Delta_{\mathbb{H}}-9/4)^{-\frac{1}{2}}]^{\ast}(s)
[(-\Delta_{\mathbb{H}}+\alpha)^{-\frac{1}{2}}]^{\ast}(s)ds\lesssim&\int^{\infty}_{t}\frac{1}{s^{\frac{3}{2}+\frac{1}{3}\epsilon_{0}}\ln
s}ds
\lesssim\frac{1}{t^{\frac{1}{2}+\frac{1}{3}\epsilon_{0}}\ln
t}.
\end{split}
\end{equation}
Combing (\ref{c4.1}), (\ref{b4.9}), (\ref{b4.8}) and  (\ref{b4.10}) yields, for $t>2$,
\begin{equation*}
\begin{split}
[(-\Delta_{\mathbb{H}}-9/4)^{-\frac{1}{2}}\ast(-\Delta_{\mathbb{H}}+\alpha)^{-\frac{1}{2}}]^{\ast}(t)\lesssim&\frac{1}{t}\cdot\frac{\sqrt{t}}{
\ln t}\cdot 1+\frac{1}{t^{\frac{1}{2}+\frac{1}{3}\epsilon_{0}}\ln
t}\lesssim\frac{1}{\sqrt{t}
\ln t}.
\end{split}
\end{equation*}
The desired result follows.
\end{proof}

\section{Proofs of main theorems}

Firstly,  we shall prove Theorem 1.4. The main idea is to decompose  the whole space by the level set of the functions under consideration and derive the global inequality on the whole space from the local ones. This idea was initially  developed different settings by Lam and the first author to derive a global
Trudinger-Moser  inequality
from a local one (see \cite{ll,l}).\\

\textbf{Proof of Theorem \ref{th1.1}}   Let $u\in
C^{\infty}_{0}(\mathbb{B})$ be such that
\[
\int_{\mathbb{B}^{4}}\left(-\Delta_{\mathbb{H}}-9/4\right)(-\Delta_{\mathbb{H}}+\alpha)u\cdot
udV\leq1,
\]
We have, by (\ref{1.4}),
 \begin{equation}\label{5.1}
\begin{split}
&\int_{\mathbb{B}^{4}}\left(-\Delta_{\mathbb{H}}-9/4\right)(-\Delta_{\mathbb{H}}+\alpha)u\cdot
udV\\
\geq&\left(\frac{9}{4}+\alpha\right)\left(\int_{\mathbb{B}^{4}}|\nabla_{\mathbb{H}}u|^{2}dV-\frac{9}{4}\int_{\mathbb{B}^{4}}|u|^{2}dV\right)\\
\geq&\frac{9}{4}\left(\int_{\mathbb{B}^{4}}|\nabla_{\mathbb{H}}u|^{2}dV-\frac{9}{4}\int_{\mathbb{B}^{4}}|u|^{2}dV\right).
\end{split}
\end{equation}
Combing (\ref{5.1})  and the following Hardy-Sobolev inequality on
$\mathbb{B}^{4}$ (see e.g. \cite{m1})
 \begin{equation}\label{5.2}
\begin{split}
\int_{\mathbb{B}^{4}}|\nabla_{\mathbb{H}}u|^{2}dV-\frac{9}{4}\int_{\mathbb{B}^{4}}|u|^{2}dV\geq
C_{4}\left(\int_{\mathbb{B}^{4}}|u|^{4}dV\right)^{\frac{1}{2}},
\end{split}
\end{equation}
we have,
 \begin{equation}\label{5.3}
\begin{split}
1\geq\int_{\mathbb{B}^{4}}\left(-\Delta_{\mathbb{H}}-9/4\right)(-\Delta_{\mathbb{H}}+\alpha)u\cdot
udV\geq&
C^{-1}_{5}\left(\int_{\mathbb{B}^{4}}|u|^{4}dV\right)^{\frac{1}{2}},
\end{split}
\end{equation}
where $C_{5}=\frac{4}{9 C_{4}}$.

 Set $\Omega(u)=\{x\in\mathbb{B}^{4}:|u(x)|\geq1\}$. By (\ref{5.3}),
 we have
\begin{equation}\label{5.4}
\begin{split}
|\Omega(u)|=&\int_{\Omega(u)}dV\leq\int_{\mathbb{B}}|u(z)|^{4}dV\leq
C^{2}_{5}.
\end{split}
\end{equation}
We write
\begin{equation}\label{4.2}
\begin{split}
&\int_{\mathbb{B}^{4}}(e^{32\pi^{2} u^{2}}-1-32\pi^{2} u^{2})dV\\
=&\int_{\Omega(u)}(e^{32\pi^{2} u^{2}}-1-32\pi^{2} u^{2})dV+
\int_{\mathbb{B}^{4}\setminus\Omega(u)}(e^{32\pi^{2} u^{2}}-1-32\pi^{2} u^{2})dV\\
\leq&\int_{\Omega(u)}e^{32\pi^{2} u^{2}}dV+
\int_{\mathbb{B}^{4}\setminus\Omega(u)}(e^{32\pi^{2}
u^{2}}-1-32\pi^{2} u^{2})dV.
\end{split}
\end{equation}
Notice that on the domain $\mathbb{B}^{4}\setminus\Omega(u)$, we
have $|u(x)|<1$. Thus,
\begin{equation}\label{5.6}
\begin{split}
&\int_{\mathbb{B}^{4}\setminus\Omega(u)}(e^{32\pi^{2} u^{2}}-1-32\pi^{2} u^{2})dV\\
=&\int_{\mathbb{B}^{4}\setminus\Omega(u)}\sum^{\infty}_{n=2}\frac{(32\pi^{2} u^{2})^{n}}{n!}dV\\
\leq&\int_{\mathbb{B}\setminus\Omega(u)}\sum^{\infty}_{n=2}\frac{(32\pi^{2} )^{n}u^{4}}{n!}dV\\
\leq&\sum^{\infty}_{n=2}\frac{(32\pi^{2} )^{n}}{n!}\int_{\mathbb{B}^{4}}|u(x)|^{4}dV\\
\leq&e^{32\pi^{2}} C^{2}_{5}.
\end{split}
\end{equation}

To finish the proof, it is enough to show
$\int_{\Omega(u)}e^{32\pi^{2} u^{2}}dV$ is bounded by some universal
constant.
By (\ref{5.4}), we may assume
\begin{equation}\label{b5.2}
\begin{split}
|\Omega(u)|\leq \Omega_{0}
\end{split}
\end{equation}
for some constant $\Omega_{0}$ which is independent of $u$.
Now rewrite
\begin{equation}\label{5.7}
\begin{split}
\int_{\mathbb{B}^{4}}\left(-\Delta_{\mathbb{H}}-9/4\right)(-\Delta_{\mathbb{H}}+\alpha)u\cdot
udV=\int_{\mathbb{B}^{4}}\left|\sqrt{\left(-\Delta_{\mathbb{H}}-9/4\right)(-\Delta_{\mathbb{H}}+\alpha)}
\;u\right|^{2}dV
\end{split}
\end{equation}
and set
\[
v=\sqrt{\left(-\Delta_{\mathbb{H}}-9/4\right)(-\Delta_{\mathbb{H}}+\alpha)}\;
u=\sqrt{-\Delta_{\mathbb{H}}-9/4}(\sqrt{-\Delta_{\mathbb{H}}+\alpha}\;u).
\]
Then
\begin{equation}\label{b5.1}
\begin{split}
\int_{\mathbb{B}^{4}}|v|^{2}dV=\int_{\mathbb{B}^{4}}\left|\sqrt{-\Delta_{\mathbb{H}}-9/4}(\sqrt{-\Delta_{\mathbb{H}}+\alpha}\;u)\right|^{2}dV\leq1.
\end{split}
\end{equation}
Furthermore, by
(\ref{2.2}),  we can write $u$ as a potential
\[
u=(v\ast(-\Delta_{\mathbb{H}}+\alpha)^{-^{\frac{1}{2}}})\ast(-\Delta_{\mathbb{H}}-9/4)^{-\frac{1}{2}}=v\ast((-\Delta_{\mathbb{H}}+\alpha)^{-^{\frac{1}{2}}}\ast(-\Delta_{\mathbb{H}}-9/4)^{-\frac{1}{2}}).
\]
Let
$\varphi_{1}=(-\Delta_{\mathbb{H}}-9/4)^{-\frac{1}{2}}\ast(-\Delta_{\mathbb{H}}+\alpha)^{-\frac{1}{2}}$. Then $u=v\ast\varphi_{1}$.
By (\ref{b5.2}) and O'Neil's lemma,
\begin{equation*}
\begin{split}
&\int_{\Omega(u)}e^{32\pi^{2} u^{2}}dV=\int^{|\Omega(u)|}_{0}\exp(32\pi^{2}|u^{\ast}(t)|^{2})dt\leq\int^{\Omega_{0}}_{0}\exp(32\pi^{2}|u^{\ast}(t)|^{2})dt\\
\leq&\int^{\Omega_{0}}_{0}\exp\left(32\pi^{2}\left|\frac{1}{t}\int^{t}_{0}v^{\ast}(s)ds\int^{t}_{0}\varphi_{1}^{\ast}(s)ds+
\int^{\infty}_{t}v^{\ast}(s)\varphi_{1}^{\ast}(s)ds\right|^{2}\right)dt
\\
=&\Omega_{0}\int^{\infty}_{0}\exp\left(-t+32\pi^{2}\left|\frac{1}{\Omega_{0}e^{-t}}\int^{\Omega_{0}e^{-t}}_{0}v^{\ast}(s)ds\int^{\Omega_{0}e^{-t}}_{0}\varphi_{1}^{\ast}(s)ds+
\int^{\infty}_{\Omega_{0}e^{-t}}v^{\ast}(s)\varphi_{1}^{\ast}(s)ds\right|^{2}\right)dt.
\end{split}
\end{equation*}
To get the last equation, we use the substitution $t:=\Omega_{0}e^{-t}$.
Next,  we change the variables
\begin{equation*}
\begin{split}
\psi(t)=&\sqrt{\Omega_{0}e^{-t}}v^{\ast}(\Omega_{0}e^{-t});\\
\varphi(t)=&\sqrt{32\pi^{2}\Omega_{0}e^{-t}}\varphi_{1}^{\ast}(\Omega_{0}e^{-t}).
\end{split}
\end{equation*}
It is easy to check
\begin{equation*}
\begin{split}
\int^{\infty}_{t}e^{-s/2}\psi(s)ds\int^{\infty}_{t}e^{-s/2}\varphi(s)ds=&\frac{\sqrt{32\pi^{2}}}{\Omega_{0}}
\int^{\Omega_{0}e^{-t}}_{0}v^{\ast}(s)ds\int^{\Omega_{0}e^{-t}}_{0}\varphi_{1}^{\ast}(s)ds;\\
\int^{t}_{-\infty}\psi(s)\varphi(s)ds=&\sqrt{32\pi^{2}}\int^{\infty}_{\Omega_{0}e^{-t}}v^{\ast}(s)\phi^{\ast}(s)ds.
\end{split}
\end{equation*}
Therefore,
\begin{equation*}
\begin{split}
&\int_{\Omega(u)}e^{32\pi^{2} u^{2}}dV\leq\int^{\Omega_{0}}_{0}\exp(32\pi^{2}|u^{\ast}(t)|^{2})dt\\
=&\Omega_{0}\int^{\infty}_{0}\exp\left(-t+\left(e^{t}\int^{\infty}_{t}e^{-s/2}\psi(s)ds\int^{\infty}_{t}e^{-s/2}\varphi(s)ds+\int^{t}_{-\infty}\psi(s)\varphi(s)ds\right)^{2}\right)dt\\
=&\Omega_{0}\int^{\infty}_{0}e^{-F(t)}dt,
\end{split}
\end{equation*}
where
\[
F(t)=t-\left(e^{t}\int^{\infty}_{t}e^{-s/2}\psi(s)ds\int^{\infty}_{t}e^{-s/2}\varphi(s)ds+\int^{t}_{-\infty}\psi(s)\varphi(s)ds\right)^{2}.
\]
Set
\begin{equation}\label{b5.8}
\begin{split}
a(s,t)=\left\{
         \begin{array}{ll}
           \varphi(s), & \hbox{$s<t$;} \\
           e^{t}(\int^{\infty}_{t}e^{-r/2}\varphi(r)dr)e^{-s/2}, & \hbox{$s>t$.}
         \end{array}
       \right.
\end{split}
\end{equation}
Then
\[
F(t)=t-\left(\int^{\infty}_{-\infty}a(s,t)\psi(s)ds\right)^{2}.
\]
To complete the proof, we need to show that there exists a constant $C$ which is independent of  $\psi$ such that
\[
\int^{\infty}_{0}e^{-F(t)}dt<C.
\]
This will be done in the following Lemma \ref{lm5.1}. The proof of Theorem \ref{th1.1} is thereby completed.

\begin{lemma}\label{lm5.1}
Let $\psi(s)$, $a(s,t)$ and $F(t)$ be as above. Then there a constant $C_{6}$ which is independent of  $\psi$ such that
$\int^{\infty}_{0}e^{-F(t)}dt<C_{6}.$
\end{lemma}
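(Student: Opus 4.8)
The plan is to reduce Lemma \ref{lm5.1} to the classical Adams-type exponential integral estimate (the Adams--Garsia lemma), exactly as it appears in Adams' original work and in the Lam--Lu approach. The key structural facts we have at our disposal are: $\int_{-\infty}^{\infty}\psi(s)^2\,ds\le 1$ (this follows from $\int_{\mathbb{B}^4}|v|^2dV\le1$ after the change of variables $t\mapsto\Omega_0e^{-t}$, since $\int_{\mathbb{R}}\psi(t)^2dt=\int_0^{\Omega_0}(v^\ast(s))^2ds\le\int_{\mathbb{B}^4}|v|^2dV$), together with the pointwise/integral bounds on $\varphi$ coming from Lemma \ref{lm4.3}. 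Indeed $\varphi(t)=\sqrt{32\pi^2\,\Omega_0e^{-t}}\,\varphi_1^\ast(\Omega_0e^{-t})$, so estimate \eqref{4.7} gives, for $\Omega_0e^{-t}>2$ (i.e. $t$ bounded above), $\varphi(t)\lesssim \sqrt{32\pi^2}\cdot\frac{1}{|\ln(\Omega_0e^{-t})|}\lesssim \frac{1}{|t|+1}$ as $t\to-\infty$, while \eqref{4.6} controls $\varphi$ near $t=0$. The decisive quantity is $e^t\int_t^\infty e^{-s/2}\varphi(s)\,ds$; using $\varphi_1^\ast(s)\le\frac{1}{4\sqrt2\,\pi\sqrt s}(1+A_1s^{1/4})$ from \eqref{4.6} one computes $e^{-s/2}\varphi(s)=\sqrt{32\pi^2}\,e^{-s/2}\sqrt{\Omega_0}e^{-s/2}\varphi_1^\ast(\Omega_0e^{-s})\le \sqrt{32\pi^2}\cdot\frac{\sqrt{\Omega_0}}{4\sqrt2\pi}\cdot\frac{e^{-s}}{\sqrt{\Omega_0}e^{-s/2}}(1+A_1(\Omega_0e^{-s})^{1/4})\lesssim e^{-s/2}(1+e^{-s/4})$, hence $e^t\int_t^\infty e^{-s/2}\varphi(s)\,ds\lesssim e^{t}\cdot e^{-t/2}=e^{t/2}\to0$ uniformly. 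The point of the normalization constant $32\pi^2$ is precisely that $\int_t^\infty a(s,t)^2\,ds\le 1+o(1)$ as $t\to\infty$; this is where the sharp Adams constant $\beta_0(2,4)=32\pi^2$ enters, via the sharp constant $\frac1{4\sqrt2\pi}$ in Lemma \ref{lm4.1}/\eqref{4.6}, since $32\pi^2\cdot\big(\frac{1}{4\sqrt2\pi}\big)^2\cdot\int_0^{1}\cdots$ must come out to $1$.

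Concretely I would verify the three hypotheses of the Adams--Garsia lemma (in the form used by Lam and Lu, \cite{ll,l}): writing $E=\{t:\,\psi(t)\ge 0\}$ is irrelevant, the needed conditions are (i) $a(s,t)\le 1$ for $s>t$ with $t\ge T_0$ for some fixed $T_0$, and more importantly (ii) $\sup_{t}\big(\int_{-\infty}^t a(s,t)^2\,ds+\int_t^\infty a(s,t)^2\,ds\big)\le 1+\delta(t)$ with $\int^\infty\delta(t)^{1/2}\cdot(\text{something summable})<\infty$, or rather the clean sufficient condition $\int_t^\infty a(s,t)^2\,ds\le 1$ and $\int_{-\infty}^t|a(s,t)|^{2}\,ds\le \kappa$ uniformly. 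From \eqref{b5.8}, $\int_t^\infty a(s,t)^2ds=e^{2t}\big(\int_t^\infty e^{-s/2}\varphi(s)ds\big)^2\int_t^\infty e^{-s}ds=e^{t}\big(\int_t^\infty e^{-s/2}\varphi(s)ds\big)^2$, and by the computation above this is $\lesssim e^t\cdot e^{-t}=O(1)$, with the sharp version giving $\le1$ up to an error that decays like $e^{-ct}$. Similarly $\int_{-\infty}^t a(s,t)^2ds=\int_{-\infty}^t\varphi(s)^2ds$, which by \eqref{4.6}–\eqref{4.7} converges and is bounded uniformly in $t\le 0$ and grows at most like a constant for $t\ge0$ — in fact the tail estimate $\varphi(s)\lesssim 1/(|s|+1)$ for $s\to-\infty$ makes $\int_{-\infty}^0\varphi(s)^2ds<\infty$ while for $s\in(0,\infty)$ one uses $\varphi_1^\ast(\Omega_0e^{-s})$ with argument near $0$. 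Feeding these into the Adams--Garsia lemma yields $\int_0^\infty e^{-F(t)}dt\le C_6$ with $C_6$ depending only on the absolute constants $A_1,A_2,\Omega_0,\alpha$ but not on $\psi$.

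The main obstacle I expect is bookkeeping the \emph{sharp} constant: one must check that the leading-order behaviour of $\int_t^\infty a(s,t)^2\,ds$ is genuinely $\le 1$ and not merely $\le C$, since the Adams--Garsia lemma requires the $L^2$-norm of the kernel slice to be controlled by $1$ (not an arbitrary constant) in order to conclude finiteness of $\int e^{-F}$; a constant larger than $1$ there would make $F(t)$ eventually negative and the integral divergent. This forces one to track the exact constant $\frac{1}{4\sqrt2\pi}$ through \eqref{4.6} and confirm $32\pi^2\cdot\frac{1}{32\pi^2}=1$; the auxiliary $A_1t^{1/4}$, $A_2\sqrt t$ terms and the $1/(\sqrt t\ln t)$ tail from \eqref{4.7} must all be shown to contribute only lower-order (integrable, decaying) corrections $\delta(t)$ satisfying the summability condition in the lemma. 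A secondary technical point is handling the regime $\Omega_0 e^{-t}\le 2$ versus $>2$ separately (the two estimates in Lemma \ref{lm4.3} apply on complementary ranges), but since $\Omega_0$ is a fixed absolute constant this only affects a bounded interval of $t$ and is absorbed into $C_6$. Once these constant-chasing steps are done, the conclusion $\int_0^\infty e^{-F(t)}\,dt<C_6$ is immediate from the cited lemma, completing the proof of Lemma \ref{lm5.1} and hence of Theorem \ref{th1.1}.
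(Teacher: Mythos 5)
Your overall plan — reduce to an Adams–Garsia type exponential lemma via the change of variables and the kernel bound on $\varphi$ — is the same framework the paper uses. But the statement of the hypotheses you propose to verify is off in a way that matters, and one of your supporting computations is wrong.

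First, you propose to verify (as a ``clean sufficient condition'') that $\int_{-\infty}^{t}a(s,t)^{2}\,ds\le\kappa$ uniformly and $\int_{t}^{\infty}a(s,t)^{2}\,ds\le1$. Neither is what actually holds. Since $\varphi(s)\le 1+A_{1}\sqrt[4]{\Omega_{0}}\,e^{-s/4}$ and $\varphi(s)\to1$ (not $0$) as $s\to\infty$, the middle piece grows: $\int_{0}^{t}\varphi(s)^{2}\,ds= t+O(1)$. Meanwhile for the tail, $\int_{t}^{\infty}a(s,t)^{2}\,ds=e^{t}\bigl(\int_{t}^{\infty}e^{-s/2}\varphi(s)\,ds\bigr)^{2}\to 4$, not $\le1$ — and your claim that $e^{t}\int_{t}^{\infty}e^{-s/2}\varphi(s)\,ds\lesssim e^{t/2}\to0$ is a sign error; $e^{t/2}$ diverges, and the correct statement is that the squared quantity $e^{t}\bigl(\int_{t}^{\infty}e^{-s/2}\varphi\bigr)^{2}$ tends to a finite constant. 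The correct key inequality is therefore $\int_{-\infty}^{\infty}a(s,t)^{2}\,ds\le t+c$, with coefficient exactly $1$ in front of $t$; that coefficient being exactly $1$ (not $1+\epsilon$) is precisely where the normalization $32\pi^{2}\cdot\bigl(\tfrac1{4\sqrt2\pi}\bigr)^{2}=1$ enters. You do write this identity down, but you attach it to the wrong integral (the tail, rather than the $[0,t]$ piece where $\varphi\to1$).

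Second, the linear growth $\int a^{2}\,ds\le t+c$ together with Cauchy–Schwarz only yields the lower bound $F(t)\ge-c$; this is not enough for $\int_{0}^{\infty}e^{-F(t)}\,dt<\infty$. The harder half is the distribution estimate $|E_{\lambda}|\le B_{1}|\lambda|+B_{2}$ for $E_{\lambda}=\{t\ge0:F(t)\le\lambda\}$, obtained by taking $t_{1}<t_{2}$ in $E_{\lambda}$, decomposing $\int a(s,t_{2})\psi(s)\,ds$ over $(-\infty,t_{1})$, $(t_{1},t_{2})$, $(t_{2},\infty)$, and using the remaining tail $L(t_{1})=\bigl(\int_{t_{1}}^{\infty}\psi^{2}\bigr)^{1/2}$ as in Adams. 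Your proposal invokes a black-box lemma that would package this, but a clean lemma with hypothesis $a(s,t)\le1$ for $0<s<t$ does not literally apply here — $\varphi(s)>1$ near $s=0$ — so one must carry the $A_{1}\sqrt[4]{\Omega_{0}}\,e^{-s/4}$ correction through both steps, which is exactly what the paper's proof does. You would need to either cite a version of the Garsia-type lemma explicitly allowing $a(s,t)\le1+\eta(s)$ with $\eta\in L^{1}\cap L^{2}$, or, as the paper does, reprove parts (a) and (b) directly.

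Your identification of the structural ingredients ($\|\psi\|_{2}\le1$, the symmetrization bound on $\varphi_{1}^{\ast}$, the role of the sharp constant) is sound, so with the above corrections the argument would go through; but as written the verification is not correct.
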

\begin{proof}
The proof is similar to that in Adams' paper \cite{ad}. Notice that
\[
\int^{\infty}_{0}e^{-F(t)}dt=\int^{\infty}_{-\infty}|E_{\lambda}|e^{-\lambda}d\lambda,
\]
where $E_{\lambda}=\{t\geq0: F(t)\leq\lambda\}$ and $|E_{\lambda}|$ is the Lebesgue measure of $E_{\lambda}$. It is enough to show the following two facts:

(a) There exists a constant $c\geq0$ which is independent of $\psi$ such that $\inf\limits_{t\geq0} F(t)\geq -c;$

(b) There exist constants $B_{1}$ and $B_{2}$ which are both independent of $\psi$ such that $|E_{\lambda}|\leq B_{1}|\lambda|+B_{2}$.

Firstly, we prove (a). Without loss of generality, we assume $\Omega_{0}>2$ so that $\ln\frac{\Omega_{0}}{2}>0$.  By Lemma \ref{lm4.3},
\begin{equation}\label{c5.1}
\begin{split}
\varphi(t)=&\sqrt{32\pi^{2}\Omega_{0}e^{-t}}\varphi_{1}^{\ast}(\Omega_{0}e^{-t})\leq1+A_{1}\sqrt[4]{\Omega_{0}}e^{-\frac{t}{4}},\;\;\;\; t\in\mathbb{R};\\
\varphi(t)=&\sqrt{32\pi^{2}\Omega_{0}e^{-t}}\varphi_{1}^{\ast}(\Omega_{0}e^{-t})\lesssim\frac{1}{\ln\Omega_{0}-t}\leq \frac{C}{1-t},\;\;\;\; t\leq\ln\frac{\Omega_{0}}{2},
\end{split}
\end{equation}
where $C>0$ is a constant which is independent of $\psi$. Therefore, by (\ref{b5.8}),  for $t\geq0$,
\begin{equation*}
\begin{split}
\int^{\infty}_{-\infty}a(s,t)^{2}ds=&\int^{0}_{-\infty}a(s,t)^{2}ds+\int^{t}_{0}a(s,t)^{2}ds+\int^{\infty}_{t}a(s,t)^{2}ds\\
=&\int^{0}_{-\infty}\varphi^{2}(s)ds+\int^{t}_{0}\varphi^{2}(s)ds+ e^{2t}\left(\int^{\infty}_{t}e^{-r/2}\varphi(r)dr\right)^{2}\int^{\infty}_{t}e^{-s}ds\\
\leq&C\int^{0}_{-\infty}\frac{1}{(1-s)^{2}}ds+\int^{t}_{0}(1+A_{1}\sqrt[4]{\Omega_{0}}e^{-\frac{s}{4}})^{2}ds+\\
&e^{t}\left(\int^{\infty}_{t}e^{-r/2}(1+A_{1}\sqrt[4]{\Omega_{0}}e^{-\frac{r}{4}})dr\right)^{2}\\
\leq&t+c,
\end{split}
\end{equation*}
where $c>0$ is independent of $\psi$. Thus, by Cauchy-Schwarz inequality,
\begin{equation*}
\begin{split}
F(t)=&t-\left(\int^{\infty}_{-\infty}a(s,t)\psi(s)ds\right)^{2}\\
\geq& t-\int^{\infty}_{-\infty}a(s,t)^{2}ds\int^{\infty}_{-\infty}\psi^{2}(s)ds\\
=&t-\int^{\infty}_{-\infty}a(s,t)^{2}ds\int^{\infty}_{-\infty}\Omega_{0}e^{-t}|v^{\ast}(\Omega_{0}e^{-t})|^{2}ds\\
\geq&t-(t+c)\int_{\mathbb{B}^{4}}|v|^{2}dV\\
=& t-(t+c)=-c.
\end{split}
\end{equation*}

Secondly, we prove (b). Let $R>0$ and suppose $E_{\lambda}\cap[R,\infty)\neq{\o}$. Take $t_{1}, t_{2}\in E_{\lambda}\cap[R,\infty)\neq{\o}$, $t_{1}<t_{2}$. Then
\begin{equation}\label{c5.3}
\begin{split}
t_{2}-\lambda\leq&\left(\int^{\infty}_{-\infty}a(s,t_{2})\psi(s)dt\right)^{2}\\
=&\left(\int^{t_{1}}_{-\infty}a(s,t_{2})\psi(s)ds+\int^{t_{2}}_{t_{1}}a(s,t_{2})\psi(s)ds
+\int^{\infty}_{t_{2}}a(s,t_{2})\psi(s)ds\right)^{2}\\
=&\left(\int^{t_{1}}_{-\infty}\varphi(s)\psi(s)ds+\int^{t_{2}}_{t_{1}}\varphi(s)\psi(s)ds
+\int^{\infty}_{t_{2}}a(s,t_{2})\psi(s)ds\right)^{2}.
\end{split}
\end{equation}

Set $L(t)=(\int^{\infty}_{t}\psi^{2}(s)ds)^{\frac{1}{2}}$. Then
\[L(t)\leq\left(\int^{\infty}_{\infty}\psi^{2}(s)ds\right)^{\frac{1}{2}}=\left(\int^{\infty}_{-\infty}\Omega_{0}e^{-t}|v^{\ast}(\Omega_{0}e^{-t})|^{2}ds\right)^{\frac{1}{2}}
=\left(\int_{\mathbb{B}^{4}}|v|^{2}dV\right)^{\frac{1}{2}}\leq1\]
Therefore, by (\ref{c5.1}) and Cauchy-Schwarz inequality,
\begin{equation}\label{c5.4}
\begin{split}
\left(\int^{t_{1}}_{-\infty}\varphi(s)\psi(s)ds\right)^{2}\leq&\int^{t_{1}}_{-\infty}\varphi^{2}(s)ds\cdot \int^{\infty}_{\infty}\psi^{2}(s)ds\\
\leq&\int^{t_{1}}_{-\infty}\varphi^{2}(s)ds=\int^{0}_{-\infty}\varphi^{2}(s)ds+\int^{t_{1}}_{0}\varphi^{2}(s)ds\\
\leq& C\int^{0}_{-\infty}\frac{1}{(1-s)^{2}}ds+\int^{t_{1}}_{0}(1+A_{1}\sqrt[4]{\Omega_{0}}e^{-\frac{s}{4}})^{2}ds\\
\leq&t_{1}+b_{1};
\end{split}
\end{equation}
\begin{equation}\label{1c5.4}
\begin{split}
\left(\int^{t_{2}}_{t_{1}}\varphi(s)\psi(s)ds\right)^{2}\leq&\int^{t_{2}}_{t_{1}}\varphi^{2}(s)ds\cdot L^{2}(t_{1})\\
\leq&\int^{t_{2}}_{t_{1}}(1+A_{1}\sqrt[4]{\Omega_{0}}e^{-\frac{s}{4}})^{2}ds\cdot L^{2}(t_{1})\\
\leq& (t_{2}-t_{1}+b_{2})L^{2}(t_{1});
\end{split}
\end{equation}
\begin{equation}\label{2c5.4}
\begin{split}
\left(\int^{\infty}_{t_{2}}a(s,t_{2})\psi(s)ds\right)^{2}\leq&\int^{\infty}_{t_{2}}a(s,t_{2})^{2}ds \cdot L^{2}(t_{2})\leq\int^{\infty}_{t_{2}}a(s,t_{2})^{2}ds\cdot L^{2}(t_{1})\\
=&e^{2t_{2}}\left(\int^{\infty}_{t_{2}}e^{-r/2}\varphi(r)dr\right)^{2}\int^{\infty}_{t_{2}}e^{-s}ds\cdot L^{2}(t_{1})\\
\leq&e^{t_{2}}\left(\int^{\infty}_{t_{2}}e^{-r/2}((1+A_{1}\sqrt[4]{\Omega_{0}}e^{-\frac{r}{4}})dr\right)^{2}\cdot L^{2}(t_{1})\\
\leq&b_{3}L^{2}(t_{1}),
\end{split}
\end{equation}
where $b_{1},b_{2},b_{3}$ are constants independent of $t_{1}$ and $t_{2}$.
Combing (\ref{c5.3}) and (\ref{c5.4})-(\ref{2c5.4}) yields
\begin{equation*}
\begin{split}
t_{2}-\lambda\leq&\left\{(t_{1}+b_{1})^{\frac{1}{2}}+[(t_{2}-t_{1}+b_{2})^{\frac{1}{2}}+b_{3}]L(t_{1})\right\}^{2}\\
\leq&\left\{(t_{1}+b_{1})^{\frac{1}{2}}+[(t_{2}-t_{1})^{\frac{1}{2}}+b_{4}]L(t_{1})\right\}^{2},
\end{split}
\end{equation*}
where $b_{4}$ is a constant independent of $t_{1}$ and $t_{2}$.
The rest of the proof is the same as that in \cite{ad} and thus the proof is completed.
\end{proof}

\textbf{Proof of Theorem \ref{th1.6}} Let $u\in
C^{\infty}_{0}(\mathbb{B}^{4})$ be such that
\[
\int_{\mathbb{B}^{4}}|\Delta
u|^{2}dx-\lambda\int_{\mathbb{B}^{4}}\frac{u^{2}}{(1-|x|^{2})^{4}}dx\leq1.
\]
Then
\begin{equation*}
\begin{split}
(9-\lambda)\int_{\mathbb{B}^{4}}\frac{u^{2}}{(1-|x|^{2})^{4}}dx\leq&\int_{\mathbb{B}^{4}}|\Delta
u|^{2}dx-\lambda\int_{\mathbb{B}^{4}}\frac{u^{2}}{(1-|x|^{2})^{4}}dx\leq1.
\end{split}
\end{equation*}
Therefore, by Corollary \ref{c2},
\begin{equation*}
\begin{split}
\int_{\mathbb{B}^{4}}(e^{32\pi^{2}
u^{2}}-1)dV
=&16\int_{\mathbb{B}^{4}}\frac{e^{32\pi^{2}
u^{2}}-1-32\pi^{2}u^{2}}{(1-|x|^{2})^{4}}dx+16\cdot32\pi^{2}\int_{\mathbb{B}^{4}}\frac{u^{2}}{(1-|x|^{2})^{4}}dx\\
\leq& C_{1}+\frac{16\cdot32\pi^{2}}{9-\lambda}.
\end{split}
\end{equation*}
The desired results follows.
\\

Before the proof of Theorem 1.9, we need the following improved
Hardy inequality.
\begin{lemma}
There exists a constant $C_{7}>0$ such that for all $u\in
C^{\infty}_{0}(\mathbb{B}^{4})$,
\[
\int_{\mathbb{B}^{4}}|\Delta
u|^{2}dx-9\int_{\mathbb{B}^{4}}\frac{u^{2}}{(1-|x|^{2})^{4}}dx\geq
C_{7} \int_{\mathbb{B}^{4}}u^{2}dx.
\]
\end{lemma}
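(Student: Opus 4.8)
The plan is to dominate the fourth-order Hardy gap $\int_{\mathbb{B}^{4}}|\Delta u|^{2}dx-9\int_{\mathbb{B}^{4}}\frac{u^{2}}{(1-|x|^{2})^{4}}dx$ below by a fixed multiple of the second-order Hardy gap $\int_{\mathbb{B}^{4}}|\nabla_{\mathbb{H}}u|^{2}dV-\frac{9}{4}\int_{\mathbb{B}^{4}}u^{2}dV$, and then to close the estimate with the Hardy--Sobolev inequality (\ref{5.2}) together with a weighted Cauchy--Schwarz inequality that trades the hyperbolic $L^{4}$-norm for the Euclidean $L^{2}$-norm.

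First I would pass to the Fourier side on $\mathbb{B}^{4}$. Since $-\Delta_{\mathbb{H}}$ acts there as the multiplier $\lambda\mapsto\frac{9+\lambda^{2}}{4}$, the Plancherel formula together with (\ref{b1.1}) and the identity $9\int_{\mathbb{B}^{4}}\frac{u^{2}}{(1-|x|^{2})^{4}}dx=\frac{9}{16}\int_{\mathbb{B}^{4}}u^{2}dV$ gives
\[
\int_{\mathbb{B}^{4}}|\Delta u|^{2}dx-9\int_{\mathbb{B}^{4}}\frac{u^{2}}{(1-|x|^{2})^{4}}dx=D_{4}\int_{-\infty}^{\infty}\int_{\mathbb{S}^{3}}\frac{\lambda^{2}(\lambda^{2}+10)}{16}\,|\widehat{u}(\lambda,\zeta)|^{2}\,|\mathfrak{c}(\lambda)|^{-2}\,d\lambda\,d\sigma,
\]
the symbol $\frac{\lambda^{2}(\lambda^{2}+10)}{16}=\big(\frac{9+\lambda^{2}}{4}-\frac{9}{4}\big)\big(\frac{9+\lambda^{2}}{4}+\frac{1}{4}\big)$ being exactly that of $(-\Delta_{\mathbb{H}}-\frac{9}{4})(-\Delta_{\mathbb{H}}+\frac{1}{4})$, in accordance with Theorem \ref{c2}. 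On the other hand $\int_{\mathbb{B}^{4}}|\nabla_{\mathbb{H}}u|^{2}dV-\frac{9}{4}\int_{\mathbb{B}^{4}}u^{2}dV=D_{4}\int_{-\infty}^{\infty}\int_{\mathbb{S}^{3}}\frac{\lambda^{2}}{4}\,|\widehat{u}(\lambda,\zeta)|^{2}\,|\mathfrak{c}(\lambda)|^{-2}\,d\lambda\,d\sigma$; since $\frac{\lambda^{2}(\lambda^{2}+10)}{16}\ge\frac{5}{2}\cdot\frac{\lambda^{2}}{4}$ for all real $\lambda$, we obtain
\[
\int_{\mathbb{B}^{4}}|\Delta u|^{2}dx-9\int_{\mathbb{B}^{4}}\frac{u^{2}}{(1-|x|^{2})^{4}}dx\ \ge\ \frac{5}{2}\left(\int_{\mathbb{B}^{4}}|\nabla_{\mathbb{H}}u|^{2}dV-\frac{9}{4}\int_{\mathbb{B}^{4}}u^{2}dV\right)
\]
(equivalently, this is the first inequality in (\ref{5.1}) with $\alpha=\frac{1}{4}$).

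Next I would apply the Hardy--Sobolev inequality (\ref{5.2}) to bound the right-hand side below by $\frac{5}{2}C_{4}\left(\int_{\mathbb{B}^{4}}|u|^{4}dV\right)^{1/2}$, and then convert this to the Euclidean $L^{2}$-norm: writing $dx=\frac{(1-|x|^{2})^{4}}{16}dV$ and using the Cauchy--Schwarz inequality on $(\mathbb{B}^{4},dV)$,
\[
\int_{\mathbb{B}^{4}}u^{2}\,dx=\int_{\mathbb{B}^{4}}u^{2}\cdot\frac{(1-|x|^{2})^{4}}{16}\,dV\le\left(\int_{\mathbb{B}^{4}}|u|^{4}dV\right)^{1/2}\left(\int_{\mathbb{B}^{4}}\frac{(1-|x|^{2})^{8}}{256}\,dV\right)^{1/2}=\sqrt{M}\left(\int_{\mathbb{B}^{4}}|u|^{4}dV\right)^{1/2},
\]
where $M=\frac{1}{16}\int_{\mathbb{B}^{4}}(1-|x|^{2})^{4}dx<\infty$. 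Combining the three displays gives the assertion with $C_{7}=\frac{5C_{4}}{2\sqrt{M}}$.

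The only point requiring care is the bookkeeping in the first step: verifying that the Fourier symbol of the fourth-order Hardy gap is exactly $\frac{\lambda^{2}(\lambda^{2}+10)}{16}$, a nonnegative quantity that vanishes to second order at the bottom $\lambda=0$ of the $L^{2}$-spectrum --- this reflects the sharpness of the constant $9$ in $\int_{\mathbb{B}^{4}}|\Delta u|^{2}dx\ge 9\int_{\mathbb{B}^{4}}\frac{u^{2}}{(1-|x|^{2})^{4}}dx$. Once this reduction to the second-order gap is in hand, the remaining steps are the routine combination of (\ref{5.2}) with the weighted Cauchy--Schwarz estimate, and I do not anticipate any genuine difficulty.
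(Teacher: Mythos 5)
Your proof is correct, and its first step (passing to the Fourier side and bounding the fourth-order Hardy gap below by a fixed multiple of the second-order gap $\int_{\mathbb{B}^{4}}|\nabla_{\mathbb{H}}u|^{2}dV-\frac{9}{4}\int_{\mathbb{B}^{4}}u^{2}dV$) is essentially the paper's first step. The second step, however, is genuinely different. The paper substitutes $u=(1-|x|^{2})f$, reduces the second-order gap to the Euclidean Hardy gap $\int_{\mathbb{B}^{4}}|\nabla f|^{2}dx-\int_{\mathbb{B}^{4}}\frac{f^{2}}{(1-|x|^{2})^{2}}dx$, and then imports the Brezis--Marcus improved Hardy inequality to obtain a lower bound $\gtrsim\int_{\mathbb{B}^{4}}f^{2}dx\geq\int_{\mathbb{B}^{4}}u^{2}dx$. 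You instead bound the second-order gap below by the hyperbolic $L^{4}$-norm via the Hardy--Sobolev inequality (\ref{5.2}), a tool already used in the proof of Theorem \ref{th1.1}, and then pass to the Euclidean $L^{2}$-norm by Cauchy--Schwarz, exploiting the integrability of $(1-|x|^{2})^{4}$ over the ball. Your route is slightly more economical in that it avoids the external Brezis--Marcus citation and reuses machinery already present. One incidental observation: your Fourier-symbol bookkeeping is in fact more careful than the paper's own. The correct factorization of the fourth-order gap is $\left(-\Delta_{\mathbb{H}}-\frac{9}{4}\right)\left(-\Delta_{\mathbb{H}}+\frac{1}{4}\right)$, with symbol $\frac{\lambda^{2}(\lambda^{2}+10)}{16}$, consistent with the choice $\alpha=\frac{1}{4}$ in Theorem \ref{c2}; the paper's displayed proof of this lemma instead writes $\left(-\Delta_{\mathbb{H}}-\frac{9}{4}\right)\left(-\Delta_{\mathbb{H}}+\frac{1}{2}\right)$, a harmless typo that does not affect the conclusion since either value yields a positive constant multiplying the second-order gap.
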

\begin{proof} It is enough to show
\[\int_{\mathbb{B}^{4}}|\nabla_{\mathbb{H}}u|^{2}dV-\frac{9}{4}\int_{\mathbb{B}^{4}}u^{2}dV\geq
C_{8} \int_{\mathbb{B}^{4}}u^{2}dx,\] since, by Plancherel formula,
\begin{equation*}
\begin{split}
&\int_{\mathbb{B}^{4}}|\Delta
u|^{2}dx-9\int_{\mathbb{B}^{4}}\frac{u^{2}}{(1-|x|^{2})^{4}}dx\\
=&\int_{\mathbb{B}^{4}}\left(-\Delta_{\mathbb{H}}-9/4\right)(-\Delta_{\mathbb{H}}+1/2)u\cdot
udV\\
=&D_{4}\int^{+\infty}_{-\infty}\int_{\mathbb{S}^{n-1}}\frac{\lambda^{2}}{4}\cdot\left(\frac{9+\lambda^{2}}{4}+\frac{1}{2}\right)|\widehat{f}(\lambda,\zeta)|^{2}|\mathfrak{c}(\lambda)|^{-2}d\lambda d\sigma(\varsigma)\\
\geq&\left(\frac{9}{4}+\frac{1}{2}\right)D_{4}\int^{+\infty}_{-\infty}\int_{\mathbb{S}^{n-1}}\frac{\lambda^{2}}{4}|\widehat{f}(\lambda,\zeta)|^{2}|\mathfrak{c}(\lambda)|^{-2}d\lambda d\sigma(\varsigma)\\
=&\left(\frac{9}{4}+\frac{1}{2}\right)\int_{\mathbb{B}^{4}}\left(-\Delta_{\mathbb{H}}-9/4\right)u\cdot
udV\\
=&\left(\frac{9}{4}+\frac{1}{2}\right)\left(\int_{\mathbb{B}^{4}}|\nabla_{\mathbb{H}}u|^{2}dV-\frac{9}{4}\int_{\mathbb{B}^{4}}|u|^{2}dV\right).
\end{split}
\end{equation*}

Set $u=(1-|x|^{2})f$. Then $f\in C^{\infty}_{0}(\mathbb{B}^{4})$ and
\[
|\nabla u|^{2}=(1-|x|^{2})^{2}|\nabla
f|^{2}+\frac{1}{2}\langle\nabla f^{2},\nabla
(1-|x|^{2})^{2}\rangle+f^{2} \cdot4|x|^{2}.
\]
Therefore,
\begin{equation*}
\begin{split}
\int_{\mathbb{B}^{4}}\frac{|\nabla
u|^{2}}{(1-|x|^{2})^{2}}dx=&\int_{\mathbb{B}^{4}}|\nabla
f|^{2}dx+\int_{\mathbb{B}^{4}}f^{2}\frac{4|x|^{2}}{(1-|x|^{2})^{2}}dx-\frac{1}{2}\int_{\mathbb{B}^{4}}f^{2}\Delta\ln(1-|x|^{2})^{2}dx\\
=&\int_{\mathbb{B}^{4}}|\nabla
f|^{2}dx+8\int_{\mathbb{B}^{4}}\frac{f^{2}}{(1-|x|^{2})^{2}}dx.
\end{split}
\end{equation*}
Recall that the improved Hardy inequality (see e.g.\cite{bm,wy})
\[
\int_{\mathbb{B}^{4}}|\nabla
f|^{2}dx-\int_{\mathbb{B}^{4}}\frac{f^{2}}{(1-|x|^{2})^{2}}dx\geq
C_{9}\int_{\mathbb{B}^{4}}f^{2}dx.
\]
We have
\begin{equation*}
\begin{split}
\int_{\mathbb{B}^{4}}|\nabla_{\mathbb{H}}u|^{2}dV-\frac{9}{4}\int_{\mathbb{B}^{4}}u^{2}dV=&4\left(\int_{\mathbb{B}^{4}}\frac{|\nabla
u|^{2}}{(1-|x|^{2})^{2}}dx-9\int_{\mathbb{B}^{4}}\frac{u^{2}}{(1-|x|^{2})^{2}}dx\right)\\
=&4\left(\int_{\mathbb{B}^{4}}|\nabla
f|^{2}dx-\int_{\mathbb{B}^{4}}\frac{f^{2}}{(1-|x|^{2})^{2}}dx\right)\\
\geq&
4C_{9}\int_{\mathbb{B}^{4}}f^{2}dx=4C_{9}\int_{\mathbb{B}^{4}}\frac{u^{2}}{1-|x|^{2}}dx\\
\geq&4C_{9}\int_{\mathbb{B}^{4}}u^{2}dx.
\end{split}
\end{equation*}
This completes the proof.

\end{proof}

\textbf{Proof of Theorem \ref{th1.7}} Let $u\in
C^{\infty}_{0}(\mathbb{B}^{4})$ be such that
\[
\int_{\mathbb{B}^{4}}|\Delta
u|^{2}dx-9\int_{\mathbb{B}^{4}}\frac{u^{2}}{(1-|x|^{2})^{4}}dx\leq1,
\]
 By Corollary 1.5, there exist a positive constant $C_{1}>0$ which is independent of $u$ such that
\[
\int_{\mathbb{B}^{4}}\frac{(e^{32\pi u^{2}}-1-32\pi^{2}
u^{2})}{(1-|x|^{2})^{4}}dx\leq C_{1}.
\]
Therefore, by Lemma 5.2,
\begin{equation*}
\begin{split}
\int_{\mathbb{B}^{4}}e^{32\pi^{2} u^{2}}dx=&\int_{\mathbb{B}^{4}}(e^{32\pi^{2} u^{2}}-1-32\pi^{2} u^{2})dx++\int_{\mathbb{B}^{4}}dx+32\pi^{2}\int_{\mathbb{B}^{4}}u^{2}dx\\
\leq&\int_{\mathbb{B}^{4}}\frac{(e^{32\pi^{2} u^{2}}-1-32\pi^{2} u^{2})}{(1-|x|^{2})^{4}}dx+\int_{\mathbb{B}^{4}}dx+32\pi^{2}\int_{\mathbb{B}^{4}}u^{2}dx\\
\leq& C_{1}+\int_{\mathbb{B}^{4}}dx+32\pi^{2}C^{-1}_{7}.
\end{split}
\end{equation*}
The desired result follows.

\end{document}